\theoremstyle{theorem}
\newtheorem{theorem}{Theorem}[section]
\newtheorem{lemma}{Lemma}[section]
\newtheorem{corollary}{Corollary}[section]
\theoremstyle{definition}
\newtheorem{claim}{Claim}[section]
\newtheorem{conjecture}{Conjecture}[section]
\newtheorem{observation}{Observation}[section]
\begin{document}
\title{\bf The Tur\'an problems of directed paths and cycles in digraphs
\thanks{Supported by NSFC (Nos. 12071370, 11871311, 11631014).}}
\date{}
\author{Wenling Zhou \thanks{School of Mathematics,
Shandong University, Jinan 250100, P.R. China. Email:
\texttt{gracezhou@mail.sdu.edu.cn}.} \and Binlong
Li\thanks{Corresponding author. School of Mathematics and
Statistics, Northwestern Polytechnical University, Xi'an 710072,
P.R. China. Email: \texttt{libinlong@mail.nwpu.edu.cn}.}}
\maketitle

\begin{center}
\begin{minipage}{130mm}
\small\noindent{\bf Abstract:}
Let $\overrightarrow{P_k}$ and $\overrightarrow{C_k}$ denote the
directed path and the directed cycle of order $k$, respectively. In
this paper, we determine the precise maximum size of
$\overrightarrow{P_k}$-free digraphs of order $n$ as well as the
extremal digraphs attaining the maximum size for large $n$. For all
$n$, we also determine the precise maximum size of
$\overrightarrow{C_k}$-free digraphs of order $n$ as well as the
extremal digraphs attaining the maximum size. In addition, Huang and
Lyu [\textit{Discrete Math. 343(5) 2020}] characterized the extremal
digraphs avoiding an orientation of $C_4$. For all other
orientations of $C_4$, we also study the maximum size and the
extremal digraphs avoiding them.


\smallskip
\noindent{\bf Keywords:} Tur\'an number, directed path, directed
cycle, extremal digraph
\end{minipage}
\end{center}

\smallskip

\section{Introduction}

The Tur\'an problem is a classic extremal problem in graph theory,
whose origin can be traced back to Tur\'an's generalization of
Mantel's theorem~\cite{Mantel1907}. How many edges guarantee that a
graph on $n$ vertices has a complete subgraph with $k$ vertices, no
matter how these edges are arranged? The answer is given by
Tur\'an's theorem~\cite{Tu1941} in 1941. Tur\'an's theorem is not
merely one extremal result among others, but also initiated extremal
graph theory. In general, for a given graph (or digraph) $H$, a
graph (or digraph) is called \textit{$H$-free} if it contains no $H$
as a subgraph (or subdigraph). An $H$-free graph (or digraph) on $n$
vertices with the maximum possible number of edges (or arcs) is
called \textit{extremal} for $H$ and $n$; its number of edges (or
arcs) is denoted by ${\rm ex}(n,H)$. 
This quantity is referred to as the \textit{Tur\'an number} for $H$
and $n$. Sometimes, the $H$-free extremal graphs may be not unique,
we define ${\rm EX}(n,H)$ to be the family of all  $H$-free extremal
graphs (or digraphs). The Tur\'an problem focuses on the Tur\'an
number and the extremal graphs (or digraphs) for given $H$. Most
results on classical Tur\'an problem have been obtained for
undirected graphs,
see~\cite{bollobas1996extremal,chung1999upper,MR3822066,sidorenko1995we,jukna2011extremal}.
Relatively few results for directed graphs have been obtained,
see~\cite{brown1973extremal,brown1970extremal,
brown2002extremal,huang1608turan,huang2011digraphs}. In this paper,
we are interested in the Tur\'an problem for directed paths and
directed cycles.

The Tur\'an problem of complete digraphs and tournaments has been
solved in work of Brown and Harary~\cite{brown1970extremal}, and the
extremal digraphs have a interestingly counterpart to the extremal
graphs for complete graphs (see in \cite{brown1970extremal}). They
also studied the Tur\'an problem of some digraphs on at most $4$
vertices where any two vertices are joined by at least one arc.
The study of paths and cycles are also the focus
of the Tur\'an problem.
Erd\H{o}s and Gallai~\cite{erdHos1959maximal} determined the Tur\'an number of paths. Later, Faudree and Schelp~\cite{1975Path} gave a completely characterization of
the extremal graphs for paths. However, it is very difficult to
determine the exact Tur\'an number of the even cycles;
see~\cite{ball2012asymptotic,bukh2017bound,firke2013extremal,
lazebnik1994properties,pikhurko2012note}. Even for $C_4$ (the cycle
of order 4), we only know some asymptotic results. Therefore, it is
interesting to study the Tur\'an problem of a specific orientation
of paths (or cycles). Among all the orientations of paths (or
cycles), the directed path (or directed cycle) is one of the most
natural orientation. In~\cite{1982On, 1983On}, Howalla et al.
determined the maximum size of the digraphs without 2-cycles and
avoiding $k$ directed paths with the same initial vertex and
terminal vertex for $k=2,3$.
By using matrices, Huang and Lyu~\cite{MR4058215} studied the
Tur\'an problem of a specific orientation of $C_4$, denoted by
$\overrightarrow{P_{2,2}}$, which consists of two paths of order 3
with the same initial and terminal vertices.

Inspired by the studied of Brown-Harary and Huang-Lyu, we naturally
want to know if there are interesting relationships between the
extremal structures of directed paths (or directed cycles) and the
extremal structures of paths and cycles. In addition, a natural
important question: is there a kind of orientation of $C_4$ such
that the Tur\'an number is hard as difficult to calculate as $C_4$?
In this paper, we will answer these questions and shed light on the
relationship of extremal structures between graphs and digraphs.


\subsection{Preliminaries}
In order to present our results, we begin with some notation. We also introduce some classical results that will be used in the proofs.

To avoid confusion, we first distinguish some notation between
graphs and digraphs. A (simple) \textit{graph}, is a pair
$G=(V(G),E(G))$, where $V(G)$ is a non-empty finite \textit{vertex}
set and $E(G)$, a set of $2$-element subsets of $V(G)$ is an
\textit{edge} set. A \textit{directed graph, digraph} for short, is
a pair $D=(V(D), A(D))$, where $V(D)$ is also a \textit{vertex} set,
but $A(D)$ is a set of order pairs of vertices, called \textit{arc}
(or \textit{directed edge}) set. The cardinality of $E(G)$ and
$A(D)$ is denoted by $e(G)$ and $a(D)$, respectively. An edge (or
arc) with identical ends is called a \textit{loop}. Two or more
edges with the same pair of ends are said to be \textit{parallel}
edges. If two arcs between a pair of vertices have the same
direction, we say they are \textit{parallel}. In this paper, our
notation depends heavily on the use of letters $G$ for graphs and
$D$ for digraphs, and graphs and digraphs are strict, i.e., they do
not allow loops and parallel edges (arcs). {If a graph contains
parallel edges, then we call it a \textit{multigraph}. Multigraphs
in this paper have maximum edge-multiplicity $2$.} In particular, we
say that $G$ (or $D$) is \textit{complete} if $E(G)=\{\{u,v\}:
u,v\in V(G), u\neq v\}$ (or $A(D)=\{(u,v): u,v\in V(D), u\neq v\}$).
We denote by $K_k$ (or $\overleftrightarrow{K_k}$) the complete
graph (or complete digraph) with $k$ vertices. A subset of $V(G)$
(or $V(D)$) is \textit{independent} in $G$ (or $D$) if no two of its
vertices are connected by an edge (or arc).

An \textit{orientation} of a graph (or multigraph) $G$ is a digraph
obtained from $G$ by assigning a direction to each edge of $G$. {In
this paper, the orientations of multigraphs are strict, i.e., they
have no parallel arcs.} In general, an orientation of a complete
graph $K_k$ is called a \textit{tournament}, denoted by
$\overrightarrow{T_k}$.
A \emph{directed} path (or \emph{directed} cycle) is an orientation
of a path (or cycle) such that the direction of each arc is same to
that of its predecessor. For convenience, we denote by $P_k$ (or
$\overrightarrow{P_k}$) the path (or directed path) and $C_k$ (or
$\overrightarrow{C_k}$) the cycle (or directed cycle) with $k$
vertices. For a digraph $D$, the \textit{underlying multigraph}
{${\rm Umg}(D)$} of $D$ is a multigraph obtained from $D$ by
forgetting the direction on each arc of $D$. The \textit{underlying
graph} {${\rm Ung}(D)$} of $D$ is obtained from ${\rm Umg}(D)$ by
replacing every pair of parallel edges with one. For example, for a
digraph $D$ with vertices $u$, $v$ and arcs $(u,v)$, $(v,u)$, ${\rm
Umg}(D)$ has two parallel edges between $u,v$ and ${\rm Ung}(D)$ has
only one. Accordingly, for a graph $G$, the \textit{double
orientation} $\overleftrightarrow{G}$ of $G$ is a digraph obtained
from $G$ by replacing every edge $\{u,v\}$ of $G$ with two arcs
$(u,v)$ and $(v,u)$. Note that ${\rm Ung}(\overleftrightarrow{G})=G$
and $a(\overleftrightarrow{G})=2e(G)$.

As usual, in graphs we use $uv$ to denote the edge $\{u,v\}$ for
short, and in digraphs we use $uv$ to denote the arc $(u,v)$. For a
digraph $D=(V,A)$ with $u,v\in V$, if $uv\in A$, we
denote it by $u\rightarrow v$. If $uv\notin A$, we denote it by
$u\nrightarrow v$. Furthermore, if $uv\in A$ and $vu\notin A$, we
denote it by $u\mapsto v$.
Analogously, for two subset $U,W\subseteq V$, we write $U\rightarrow
W$ if $u\rightarrow v$ for every $u\in U$ and $v\in W$, and write
$U\nrightarrow W$ if $u\nrightarrow v$ for every $u\in U$ and $v\in
W$. The notation $U\mapsto W$ means that $U\rightarrow W$ and
$W\nrightarrow U$. If $U$ (or $W$) consists of a single vertex $x$,
then we simply write $x\rightarrow W$, $x\nrightarrow W$ and
$x\mapsto W$ (or $U\rightarrow x$, $U\nrightarrow x$, and $U\mapsto
x$).


Let $k\geq 1$ be an integer. A graph $G$ is called
\textit{$k$-partite} if $V(G)$ admits a partition into $k$ classes
such that every edge has its ends in different classes. A
$k$-partite graph is called \textit{complete $k$-partite} if its any
two vertices from different partition sets are adjacent. We denote
by $K_{n_1,n_2,\ldots,n_k}$ the complete $k$-partite graph with
partition sets of size $n_1,n_2,\ldots,n_k$. A partition of a vertex
set is an \textit{almost balanced partition} if its partition sets
differ in size by at most $1$. Thus, a complete graph $K_k$ can be
thought as a complete $k$-partite graph such that each partition set
has exactly one vertex. Denote by $K^s_k$ the complete $k$-partite
graph whose each partition set has exactly $s$ vertices.

For a directed path $\overrightarrow{P}$ and $u,v\in
V(\overrightarrow{P})$, we use $\overrightarrow{P}[u,v]$ to denote
the directed sub-path from $u$ to $v$. For a directed cycle
$\overrightarrow{C}$ and $u,v\in V(\overrightarrow{C})$, we use
$\overrightarrow{C}[u,v]$ to denote the directed path from $u$ to
$v$ along $\overrightarrow{C}$.

We use $G_1\cup G_2$ to denote the \emph{disjoint union} of the two
graphs $G_1,G_2$; and $G_1\vee G_2$ to denote the \emph{join} of
$G_1,G_2$, that is, the graphs with vertex set $V(G_1)\cup V(G_2)$
and edge set $E(G_1)\cup E(G_2)\cup\{uv: u\in V(G_1), v\in
V(G_2)\}$. The disjoint union of $k$ copies of $G$ is denoted by
$kG$, and the \emph{complement} of $G$, denoted by $\overline{G}$,
is the graphs on $V(G)$ such that for two vertices $u,v\in V(G)$,
$uv\in E(\overline{G})$ if and only if $uv\notin E(G)$.

Tur\'an's theorem~\cite{jukna2011extremal} gave the unique
$K_{k+1}$-free extremal graph of order $n$, which we shall denote by
$T_{n,k}$ and call it the \emph{Tur\'an graph.} The Tur\'an graph
$T_{n,k}$ is a complete $k$-partite graph with an almost balanced
partition on $n$ vertices.

\begin{theorem}[Tur\'an \cite{jukna2011extremal}]\label{K_k-free}
Let $n, k\in\mathbb{N}^*$, $n=qk+r,\ 0\leq r<k$.
Then
$${\rm ex}(n,K_{k+1})=e(T_{n,k})=\frac{k-1}{2k}n^2-\frac{r(k-r)}{2k},$$
and ${\rm EX}(n,K_{k+1})=\{T_{n,k}\}$.
\end{theorem}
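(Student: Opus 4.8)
The plan is to prove Theorem~\ref{K_k-free} by Zykov symmetrization, which forces any extremal graph to be complete multipartite, followed by an elementary optimization over the part sizes. Let $G$ be a $K_{k+1}$-free graph on $n$ vertices with $e(G)$ maximum. The central step is to show that the relation on $V(G)$ defined by ``$u=v$ or $uv\notin E(G)$'' is an equivalence relation; equivalently, that non-adjacency is transitive on distinct vertices. Granting this, the equivalence classes are independent sets and every pair of vertices in distinct classes is adjacent, so $G$ is a complete multipartite graph $K_{n_1,\dots,n_m}$; since $G$ contains no $K_{k+1}$ we must have $m\le k$.

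For the transitivity step I would argue by contradiction: suppose there are distinct vertices $u,v,w$ with $uv\notin E(G)$, $vw\notin E(G)$, but $uw\in E(G)$. If $d(u)>d(v)$ (or symmetrically $d(w)>d(v)$), delete all edges at $v$ and join $v$ to exactly the vertices of $N(u)$; since $u$ and $v$ are nonadjacent, every clique of the new graph corresponds to a clique of $G$ of the same size, so it is still $K_{k+1}$-free, yet it has $e(G)-d(v)+d(u)>e(G)$ edges, a contradiction. Otherwise $d(v)\ge d(u)$ and $d(v)\ge d(w)$; now delete all edges at $u$ and at $w$ and join each of $u,w$ to exactly $N(v)$. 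Because neither $u$ nor $w$ lies in $N(v)$, in the new graph $u$ and $w$ are nonadjacent and every clique again maps to a clique of $G$ of the same size, so we stay $K_{k+1}$-free; a short count (the only edge deleted ``twice'' is $uw$) shows the new number of edges is $e(G)+2d(v)-d(u)-d(w)+1>e(G)$, again a contradiction. Hence the relation is transitive and $G=K_{n_1,\dots,n_m}$ with $m\le k$.

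It then remains to choose $m$ and the $n_i$ to maximize $e(K_{n_1,\dots,n_m})=\binom{n}{2}-\sum_{i=1}^m\binom{n_i}{2}$ subject to $n_i\ge 1$ and $\sum_{i=1}^m n_i=n$. One may assume $m=k$: if $m<k$ and some part has size at least $2$, splitting off one vertex into a new part strictly decreases $\sum_i\binom{n_i}{2}$ (when $n<k$ this forces $G=K_n=T_{n,k}$ and there is nothing left to prove). With exactly $k$ parts, $\binom{x}{2}=\tfrac{x(x-1)}{2}$ is convex, so a standard smoothing argument shows that whenever two parts differ in size by at least $2$, moving a vertex from the larger to the smaller strictly decreases $\sum_i\binom{n_i}{2}$; iterating, the unique minimizer up to isomorphism is the almost balanced partition, namely $r$ parts of size $q+1$ and $k-r$ parts of size $q$ where $n=qk+r$. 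Substituting these sizes into $\binom{n}{2}-\sum_i\binom{n_i}{2}$ and simplifying yields $\frac{k-1}{2k}n^2-\frac{r(k-r)}{2k}$; and since every extremal graph was shown to be complete multipartite, this also gives ${\rm EX}(n,K_{k+1})=\{T_{n,k}\}$.

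The step I expect to need the most care is the bookkeeping in the second case of the symmetrization, where two vertices are replaced simultaneously: one must verify both that no $K_{k+1}$ is created (using $u,w\notin N(v)$, so no new clique contains both $u$ and $w$) and that the edge count strictly increases, where the ``$+1$'' contributed by the removed edge $uw$ is exactly what makes $d(v)\ge d(u)$ and $d(v)\ge d(w)$ suffice rather than requiring strict inequalities. Everything after the multipartite reduction is a routine convexity-plus-algebra computation.
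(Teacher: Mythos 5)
The paper does not give a proof of this theorem at all: it is stated as Theorem~\ref{K_k-free} with a citation to a textbook and is used as a black box. So there is no internal proof to compare against; what matters is whether your proposed proof is correct as a standalone argument, and it is.

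Your approach is the standard Zykov-symmetrization route: show that for an extremal $K_{k+1}$-free $G$, non-adjacency is transitive on distinct vertices, hence $G$ is complete multipartite with at most $k$ parts, and then optimize part sizes by convexity. The two-case symmetrization step is handled correctly. In Case~1, replacing $N(v)$ by $N(u)$ cannot create a $K_{k+1}$ because $u\notin N(u)$, so no clique of the new graph contains both $u$ and $v$; the edge count changes by $d(u)-d(v)>0$. In Case~2, since $u,w\notin N(v)$ the two replacements are independent and leave $u,w$ non-adjacent, so again every clique of the new graph maps injectively to a clique of $G$; the edge count is $e(G)-\bigl(d(u)+d(w)-1\bigr)+2d(v)=e(G)+\bigl(2d(v)-d(u)-d(w)\bigr)+1\ge e(G)+1$, and the ``$+1$'' from the doubly-counted edge $uw$ is indeed exactly what turns the weak inequalities $d(v)\ge d(u)$, $d(v)\ge d(w)$ into a strict gain. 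The multipartite optimization is routine: splitting a part of size $\ge 2$ into two parts strictly increases the edge count when $m<k$, and for fixed $m=k$ the convexity of $\binom{x}{2}$ makes the almost-balanced partition the unique minimizer of $\sum_i\binom{n_i}{2}$. Because every strict improvement step contradicts extremality, this also yields $\mathrm{EX}(n,K_{k+1})=\{T_{n,k}\}$, so the uniqueness claim is established. Plugging $r$ parts of size $q+1$ and $k-r$ parts of size $q$ into $\binom{n}{2}-\sum_i\binom{n_i}{2}$ gives $\frac{k-1}{2k}n^2-\frac{r(k-r)}{2k}$, covering the degenerate case $n<k$ as well (there $q=0$, $r=n$, and the formula reduces to $\binom{n}{2}=e(K_n)$). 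In short, a complete and correct proof of a theorem the paper merely cites.
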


Further, for any given $K^s_{k}$, the famous Erd\H{o}s-Stone theorem
gave the upper bound of Tur\'an number ${\rm ex}(n,K^s_{k})$.

\begin{theorem}[Erd\H{o}s, Stone \cite{MR3822066}]\label{K^s_k-free}
For all $k, s\in\mathbb{N}^*$, and any $\varepsilon > 0$, there
exists an integer $n_0$ such that for all $n\geq n_0$, we have
$${\rm ex}(n,K^s_{k+1})<e(T_{n,k})+\varepsilon n^2.$$
\end{theorem}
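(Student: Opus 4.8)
The plan is to prove the equivalent statement that for every $k,s\in\mathbb{N}^*$ and every $\delta>0$ there is an $n_0$ such that every graph $G$ on $n\ge n_0$ vertices with $e(G)\ge\left(1-\tfrac1k+\delta\right)\tfrac{n^2}{2}$ contains $K^s_{k+1}$; since $e(T_{n,k})=\left(1-\tfrac1k\right)\tfrac{n^2}{2}-O(1)$, applying this with $\delta=\varepsilon$ and enlarging $n_0$ to swallow the $O(1)$ term gives the theorem (we may assume $\delta$ is small, since otherwise the hypothesis is vacuous as $e(G)<\tfrac{n^2}{2}$). I would argue by induction on $k$. For the base case $k=1$ the hypothesis reads $e(G)\ge\delta n^2/2$, and since the classical K\H{o}v\'ari--S\'os--Tur\'an theorem gives ${\rm ex}(n,K_{s,s})=O(n^{2-1/s})=o(n^2)$, any such $G$ contains $K_{s,s}=K^s_2$ once $n$ is large.

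For the inductive step, assume the claim for $k-1$ and let $e(G)\ge\left(1-\tfrac1k+\delta\right)\tfrac{n^2}{2}$. First I would clean $G$: repeatedly delete a vertex whose current degree is below $\left(1-\tfrac1k+\tfrac\delta2\right)$ times the current order. Since fewer than $\left(1-\tfrac1k+\tfrac\delta2\right)\tfrac{n^2}{2}$ edges are ever removed, the current graph always retains more than $\tfrac{\delta}{4}n^2$ edges, so its order never drops below $\sqrt{\delta/2}\,n$; thus we reach a subgraph $G'$ on $m\ge\sqrt{\delta/2}\,n$ vertices of minimum degree at least $\left(1-\tfrac1k+\tfrac\delta2\right)m$, whence $e(G')\ge\left(1-\tfrac1k+\tfrac\delta2\right)\tfrac{m^2}{2}$. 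Using $1-\tfrac1k=\left(1-\tfrac1{k-1}\right)+\tfrac1{k(k-1)}$, this exceeds $\left(1-\tfrac1{k-1}+\tfrac1{k(k-1)}\right)\tfrac{m^2}{2}$, so the induction hypothesis at level $k-1$ (with part-size $t$ to be fixed) produces a complete $k$-partite subgraph $K^t_k\subseteq G'$ with parts $W_1,\dots,W_k$, provided $m$ --- hence $n$ --- is large. To attach the $(k+1)$-st part, put $W=W_1\cup\cdots\cup W_k$, $V'=V(G')\setminus W$, and call $u\in V'$ \emph{rich} if $u$ has at least $s$ neighbours in every $W_i$. Double-counting non-edges between $V'$ and each $W_i$ (each $w\in W_i$ misses at most $\left(\tfrac1k-\tfrac\delta2\right)m$ vertices of $V'$, while a non-rich $u$ misses more than $t-s$ vertices of $W_i$) bounds the number of non-rich vertices by $\tfrac{t}{t-s}\left(1-\tfrac{k\delta}{2}\right)m$, which is at most $\left(1-\tfrac{k\delta}{4}\right)m$ for $t$ large; hence $\Omega_\delta(m)$ vertices of $V'$ are rich. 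Each rich vertex picks an $s$-subset from each $W_i$, and there are only $\binom{t}{s}^k$ possible choices, so for $m$ large at least $s$ rich vertices make the same choice, and these together with the $k$ chosen $s$-subsets span a $K^s_{k+1}$.

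The part I expect to be most delicate is ordering the hierarchy of constants: one must first fix the auxiliary part-size $t$ large in terms of $k,s,\delta$ (so that $\tfrac{t}{t-s}$ is close to $1$ and $K^t_k$ has room to spare), and only then choose $n_0$ large in terms of $t$ (so that the cleaned graph $G'$ meets the size threshold of the level-$(k-1)$ hypothesis and so that $\Omega_\delta(m)\ge s\binom{t}{s}^k$). Once this quantifier order is set, the degree-cleaning, the double count, and the closing pigeonhole are all routine; the only genuinely external input is the K\H{o}v\'ari--S\'os--Tur\'an estimate in the base case, which is exactly what upgrades ``positive edge density'' to ``contains a blown-up edge'' --- the seed from which the induction grows.
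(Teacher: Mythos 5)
Your argument is correct and is essentially the standard textbook proof of the Erd\H{o}s--Stone theorem; the paper itself does not prove this statement but simply cites it as a classical result (it appears, e.g., in Diestel's \emph{Graph Theory}), so there is no in-paper proof to compare against. Your reduction to the density formulation, the base case via K\H{o}v\'ari--S\'os--Tur\'an, the degree-cleaning step, the identity $1-\tfrac1k=\bigl(1-\tfrac1{k-1}\bigr)+\tfrac1{k(k-1)}$ that feeds the induction, the non-edge double count that isolates $\Omega_\delta(m)$ rich vertices, and the pigeonhole over $\binom{t}{s}^k$ choices are all sound, and your stated order of choosing constants (first $t$ large in terms of $k,s,\delta$, then $n_0$ large in terms of $t$) is exactly the quantifier discipline the argument requires. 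One small point worth making explicit when written up: you need $\delta<2/k$ for $\tfrac1k-\tfrac\delta2>0$ in the non-edge bound, which you correctly flag by noting $\delta$ may be assumed small, and you should also note that $K^s_{k+1}$ is only required as a (not necessarily induced) subgraph, so the $s$ rich vertices with a common choice need no further condition among themselves.
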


Let $n, k\in\mathbb{N}^*$ with $n=qk+r$ and $0\leq r<k$. In this
paper, We set $\varGamma_{n,k}:=qK_k\cup K_r$, that is, the the
graph consists of $q$ copies of $K_k$ and one copy of $K_r$. In
addition, if $k\ge 3$ is odd, $r = (k \pm 1)/2$ and $0\le\ell\leq
q$,  we define the graph $\varGamma_{n,k,\ell}:=(q-\ell)K_k\cup
(K_{(k-1)/2}\vee\overline{K_{\ell k+r-(k-1)/2}})$. Note that
$\varGamma_{n,k,0}=\varGamma_{n,k}$.

\begin{theorem}[Faudree, Schelp~\cite{1975Path}]\label{E-G-path}
Let $n, k\in\mathbb{N}^*$, $n=qk+r,\ 0\leq r<k$. Then
\[
{\rm
ex}(n,P_{k+1})=e(\varGamma_{n,k})=\frac{k-1}{2}n-\frac{r(k-r)}{2},
\]
and ${\rm EX}(n,P_{k+1})=\{\varGamma_{n,k}\}$ except for $k\ge 3$ is
odd and $r = (k \pm 1)/2$, in which case ${\rm
EX}(n,P_{k+1})=\{\varGamma_{n,k,\ell}: 0 \le \ell \leq q\}$.
\end{theorem}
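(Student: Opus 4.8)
The plan is to establish the extremal number by induction on $n$ and to read the extremal families off the equality cases of that induction; the lower bound and the admissibility of the listed graphs come essentially for free. First I would check that every component of $\varGamma_{n,k}$, and of each $\varGamma_{n,k,\ell}$, is either a complete graph on at most $k$ vertices or a graph $K_{(k-1)/2}\vee\overline{K_s}$ whose longest path has $2\cdot\frac{k-1}{2}+1=k$ vertices, so both families are $P_{k+1}$-free; and, using $\binom k2=tk$ and $\binom{t+1}{2}=\binom t2+t$ with $t=(k-1)/2$, that when $k$ is odd and $r\in\{(k-1)/2,(k+1)/2\}$ one has the identity $e(K_k\cup K_r)=e\bigl(K_{(k-1)/2}\vee\overline{K_{k+r-(k-1)/2}}\bigr)$, so that $e(\varGamma_{n,k,\ell})=e(\varGamma_{n,k})=\frac{k-1}{2}n-\frac{r(k-r)}{2}$ for every admissible $\ell$. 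This identity is exactly the mechanism producing the exceptional family.

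For the upper bound I would induct on $n$. The base case $n\le k$ is immediate: $K_n=\varGamma_{n,k}$ is $P_{k+1}$-free and is the unique graph on $n$ vertices with $\binom n2=\frac{k-1}{2}n-\frac{n(k-n)}{2}$ edges. For $n>k$ take an extremal $P_{k+1}$-free $G$. If $G$ is disconnected, split off a component $G_1$ on $m$ vertices and apply induction to $G_1$ and to $G-G_1$; the key arithmetic fact, which I would verify directly, is that $e(\varGamma_{n,k})-e(\varGamma_{m,k})-e(\varGamma_{n-m,k})$ equals $r_1r_2$ if $r_1+r_2<k$ and $(k-r_1)(k-r_2)$ if $r_1+r_2\ge k$ (with $r_i$ the residue mod $k$ of the two parts), in both cases nonnegative, whence $e(G)\le e(\varGamma_{n,k})$, with equality propagating only to graphs of the stated shape.

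For connected $G$ with $n>k$, $G$ has no Hamilton path, so a longest path $x_0x_1\cdots x_\ell$ has $\ell\le k-1$ and $\ell+1<n$. A rotation argument---all neighbours of $x_0$ and $x_\ell$ lie in $\{x_1,\dots,x_{\ell-1}\}$, and $x_0x_i,\,x_\ell x_{i-1}$ cannot both be edges, since otherwise a cycle through $V(P)$ together with connectedness and $\ell+1<n$ yields a longer path---gives $d(x_0)+d(x_\ell)\le\ell\le k-1$, hence a vertex $v$ with $d(v)\le\lfloor(k-1)/2\rfloor$. Deleting $v$ and using induction yields $e(G)\le e(\varGamma_{n-1,k})+\lfloor(k-1)/2\rfloor$, which suffices because $e(\varGamma_{n,k})-e(\varGamma_{n-1,k})=(n-1)\bmod k$, \emph{as long as} $(n-1)\bmod k\ge\lfloor(k-1)/2\rfloor$. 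When $(n-1)\bmod k$ is small this one-vertex deletion is too wasteful, and this is where I expect the main obstacle: I would instead pass to the block--cut-vertex decomposition of $G$, bound the edges in an end-block (a $2$-connected $P_{k+1}$-free graph, for which I would prove a Kopylov-type estimate by a further rotation/disintegration argument), and recombine with the induction hypothesis applied to $G$ with that block's interior removed.

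Finally, for the characterisation I would carry the equality analysis through all three steps. Equality in the connected step forces $\ell=k-1$, every rotation inequality tight, and $d(x_0)=d(x_\ell)=(k-1)/2$ (so $k$ odd); equality in the disconnected and block steps forces each component to be a $K_k$ or a $K_{(k-1)/2}$ joined to an independent set; assembling these, one finds that for $k$ odd and $n\equiv(k\pm1)/2\pmod k$ the extremal graphs are precisely the $\varGamma_{n,k,\ell}$ with $0\le\ell\le q$, and that $\varGamma_{n,k}$ is the unique extremal graph otherwise. The hardest parts will be the sharp bound for $2$-connected $P_{k+1}$-free graphs together with its equality cases (which feed the exceptional family), and the bookkeeping needed to confirm that no other way of gluing these pieces attains the maximum.
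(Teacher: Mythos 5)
The paper does not prove this statement; it is quoted directly from Faudree and Schelp (1975) as background, so there is no in-paper proof against which to compare your plan. Evaluated on its own terms, the easy parts of your plan are correct: each component of $\varGamma_{n,k,\ell}$ is either a clique on at most $k$ vertices or a graph $K_{(k-1)/2}\vee\overline{K_s}$ whose longest path has $k$ vertices; the identity $e(K_k\cup K_r)=e\bigl(K_{(k-1)/2}\vee\overline{K_{k+r-(k-1)/2}}\bigr)$ does hold exactly for $k$ odd and $r\in\{(k-1)/2,(k+1)/2\}$; the split surplus $e(\varGamma_{n,k})-e(\varGamma_{m,k})-e(\varGamma_{n-m,k})$ is $r_1r_2$ or $(k-r_1)(k-r_2)\ge0$ as you say; and the rotation argument does give $d(x_0)+d(x_\ell)\le\ell\le k-1$ for endpoints of a longest path in a connected graph with no Hamilton path.

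The genuine gap is exactly the one you flag yourself. One-vertex deletion closes the induction only when $(n-1)\bmod k\ge\lfloor(k-1)/2\rfloor$; for the remaining residues you invoke a Kopylov-type bound for $2$-connected $P_{k+1}$-free graphs together with a block-tree recombination, and neither is proved. This is not a peripheral detail: it is precisely where the exceptional family $K_{(k-1)/2}\vee\overline{K_s}$ arises, and it constitutes the bulk of the Faudree--Schelp/Kopylov argument. The equality bookkeeping is also looser than presented: from $d(x_0)+d(x_\ell)=\ell=k-1$ and $\min\{d(x_0),d(x_\ell)\}=\lfloor(k-1)/2\rfloor$ one cannot conclude ``$d(x_0)=d(x_\ell)=(k-1)/2$, hence $k$ odd''---for $k$ even the pair $\bigl((k-2)/2,\ k/2\bigr)$ is equally consistent with those constraints, and ruling out $k$ even at this stage requires examining the structure of $G-v$ delivered by the inductive extremal characterisation, not just this degree count. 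So the outline is credible and broadly in line with the literature, but the $2$-connected estimate, its sharp equality cases, and the gluing lemma are the hard content and remain unsupplied.
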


A path (or cycle) of a graph $G$ or a directed path (or directed
cycle) of a digraph $D$ is called \textit{Hamilton} one if it
contains every vertex of $G$ or $D$. We will also use the following
classic result in this paper.

\begin{theorem}[R\'{e}dei \cite{bang2008digraphs}]\label{H-path}
Every tournament contains a Hamilton directed path.
\end{theorem}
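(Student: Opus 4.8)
The plan is to prove the statement by induction on the order $n$ of the tournament $T$. The base case $n=1$ is trivial, since a single vertex is itself a directed path. For the inductive step, fix any vertex $v\in V(T)$ and apply the induction hypothesis to the tournament $T-v$, which has order $n-1$; this yields a Hamilton directed path $v_1\rightarrow v_2\rightarrow\cdots\rightarrow v_{n-1}$ of $T-v$. It then remains to show that $v$ can be inserted into this path so as to obtain a Hamilton directed path of $T$.

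For the insertion step I would argue by a short case analysis on the arcs between $v$ and the $v_i$. If $v\rightarrow v_1$, then $v\rightarrow v_1\rightarrow\cdots\rightarrow v_{n-1}$ is a Hamilton directed path of $T$; if $v_{n-1}\rightarrow v$, then $v_1\rightarrow\cdots\rightarrow v_{n-1}\rightarrow v$ works. Otherwise, since $T$ is a tournament, we have $v_1\rightarrow v$ and $v\rightarrow v_{n-1}$. Let $i$ be the largest index with $v_i\rightarrow v$; then $1\le i\le n-2$, and by the maximality of $i$ we have $v_{i+1}\nrightarrow v$, hence $v\rightarrow v_{i+1}$. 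Consequently $v_1\rightarrow\cdots\rightarrow v_i\rightarrow v\rightarrow v_{i+1}\rightarrow\cdots\rightarrow v_{n-1}$ is the desired Hamilton directed path of $T$.

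There is no real obstacle here: the only content is the elementary case analysis in the insertion step, and it goes through directly because every two vertices of a tournament are joined by an arc. An equivalent phrasing I would keep in mind, avoiding induction, is to take a \emph{longest} directed path $P=v_1\rightarrow\cdots\rightarrow v_k$ in $T$ and show $P$ is Hamilton: if some vertex $w$ lay outside $P$, then maximality of $P$ would force $v_1\rightarrow w$ and $w\rightarrow v_k$, so some consecutive pair $v_i,v_{i+1}$ would satisfy $v_i\rightarrow w\rightarrow v_{i+1}$, producing a longer directed path and contradicting the choice of $P$. This uses exactly the same insertion idea as the inductive proof.
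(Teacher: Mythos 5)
Your proof is correct: the induction with the insertion argument (or equivalently, the longest-path argument you sketch at the end) is the standard textbook proof of R\'edei's theorem, and the case analysis in the insertion step is complete and sound. The paper itself does not give a proof of this statement; it is cited as a classical result from \cite{bang2008digraphs}, so there is no in-paper argument to compare against, but your argument matches the usual proof found in that reference.
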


\subsection{Our results}

\subsubsection{The Tur\'an problem of directed paths}

For integers $k,\ell$, we denote by $[k,\ell]$ the set of integers
$i$ with $k\leq i\leq \ell$. For $n, k\in\mathbb{N}^*$ with $n\geq
k$, we define a class of digraphs on $n$ vertices, denoted by
$\overrightarrow{T_{n,k}}$, which satisfies the following
properties:
\begin{compactenum}[\rm (i)]
  \item $V(\overrightarrow{T_{n,k}})$ has an almost balanced partition $\{V_1,V_2,\ldots,V_k\}$;\label{path-item:a}
  \item $V_i$ is an independent set of $\overrightarrow{T_{n,k}}$ for $i\in [1,k]$;  \label{path-item:b}
  \item for any two parts $V_i$ and $V_j$ with $1\le i< j\le k$,
$V_i\mapsto V_j$ (see
Figure~1).\label{path-item:c}
\end{compactenum}

\noindent {\bf Remark:} Note that there may be more than one
non-isomorphic $\overrightarrow{T_{n,k}}$ for given $n,k$. However
all the $\overrightarrow{T_{n,k}}$ have the same arc number. We
denote by $\overrightarrow{\mathcal{T}_{n,k}}$ the set of all
$\overrightarrow{T_{n,k}}$. Observe that the underlying multigraph
of $\overrightarrow{T_{n,k}}$ is the Tur\'an graph $T_{n,k}$. We
sometimes call $\overrightarrow{T_{n,k}}$ a \emph{transitive
orientation} of $T_{n,k}$.

\begin{center}
\begin{picture}(200,90)
\thicklines \put(0,0){\put(30,40){\circle{24}}
\put(70,40){\circle{24}} \put(150,40){\circle{24}}
\put(190,40){\circle{24}} \put(103,40){\circle*{3}}
\put(110,40){\circle*{3}} \put(117,40){\circle*{3}}
\multiput(30,40)(40,0){4}{\put(12,0){\line(1,0){16}}
\put(24,0){\vector(1,0){0}}}
\multiput(30,40)(40,0){3}{\qbezier(10.4,6)(40,26)(69.6,6)
\put(44,16){\vector(1,0){0}}}
\multiput(30,40)(40,0){2}{\qbezier(6,10.4)(60,40)(114,10.4)
\put(64,25){\vector(1,0){0}}} \qbezier(30,52)(110,100)(190,52)
\put(114,76){\vector(1,0){0}} \put(25,35){$V_1$} \put(65,35){$V_2$}
\put(139,35){$V_{k-1}$} \put(185,35){$V_k$} \put(60,10){Figure 1.
Digraph $\overrightarrow{T_{n,k}}$} }
\end{picture}

\end{center}

Now we deal with the Tur\'an Problem for directed paths
$\overrightarrow{P_{k+1}}$. For $k=1$, the
$\overrightarrow{P_2}$-free digraph is empty, and thus ${\rm
ex}(n,\overrightarrow{P_2})=0$. For $k=2$, clearly ${\rm
ex}(1,\overrightarrow{P_3})=0$, ${\rm
ex}(2,\overrightarrow{P_3})=2$, and when $n\geq 3$, we have the
following result.

\begin{theorem}\label{3-path}
For all $n\geq 3$,
\[
{\rm
ex}(n,\overrightarrow{P_3})=a(\overrightarrow{T_{n,2}})=\left\lfloor{\frac{n^2}{4}}\right\rfloor,
\]
and ${\rm
EX}(n,\overrightarrow{P_3})=\overrightarrow{\mathcal{T}_{n,2}}$ for
$n\ge 5$; and ${\rm
EX}(n,\overrightarrow{P_3})=\overrightarrow{\mathcal{T}_{n,2}}\cup\{\overleftrightarrow{\varGamma_{n,2}}\}$
for $n=3,4$.
\end{theorem}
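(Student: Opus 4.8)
The plan is to exploit the rigid local structure forced by $\overrightarrow{P_3}$-freeness. The first step is the dichotomy: if $D$ is $\overrightarrow{P_3}$-free and a vertex $v$ has both an in-arc $u\rightarrow v$ and an out-arc $v\rightarrow w$, then $u\rightarrow v\rightarrow w$ is a $\overrightarrow{P_3}$ unless $u=w$, so $u,v$ form a $2$-cycle; applying the same argument to any third vertex shows neither $u$ nor $v$ is incident with a further arc. Hence every $2$-cycle of $D$ spans a component isomorphic to $\overleftrightarrow{K_2}$, and $D$ is the disjoint union of $c\ge 0$ such components with a $2$-cycle-free digraph $D'$ on $n-2c$ vertices in which every vertex has empty in-neighbourhood or empty out-neighbourhood. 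Letting $A$ be the vertices of $D'$ with an out-arc and $B$ those with an in-arc, we have $A\cap B=\varnothing$ and every arc of $D'$ runs from $A$ to $B$, so $a(D')\le |A|\,|B|\le\lfloor(n-2c)^2/4\rfloor$ and $a(D)\le f(c):=2c+\lfloor(n-2c)^2/4\rfloor$.

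The second step is to maximise $f$ over $0\le c\le\lfloor n/2\rfloor$. A one-line floor computation gives $f(c+1)-f(c)=2c+3-n$, so $f$ is strictly convex on $\{0,1,\dots,\lfloor n/2\rfloor\}$ and attains its maximum only at the endpoints, with $f(0)=\lfloor n^2/4\rfloor$ and $f(\lfloor n/2\rfloor)=2\lfloor n/2\rfloor$. Since $\lfloor n^2/4\rfloor\ge 2\lfloor n/2\rfloor$ for all $n\ge 3$, with equality exactly when $n\in\{3,4\}$, we get $a(D)\le\lfloor n^2/4\rfloor$. The matching lower bound is immediate: any $\overrightarrow{T_{n,2}}$ has an almost balanced bipartition $V_1\mapsto V_2$, contains no $\overrightarrow{P_3}$ since no arc leaves $V_2$, and has $|V_1|\,|V_2|=\lfloor n^2/4\rfloor$ arcs; and for $n\in\{3,4\}$ one checks directly that $\overleftrightarrow{\varGamma_{n,2}}$ also attains $\lfloor n^2/4\rfloor$ arcs.

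The third step is the extremal characterisation, obtained by tracing equality through $a(D)\le f(c)\le\lfloor n^2/4\rfloor$. If $a(D)=\lfloor n^2/4\rfloor$ then $c$ maximises $f$. For $n\ge 5$, strict convexity forces $c=0$, so $D=D'$; the equalities $|A|\,|B|=\lfloor n^2/4\rfloor$ and $|A|+|B|\le n$ then force $|A|+|B|=n$ (no isolated vertices), $\{|A|,|B|\}$ almost balanced, and all arcs from $A$ to $B$ present, i.e. $D\in\overrightarrow{\mathcal{T}_{n,2}}$. For $n\in\{3,4\}$ the maximisers of $f$ are $c=0$, giving $\overrightarrow{\mathcal{T}_{n,2}}$ as above, and $c=\lfloor n/2\rfloor$; in the latter case $D'$ has at most one vertex and hence no arcs, so $D$ consists of $\lfloor n/2\rfloor$ copies of $\overleftrightarrow{K_2}$ together with an isolated vertex when $n=3$, which is exactly $\overleftrightarrow{\varGamma_{n,2}}$. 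This yields the stated families.

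Everything here is elementary; the part requiring care — and the only real obstacle — is the last step, where one must check that at equality \emph{every} inequality in $a(D)\le 2c+|A|\,|B|\le 2c+\lfloor(n-2c)^2/4\rfloor\le\lfloor n^2/4\rfloor$ is tight, so that completeness of the arcs from $A$ to $B$, balance of $\{|A|,|B|\}$, absence of isolated vertices, and the exact value of $c$ are all genuinely forced, and in particular so that the extra extremal digraph $\overleftrightarrow{\varGamma_{n,2}}$ appears precisely for $n\in\{3,4\}$.
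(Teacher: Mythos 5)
Your proposal is correct and follows essentially the same route as the paper: isolate the $\overrightarrow{C_2}$-components (count $c=t$), bound the arcs of the remaining $2$-cycle-free digraph $D'$ by $\lfloor (n-2c)^2/4\rfloor$, maximize $2c+\lfloor(n-2c)^2/4\rfloor$ over $c$, and trace equality to recover $\overrightarrow{\mathcal{T}_{n,2}}$ (and $\overleftrightarrow{\varGamma_{n,2}}$ when $n\in\{3,4\}$). The only small variation is that you obtain the bound on $a(D')$ directly from the source--sink bipartition $A\cup B$, whereas the paper observes that ${\rm Umg}(D')$ is $K_3$-free (via R\'edei's theorem) and invokes Tur\'an's theorem; both yield the identical estimate and the identical extremal analysis.
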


However, for $k\geq 3$ and $n$ is small, it is difficult to
determine the precise Tur\'an number ${\rm
ex}(n,\overrightarrow{P_{k+1}})$. With more effort, we prove the
following result.

\begin{theorem} \label{P_k}
For every $k\in \mathbb{N}^*$, there exists $n_0$ such that if
$n\geq n_0$, $n=qk+r,\ 0\leq r<k$, then
\[
{\rm ex}
(n,\overrightarrow{P_{k+1}})=a(\overrightarrow{T_{n,k}})=\frac{k-1}{2k}n^2-\frac{r(k-r)}{2k},
\]
and ${\rm
EX}(n,\overrightarrow{P_{k+1}})=\overrightarrow{\mathcal{T}_{n,k}}$.
\end{theorem}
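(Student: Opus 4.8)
The plan is to show that the transitive orientation $\overrightarrow{T_{n,k}}$ of the Tur\'an graph is the unique $\overrightarrow{P_{k+1}}$-free digraph attaining $e(T_{n,k})$ arcs, and that for $n$ large nothing beats it. The lower bound is immediate: in any $D\in\overrightarrow{\mathcal{T}_{n,k}}$ a directed path meets the parts $V_1,\dots,V_k$ with strictly increasing indices, so it has at most $k$ vertices; hence $\overrightarrow{T_{n,k}}$ is $\overrightarrow{P_{k+1}}$-free and ${\rm ex}(n,\overrightarrow{P_{k+1}})\ge a(\overrightarrow{T_{n,k}})=e(T_{n,k})$, the quantity computed in Theorem~\ref{K_k-free}. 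For the rest, I would fix a $\overrightarrow{P_{k+1}}$-free digraph $D$ on $n$ vertices, set $G={\rm Ung}(D)$, and let $H\subseteq G$ be the ``digon graph'' (with $uv\in E(H)$ iff $uv,vu\in A(D)$), so that counting arcs by underlying edge gives $a(D)=e(G)+e(H)$.

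\textbf{A coarse upper bound.} First I would delete one arc from each digon to get an orientation $D^-$ of $G$; it is still $\overrightarrow{P_{k+1}}$-free, so its longest directed path has at most $k$ vertices, and the Gallai--Roy theorem gives $\chi(G)=\chi({\rm Ung}(D^-))\le k$. In particular $G$ is $K_{k+1}$-free --- which already follows from R\'edei's Theorem~\ref{H-path}, since the sub-digraph of $D$ on any $K_{k+1}$ of $G$ contains a spanning tournament and hence a Hamilton directed path --- so $e(G)\le e(T_{n,k})$ by Theorem~\ref{K_k-free}. Next, a $P_{k+1}$ in $H$ is a $\overrightarrow{P_{k+1}}$ in $D$, so $H$ is $P_{k+1}$-free and Theorem~\ref{E-G-path} gives $e(H)\le\frac{k-1}{2}n$. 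Hence $a(D)\le e(T_{n,k})+\frac{k-1}{2}n$.

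\textbf{Stability, then cleaning.} Now take $D$ extremal, so $a(D)\ge e(T_{n,k})$ and thus $e(G)\ge e(T_{n,k})-\frac{k-1}{2}n$; fix a proper $k$-colouring of $G$ with (independent) classes $V_1,\dots,V_k$. Since $e(G)\le\sum_{i<j}|V_i||V_j|=\binom n2-\sum_i\binom{|V_i|}2\le e(T_{n,k})$, the two bounds on $e(G)$ squeeze out that the $|V_i|$ are within $O(\sqrt n)$ of $\tfrac nk$ --- in particular each class has size $\Theta(n)$ --- and that only $O(n)$ cross pairs are non-adjacent in $G$. A ``majority $3$-cycle'' $V_a\to V_b\to V_c\to V_a$ would, each majority direction carrying $\Omega(n^2)$ arcs, let me greedily grow directed paths of unbounded length through $V_a,V_b,V_c$; so the ``beats'' relation on the classes is transitive, and I may relabel so that for $i<j$ at least half the cross pairs of $V_i,V_j$ are oriented $V_i\to V_j$ (so $\Omega(n^2)$ arcs run $V_i\to V_{i+1}$ for each $i$). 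The remaining, and hardest, claim is that $D$ has no ``backward'' arc $v\to u$ with $v\in V_j$, $u\in V_i$, $i<j$; since a digon between distinct classes contains such an arc and each class is $G$-independent, this makes $D$ a transitive orientation of $G$, whence $a(D)=e(G)$, which together with $a(D)\ge e(T_{n,k})$ and the uniqueness in Theorem~\ref{K_k-free} forces $G=T_{n,k}$ and $D\in\overrightarrow{\mathcal{T}_{n,k}}$, completing the proof. To kill a backward arc $v\to u$ I would route a directed path on $\ge k+1$ vertices through it: the generic attempt is $p_1\to\cdots\to p_{j-1}\to v\to u\to q_{i+1}\to\cdots\to q_k$ with each $p_t,q_t\in V_t$ chosen greedily among ``forward'' vertices ($j+k-i+1\ge k+2$ vertices in all); if $v$ has no forward in-neighbour in $V_{j-1}$, anchor the front of the path at the nearest class sending a forward arc into $v$; and if $v$ gets no forward arc from $V_1\cup\cdots\cup V_{j-1}$ at all, then $v$ dominates $\Theta(n)$ of those vertices and already $v\to p_1\to p_2\to\cdots\to p_k$ (one vertex per class) is a $\overrightarrow{P_{k+1}}$ --- a contradiction in every case.

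\textbf{Where the difficulty lies.} I expect the cleaning step to be the main obstacle: promoting the $O(n)$-approximate picture to an exact identification of $D$ with $\overrightarrow{T_{n,k}}$. This means making the path-routing fully rigorous (in particular handling vertices incident to unusually many backward arcs via the case split above), checking that every greedy choice survives the $O(n)$ atypical cross pairs, and from this extracting the explicit threshold $n_0=n_0(k)$. The earlier steps are routine applications of Gallai--Roy/R\'edei, Tur\'an (Theorem~\ref{K_k-free}) and Faudree--Schelp (Theorem~\ref{E-G-path}).
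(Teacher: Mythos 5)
Your approach is genuinely different from the paper's. The paper locates, via the Erd\H{o}s--Stone theorem applied to ${\rm Ung}(D)$, an \emph{induced} copy of $\overrightarrow{T_{k^2,k}}$ inside $D$; it then uses the $k$ ``pivot'' vertices available in each part $U_i$ of that rigid substructure, together with an exchange argument (modify $D$ to a $\overrightarrow{P_{k+1}}$-free digraph with strictly more arcs, contradicting maximality), to extend the partition of $\overrightarrow{T_{k^2,k}}$ to a transitive partition of all of $V(D)$. You instead take a Gallai--Roy/R\'edei colouring, run a stability estimate to get nearly balanced classes with $O(n)$ missing cross pairs, impose a transitive ``beats'' order by a majority-$3$-cycle argument, and then try to kill all backward arcs by routing a directed $(k{+}1)$-path through each one. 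The computation $a(D)\le{\rm ex}(n,K_{k+1})+{\rm ex}(n,P_{k+1})$ is the same in both proofs; the route to the exact extremal structure is not.

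The cleaning step as sketched has a concrete gap, located precisely where you predicted trouble. In your Case~3, you assert that if $v\in V_j$ receives no forward arc from $V_1\cup\cdots\cup V_{j-1}$, then $v$ ``dominates $\Theta(n)$ of those vertices.'' This does not follow from the global bounds you established. You only have that the \emph{total} number of non-adjacent cross pairs in $G$ is $O(n)$ --- more precisely at most about $\tfrac{k-1}{2}n$ --- while a single class $V_i$ has size about $n/k$. For every $k\ge 2$ we have $\tfrac{k-1}{2}n\ge n/k$, so all of the slack could be concentrated at $v$: the vertex $v$ could be non-adjacent in $G$ to essentially all of $V_1\cup\cdots\cup V_{j-1}$, receive no forward arc, and dominate only $O(1)$ of those vertices (the single backward arc $v\to u$ you started with). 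Then neither the $v\to p_1\to\cdots\to p_k$ route nor the anchored route exists, and the contradiction fails. A similar over-reach appears in the transitivity step: ``each majority direction carries $\Omega(n^2)$ arcs, so greedily grow a path of unbounded length'' needs a per-vertex or dependent-random-choice cleaning, since a bare majority threshold does not guarantee the out-neighbourhoods you pick at consecutive steps intersect. Both gaps are repairable --- for instance, an induction on $n$ gives the minimum degree $d_D(v)\ge e(T_{n,k})-e(T_{n-1,k})\approx \tfrac{k-1}{k}n$ needed to force $v$ to actually dominate $\Theta(n)$ vertices in Case~3 --- but that is nontrivial extra machinery. The paper sidesteps this entirely: because the induced $\overrightarrow{T_{k^2,k}}$ provides $k$ interchangeable pivots in each class, its path constructions never depend on the neighbourhood of the single problematic vertex $v$, so no degree lower bound is needed.
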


\begin{corollary} \label{P_k-upper-bound}
For all $k,n\in \mathbb{N}^*$,
$${\rm ex}(n,\overrightarrow{P_{k+1}})\leq{\rm ex}(n,K_{k+1})+{\rm ex}(n,P_{k+1}).$$
\end{corollary}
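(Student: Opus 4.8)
The plan is to prove the inequality directly, for \emph{every} $n$ and $k$. (For large $n$ it is already contained in Theorems~\ref{K_k-free}, \ref{E-G-path} and \ref{P_k}, since then ${\rm ex}(n,\overrightarrow{P_{k+1}})=e(T_{n,k})={\rm ex}(n,K_{k+1})\le{\rm ex}(n,K_{k+1})+{\rm ex}(n,P_{k+1})$; the point of the corollary is that it holds for all $n$.) Let $D$ be an arbitrary $\overrightarrow{P_{k+1}}$-free digraph on $n$ vertices. I would split the adjacent pairs of $D$ into \emph{digons} (pairs $\{u,v\}$ with both $u\to v$ and $v\to u$) and \emph{single pairs} (pairs joined by exactly one arc). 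Let $H={\rm Ung}(D)$ be the underlying graph, let $B$ be the spanning subgraph of $H$ whose edges are exactly the digons of $D$, and let $s$ be the number of single pairs. Then $a(D)=2e(B)+s$ and $e(H)=e(B)+s$, so $a(D)=e(H)+e(B)$. Hence it suffices to prove the two bounds $e(H)\le{\rm ex}(n,K_{k+1})$ and $e(B)\le{\rm ex}(n,P_{k+1})$.

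For the first bound I would argue that $H$ is $K_{k+1}$-free. Suppose some set $S$ of $k+1$ vertices induced a complete subgraph of $H$. Then between every two vertices of $S$ there is at least one arc of $D$, so selecting one arc out of each such pair produces a tournament $\overrightarrow{T_{k+1}}$ as a subdigraph of $D[S]$. By R\'edei's theorem (Theorem~\ref{H-path}) this tournament has a Hamilton directed path, which is a copy of $\overrightarrow{P_{k+1}}$ in $D$ --- a contradiction. Therefore $H$ is $K_{k+1}$-free, and by definition of the Tur\'an number $e(H)\le{\rm ex}(n,K_{k+1})$.

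For the second bound I would argue that $B$ is $P_{k+1}$-free. If $B$ contained a path $v_0v_1\cdots v_k$ on $k+1$ vertices, then each edge $v_iv_{i+1}$ would be a digon of $D$, so in particular $v_i\to v_{i+1}$ for all $i$; since $v_0,\dots,v_k$ are distinct, $v_0\to v_1\to\cdots\to v_k$ would be a copy of $\overrightarrow{P_{k+1}}$ in $D$, again a contradiction. Hence $B$ is $P_{k+1}$-free and $e(B)\le{\rm ex}(n,P_{k+1})$. Combining the two bounds gives $a(D)\le{\rm ex}(n,K_{k+1})+{\rm ex}(n,P_{k+1})$, and since $D$ was an arbitrary $\overrightarrow{P_{k+1}}$-free digraph on $n$ vertices, the corollary follows.

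The argument is short, so there is no real technical obstacle; the only thing worth flagging is the conceptual step, namely spotting the decomposition $a(D)=e({\rm Ung}(D))+e(B)$. It is exactly this identity that separates the ``dense'' contribution of $D$ (its underlying simple graph, which cannot contain $K_{k+1}$ by R\'edei's theorem) from the ``extra'' contribution of the digons (which cannot carry a long undirected path, because a digon can always be traversed in whichever direction is needed). One should finally just double-check the degenerate cases $k\in\{1,2\}$ and $n\le k$, where the relevant Tur\'an numbers collapse to $0$, $\lfloor n/2\rfloor$, or $\binom{n}{2}$, and the inequality holds trivially.
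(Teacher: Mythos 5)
Your argument is correct and is essentially the same as the paper's: the paper (in a parenthetical remark inside the proof of Claim~\ref{inducedsub}) writes $a(D)=e({\rm Ung}(D))+t$ where $t$ is the number of digons, shows ${\rm Ung}(D)$ is $K_{k+1}$-free via R\'edei's theorem, and bounds $t$ by ${\rm ex}(n,P_{k+1})$, which is exactly your decomposition $a(D)=e(H)+e(B)$ with the same two bounds. Your write-up merely makes explicit the step the paper leaves implicit, namely that the digon graph $B$ must be $P_{k+1}$-free because a path of digons can be traversed as a directed path.
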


\subsubsection{The Tur\'an problem of directed cycles }
For integers $n,k\in \mathbb{N}^*$, $n=qk+r,\ 0\leq r<k$, we define
a class of digraphs on $n$ vertices, denoted by
$\overrightarrow{F_{n,k}}$, which satisfies the following properties
(set $q'=q$ if $r=0$ and $q'=q+1$ otherwise):
\begin{compactenum}[\rm (i)]
  \item $V(\overrightarrow{F_{n,k}})$ has a partition $\{V_1,V_2,\ldots,V_{q'}\}$ such that $|V_i|=k$ for all but at most one $V_i$; \label{directed path-item:a}
  \item $\overrightarrow{F_{n,k}}[V_i]$ is a complete digraph for $i\in [1,q']$;  \label{directed path-item:b}
  \item for any two parts $V_i$ and $V_j$ with $1\le i< j\le q'$,
$V_i\mapsto V_j$ (see Figure~2).\label{directed path-item:c}
\end{compactenum}
Note that there may be more than one non-isomorphic
$\overrightarrow{F_{n,k}}$ for given $n,k$. However all
$\overrightarrow{F_{n,k}}$ have the same arc number. We denote by
$\overrightarrow{\mathcal{F}_{n,k}}$ the set of all
$\overrightarrow{F_{n,k}}$.

\begin{center}
\begin{picture}(220,90)
\thicklines \put(0,5){\put(30,40){\circle{24}}
\put(70,40){\circle{24}} \put(150,40){\circle{24}}
\put(190,40){\circle{24}} \put(103,40){\circle*{3}}
\put(110,40){\circle*{3}} \put(117,40){\circle*{3}}
\multiput(30,40)(40,0){4}{\put(12,0){\line(1,0){16}}
\put(24,0){\vector(1,0){0}}}
\multiput(30,40)(40,0){3}{\qbezier(10.4,6)(40,26)(69.6,6)
\put(44,16){\vector(1,0){0}}}
\multiput(30,40)(40,0){2}{\qbezier(6,10.4)(60,40)(114,10.4)
\put(64,25){\vector(1,0){0}}} \qbezier(30,52)(110,100)(190,52)
\put(114,76){\vector(1,0){0}}
\put(23,35){$\overleftrightarrow{K_k}$}
\put(63,35){$\overleftrightarrow{K_k}$}
\put(143,35){$\overleftrightarrow{K_r}$}
\put(183,35){$\overleftrightarrow{K_k}$}}

\put(25,30){$\underbrace{\ \ \ \ \ \ \ \ \ \ \ \ \ \ \ \ \ \ \ \ \ \
\ \ \ \ \ \ \ \ \ \ \ \ \ \ \ \ \ \ \ \ \ \ \ \ \ }$}
\put(30,10){$q$ copies of $\overleftrightarrow{K_k}$ and one copy of
$\overleftrightarrow{K_r}$}
\end{picture}

\small Figure~2. Digraph $\overrightarrow{F_{n,k}}$.
\end{center}

Now we deal with the Tur\'an problem for directed cycles
$\overrightarrow{C_k}$. First, for $k\in\{2,3\}$,
$\overrightarrow{C_2}$ can be regarded as a complete digraph
$\overleftrightarrow{K_2}$, and $\overrightarrow{C_3}$ can be
regarded as a tournament $\overrightarrow{T_3}$. For complete
digraph $\overleftrightarrow{K_k}$ and tournament
$\overrightarrow{T_k}$, let us recall the results of Brown and
Harary~\cite{brown1970extremal}.

\begin{theorem}[Brown, Harary~\cite{brown1970extremal}]\label{Brown-Harary}
Let $k,n\in \mathbb{N}^*$, then
\[
{\rm ex}(n,\overleftrightarrow{K_{k+1}})=\binom{n}{2}+{\rm ex}(n,
K_{k+1}) \mbox{ and \ } {\rm ex}(n,\overrightarrow{T_{k+1}})=2 {\rm
ex}(n, K_{k+1}).
\]
\end{theorem}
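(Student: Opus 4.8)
The plan is to reduce each forbidden-subdigraph condition to a forbidden-subgraph condition on an associated undirected graph, and then to quote Tur\'an's theorem (Theorem~\ref{K_k-free}).

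For ${\rm ex}(n,\overleftrightarrow{K_{k+1}})$: to a digraph $D$ on $n$ vertices associate the graph $G_D$ on $V(D)$ whose edges are the \emph{digon pairs}, i.e.\ those $\{u,v\}$ with $u\rightarrow v$ and $v\rightarrow u$. A copy of $\overleftrightarrow{K_{k+1}}$ in $D$ is exactly a set of $k+1$ vertices that are pairwise joined by digons, so $D$ is $\overleftrightarrow{K_{k+1}}$-free iff $G_D$ is $K_{k+1}$-free, and then $e(G_D)\le{\rm ex}(n,K_{k+1})$ by Theorem~\ref{K_k-free}. Counting arcs pair by pair, a digon pair contributes $2$ arcs and every other pair at most $1$, so $a(D)\le 2e(G_D)+\bigl(\binom{n}{2}-e(G_D)\bigr)=\binom{n}{2}+e(G_D)\le\binom{n}{2}+{\rm ex}(n,K_{k+1})$. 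For the matching lower bound I would take the digraph obtained from $\overleftrightarrow{K_n}$ by deleting, for each pair of vertices inside a common part of the Tur\'an graph $T_{n,k}$, one of its two arcs; its digon pairs are exactly the edges of $T_{n,k}$, so it is $\overleftrightarrow{K_{k+1}}$-free, and it has $2e(T_{n,k})+\bigl(\binom{n}{2}-e(T_{n,k})\bigr)=\binom{n}{2}+{\rm ex}(n,K_{k+1})$ arcs.

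For ${\rm ex}(n,\overrightarrow{T_{k+1}})$: here I would work with the underlying graph ${\rm Ung}(D)$. A set of $k+1$ vertices spans a sub-tournament of $D$ precisely when it induces a complete underlying graph (pick one arc per pair to obtain a tournament; conversely a tournament has complete underlying graph), so $D$ is $\overrightarrow{T_{k+1}}$-free iff ${\rm Ung}(D)$ is $K_{k+1}$-free. Hence $a(D)\le 2e({\rm Ung}(D))\le 2\,{\rm ex}(n,K_{k+1})$ by Theorem~\ref{K_k-free}, and the double orientation $\overleftrightarrow{T_{n,k}}$, whose underlying graph is the $K_{k+1}$-free graph $T_{n,k}$, has $2e(T_{n,k})=2\,{\rm ex}(n,K_{k+1})$ arcs and is therefore extremal.

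The argument is short once these correspondences are in place; the only subtle point is to state them carefully --- in particular to read ``$\overrightarrow{T_{k+1}}$-free'' as ``containing no sub-tournament on $k+1$ vertices at all'', so that the extremal value does not depend on which orientation of $K_{k+1}$ one has in mind --- and the one nontrivial ingredient is Tur\'an's theorem, which we are permitted to use.
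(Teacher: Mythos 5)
Your reduction for $\overleftrightarrow{K_{k+1}}$ is correct and complete: $D$ is $\overleftrightarrow{K_{k+1}}$-free precisely when the digon graph $G_D$ is $K_{k+1}$-free, the inequality $a(D)\le\binom{n}{2}+e(G_D)$ holds for every digraph, and the matching construction (saturate all arcs, then delete one arc inside each part of $T_{n,k}$) is exactly right.

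The tournament half, however, has a genuine gap. The biconditional ``$D$ is $\overrightarrow{T_{k+1}}$-free iff ${\rm Ung}(D)$ is $K_{k+1}$-free'' fails for a fixed tournament $\overrightarrow{T_{k+1}}$: a set of $k+1$ vertices with complete underlying graph and one arc per pair yields \emph{some} tournament, not necessarily the one you are trying to forbid. For instance, a transitive tournament on three vertices has ${\rm Ung}=K_3$ but contains no $\overrightarrow{C_3}$. You flag this by proposing to read ``$\overrightarrow{T_{k+1}}$-free'' as ``containing no tournament on $k+1$ vertices at all'', and under that reading your argument is sound --- but it then proves a strictly weaker statement. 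The theorem as Brown and Harary state it, and as this paper actually uses it (in the proof of Theorem~\ref{C_k}, the case $k=2$ invokes Theorem~\ref{Brown-Harary} for the \emph{specific} tournament $\overrightarrow{T_3}=\overrightarrow{C_3}$), is the per-tournament version: for \emph{each} tournament $T$ on $k+1$ vertices, ${\rm ex}(n,T)=2\,{\rm ex}(n,K_{k+1})$. Your construction gives the matching lower bound for every such $T$, since $\overleftrightarrow{T_{n,k}}$ avoids all tournaments, but the upper bound ${\rm ex}(n,T)\le 2\,{\rm ex}(n,K_{k+1})$ for a fixed $T$ does not follow from ${\rm Ung}(D)$ being $K_{k+1}$-free --- a $T$-free digraph can perfectly well contain many copies of $K_{k+1}$ in its underlying graph. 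Closing that gap is the real content of the Brown--Harary theorem and requires an additional argument (for example, a careful count that plays the digon pairs off against the one-way pairs, or an argument showing that a complete digraph on $k+1$ vertices with enough extra digons must contain the target tournament). Since the paper cites this result without proof, there is no internal proof to compare against, but as a blind attempt yours establishes only the weaker ``avoid-every-tournament'' version.
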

Moreover, Brown and Harary characterized all the extremal digraphs
for $\overleftrightarrow{K_k}$ and $\overrightarrow{T_k}$.
Therefore, the Tur\'an number and the extremal digraphs for
$\overrightarrow{C_2}$ and $\overrightarrow{C_3}$ can be known
directly by the results of Brown and Harary. For any directed cycles
$\overrightarrow{C_{k+1}}$, $k\geq 3$, we have the following
result.

\begin{theorem} \label{C_k}
Let $k,n\in \mathbb{N}^*$, $n=qk+r$, $0\leq r<k$. Then
\[
{\rm
ex}(n,\overrightarrow{C_{k+1}})=a(\overrightarrow{F_{n,k}})=\frac{1}{2}n^2+\frac{k-2}{2}n-\frac{r(k-r)}{2}.
\]
Furthermore, ${\rm
EX}(n,\overrightarrow{C_{k+1}})=\overrightarrow{\mathcal{F}_{n,k}}$
for $k\geq 3$.
\end{theorem}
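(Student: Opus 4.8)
The lower bound is the easy direction. In $\overrightarrow{F_{n,k}}$ every arc joining two distinct blocks $V_i,V_j$ with $i<j$ is directed from $V_i$ to $V_j$, so a directed cycle cannot leave a block and must lie inside some $\overrightarrow{F_{n,k}}[V_i]$; as this is a complete digraph on at most $k$ vertices, it has no directed cycle of length $k+1$. Counting arcs,
$a(\overrightarrow{F_{n,k}})=\sum_i|V_i|(|V_i|-1)+\bigl(\binom{n}{2}-\sum_i\binom{|V_i|}{2}\bigr)=\binom{n}{2}+q\binom{k}{2}+\binom{r}{2}=\tfrac12 n^2+\tfrac{k-2}{2}n-\tfrac{r(k-r)}{2}$,
so $\mathrm{ex}(n,\overrightarrow{C_{k+1}})\ge a(\overrightarrow{F_{n,k}})$.

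For the matching upper bound and the determination of $\mathrm{EX}(n,\overrightarrow{C_{k+1}})$ I would argue by induction on $n$; the case $n\le k$ is trivial since then no digraph on $n$ vertices contains $\overrightarrow{C_{k+1}}$ and the unique extremum is $\overleftrightarrow{K_n}=\overrightarrow{F_{n,k}}$. For $n\ge k+1$ let $D$ be $\overrightarrow{C_{k+1}}$-free, and let $S_1,\dots,S_m$ be its strongly connected components, indexed so that $S_1$ is a source of the (acyclic) condensation. Since between two distinct components every pair of vertices carries at most one arc, $a(D)\le a(D[S_1])+a(D-S_1)+s_1(n-s_1)$ with $s_1=|S_1|$, and using $\binom{n}{2}=\binom{s_1}{2}+\binom{n-s_1}{2}+s_1(n-s_1)$ together with the induction hypothesis on $D-S_1$ gives $a(D)\le\binom{n}{2}+\bigl(a(D[S_1])-\binom{s_1}{2}\bigr)+g(n-s_1)$, where $g(s):=\tfrac{k-1}{2}s-\tfrac{(s\bmod k)(k-(s\bmod k))}{2}=\mathrm{ex}(s,P_{k+1})$ by Theorem~\ref{E-G-path}, so that $\binom{s}{2}+g(s)=a(\overrightarrow{F_{s,k}})$. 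An elementary computation (convexity of $x\mapsto\binom{x}{2}$) shows $g$ is superadditive with $g(a)+g(b)=g(a+b)$ only if $k\mid a$ or $k\mid b$. Hence, provided the Key Lemma below gives $a(D[S_1])-\binom{s_1}{2}\le g(s_1)$ (with equality forcing $s_1\le k$ and $D[S_1]=\overleftrightarrow{K_{s_1}}$), we get $a(D)\le\binom{n}{2}+g(s_1)+g(n-s_1)\le\binom{n}{2}+g(n)$, and tracing the equality conditions — $g(s_1)+g(n-s_1)=g(n)$, all $s_1(n-s_1)$ cross-pairs carrying an arc (necessarily $S_1\mapsto D-S_1$), $D-S_1\in\overrightarrow{\mathcal F_{n-s_1,k}}$, and the block sizes being $q$ copies of $k$ and one copy of $r$ — forces $D\in\overrightarrow{\mathcal F_{n,k}}$.

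Key Lemma: if $H$ is a strongly connected $\overrightarrow{C_{k+1}}$-free digraph on $s$ vertices, then $a(H)\le\binom{s}{2}+g(s)=a(\overrightarrow{F_{s,k}})$, with strict inequality once $s\ge k+1$. The strictness is exactly what prevents an extremal $D$ from having a strongly connected component of order larger than $k$, and is needed for the characterization. For $s\le k$ this is immediate: $a(H)\le s(s-1)=\binom{s}{2}+g(s)$, with equality iff $H=\overleftrightarrow{K_s}$. So assume $s\ge k+1$. If $H$ has no digon (no pair $u,v$ with $u\rightarrow v$ and $v\rightarrow u$) it is an oriented graph, whence $a(H)\le\binom{s}{2}<\binom{s}{2}+g(s)$ since $g(s)\ge 1$; and $H$ cannot be semicomplete, because a strongly connected semicomplete digraph on at least three vertices is vertex-pancyclic (Moon) and would contain $\overrightarrow{C_{k+1}}$. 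Thus the one case left is: $H$ strongly connected on $s\ge k+1$ vertices, possessing a digon but not semicomplete — and this is the main obstacle.

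For that case my plan is a longest-path/rotation argument. One takes a longest directed path $v_0\to v_1\to\cdots\to v_\ell$ in $H$ (or a longest path in the graph formed by the digons), returns from $v_\ell$ to $v_0$ using strong connectivity, and uses $\overrightarrow{C_{k+1}}$-freeness to forbid the corresponding closed walks of length $k+1$; quantifying the resulting restrictions on the in- and out-neighbourhoods near the path's ends should produce a vertex $v$ with $d^+(v)+d^-(v)\le s+r'-3$, strictly below the minimum total degree $s+r'-2$ of $\overrightarrow{F_{s,k}}$ (here $r'\in\{1,\dots,k\}$, $r'\equiv s\pmod k$). Deleting $v$ leaves a $\overrightarrow{C_{k+1}}$-free digraph on $s-1$ vertices, and the induction hypothesis together with the arithmetic identity $\binom{s}{2}-\binom{s-1}{2}+g(s)-g(s-1)=s+r'-2$ then yields $a(H)\le\binom{s-1}{2}+g(s-1)+(s+r'-3)=\binom{s}{2}+g(s)-1$, as required. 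The genuinely delicate points are which vertex to delete and — for $\mathrm{EX}(n,\overrightarrow{C_{k+1}})$ — recognising, when equality holds, that the digraph obtained by deleting the chosen vertex already lies in $\overrightarrow{\mathcal F}$ and determining exactly how that vertex must attach to rebuild a member of $\overrightarrow{\mathcal F_{n,k}}$; this bookkeeping (with the small-$r$ edge cases $r\in\{0,1\}$) is where the real work lies.
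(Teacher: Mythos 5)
Your lower bound and the arithmetic identities ($a(\overrightarrow{F_{n,k}})=\binom{n}{2}+g(n)$, superadditivity of $g$ with equality iff $k$ divides one summand) are all correct, and the decomposition of $D$ along a source strongly connected component $S_1$ of the condensation is a genuinely different route from the paper's. However, the proposal has a real gap: your whole induction hinges on the ``Key Lemma'' (a strongly connected $\overrightarrow{C_{k+1}}$-free digraph on $s\ge k+1$ vertices has strictly fewer than $\binom{s}{2}+g(s)$ arcs), and you prove it only in the two easy regimes (no digon; semicomplete, via Moon's vertex-pancyclicity). For the remaining case --- $H$ strongly connected, $s\ge k+1$, with a digon but not semicomplete --- you give a plan (longest-path rotations, find a vertex of total degree at most $s+r'-3$, delete and recurse) but no actual argument that such a vertex exists or that the equality bookkeeping closes. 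This is not a small loose end: that case is essentially the whole difficulty of the problem restricted to strongly connected digraphs, and nothing in your outline guarantees the low-degree vertex. As written, the proposal does not prove the theorem.

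It is worth comparing with what the paper does, because it sidesteps the Key Lemma entirely and is much lighter. The paper inducts on $n$ (and $k$), and at the inductive step observes that since $a(D)\ge f(n,k)>f(n,k-1)$, the digraph $D$ already contains a $\overrightarrow{C_k}$, say on vertex set $W$. Then for every $v\notin W$ there are at most $k$ arcs between $v$ and $W$: for each consecutive pair $u_i,u_{i+1}$ on the $k$-cycle, at most one of $u_i\to v$, $v\to u_{i+1}$ can hold (else a $\overrightarrow{C_{k+1}}$ appears), and summing these $k$ constraints cyclically gives the bound. Together with $a(D[W])\le k(k-1)$ and the induction hypothesis on $D-W$, this gives $a(D)\le k(k-1)+k(n-k)+f(n-k,k)=f(n,k)$, and the equality analysis (forcing $D[W]=\overleftrightarrow{K_k}$, each outside vertex joined to $W$ by exactly $k$ arcs all in one direction, and $D-W$ extremal) rebuilds $\overrightarrow{F_{n,k}}$ directly. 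No structure theory of strongly connected digraphs is needed. If you want to salvage your route you would still have to supply a full proof of the Key Lemma; alternatively, you could replace the condensation decomposition with the paper's ``peel off a $\overrightarrow{C_k}$'' step and keep the rest of your bookkeeping.
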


It is worth mentioning that for $\overrightarrow{C_2}$ and
$\overrightarrow{C_3}$, there are extremal graphs other than that in
$\overrightarrow{\mathcal{F}_{n,1}}$ and
$\overrightarrow{\mathcal{F}_{n,2}}$ (see~\cite{brown1970extremal}).
In addition, by Theorem~\ref{C_k}, we can easily obtain the
following corollary about Hamilton directed cycle.

\begin{corollary}\label{C_n}
Every digraph with $n\geq 2$ vertices and at least $n^2-2n+2$ arcs
contains a Hamilton directed cycle.
\end{corollary}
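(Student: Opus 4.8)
The plan is to derive Corollary~\ref{C_n} directly from Theorem~\ref{C_k} by specializing to the case $k+1 = n$, i.e., when the directed cycle in question is a Hamilton directed cycle of a digraph on $n$ vertices. Concretely, a digraph $D$ on $n$ vertices contains a Hamilton directed cycle if and only if it contains $\overrightarrow{C_n}$ as a subdigraph, so it suffices to show that $a(D) \geq n^2 - 2n + 2$ forces $\overrightarrow{C_n} \subseteq D$; equivalently, that ${\rm ex}(n, \overrightarrow{C_n}) \leq n^2 - 2n + 1$.

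First I would set up the parameters for Theorem~\ref{C_k}: we want the forbidden directed cycle to have $n$ vertices, so we take the ``$k+1$'' of that theorem to equal $n$, that is, $k = n-1$. Then write $n = q(n-1) + r$ with $0 \le r < n-1$. For $n \ge 2$ the only possibility is $q = 1$ and $r = 1$ (since $n = 1\cdot(n-1) + 1$, and $q\ge 2$ would give $q(n-1) \ge 2n-2 > n$ for $n\ge 3$, while $n=2$ gives $q=2,r=0$ as an alternative — I would handle $n=2$ as a trivial separate remark, where $n^2-2n+2 = 2$ arcs on $2$ vertices is exactly $\overleftrightarrow{K_2} = \overrightarrow{C_2}$). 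With $q=1$, $r=1$, $k = n-1 \ge 3$ (so $n \ge 4$; the cases $n=2,3$ I check by hand), Theorem~\ref{C_k} gives
\[
{\rm ex}(n, \overrightarrow{C_n}) = \frac{1}{2}n^2 + \frac{(n-1)-2}{2}n - \frac{1\cdot((n-1)-1)}{2} = \frac{1}{2}n^2 + \frac{n-3}{2}n - \frac{n-2}{2}.
\]
Then I would simplify the right-hand side: $\frac{1}{2}n^2 + \frac{n^2 - 3n}{2} - \frac{n-2}{2} = \frac{n^2 + n^2 - 3n - n + 2}{2} = \frac{2n^2 - 4n + 2}{2} = n^2 - 2n + 1$. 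Hence ${\rm ex}(n, \overrightarrow{C_n}) = n^2 - 2n + 1$, so any digraph on $n$ vertices with at least $n^2 - 2n + 2$ arcs is not $\overrightarrow{C_n}$-free, i.e., contains a Hamilton directed cycle.

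There is essentially no obstacle here: the whole content of the corollary is packed into Theorem~\ref{C_k}, and the proof is a one-line substitution $k = n-1$ followed by an arithmetic simplification. The only points requiring a modicum of care are (a) verifying that $k = n-1 \ge 3$ is needed for the ``extremal digraphs'' part of Theorem~\ref{C_k}, but for the corollary we only need the value of ${\rm ex}$, which holds for all $k,n \in \mathbb{N}^*$; and (b) checking the small cases $n = 2$ and $n = 3$ separately. For $n=3$ we have $k = 2$, and $\overrightarrow{C_3} = \overrightarrow{T_3}$, so Theorem~\ref{Brown-Harary} (or Theorem~\ref{C_k} with $k=2$, $q=1$, $r=1$) gives ${\rm ex}(3,\overrightarrow{C_3}) = \frac12\cdot 9 + 0 - \frac12 = 4 = 3^2 - 2\cdot 3 + 1$, consistent. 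For $n=2$, $\overrightarrow{C_2} = \overleftrightarrow{K_2}$ and ${\rm ex}(2,\overrightarrow{C_2}) = 1 = 2^2 - 2\cdot 2 + 1$. Thus in all cases $n \ge 2$ the bound $n^2 - 2n + 2$ on the number of arcs forces a Hamilton directed cycle, completing the proof.
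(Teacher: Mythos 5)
Your proof is correct and is exactly the route the paper intends: the authors state that Corollary~\ref{C_n} follows ``easily'' from Theorem~\ref{C_k} without spelling out the substitution, and you supply precisely that calculation, setting $k=n-1$ (so $q=1,\ r=1$ for $n\ge 3$) in the formula to get ${\rm ex}(n,\overrightarrow{C_n})=n^2-2n+1$, together with the routine check of the small cases $n=2,3$.
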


\subsubsection{The Tur\'an problem of orientations of $C_4$ }

Finally, we will complete the Tur\'an problem of all orientations of
$C_4$.
{There are four orientations for $C_4$, we denote by
$\overrightarrow{C_4}$, $\overrightarrow{K_{2,2}}$,
$\overrightarrow{P_{2,2}}$ and $\overrightarrow{P_{1,3}}$ (see
Figure~3).}

\begin{center}
\begin{picture}(220,60)
\thicklines \put(0,10){ \put(0,10){\circle*{3}}
\put(0,40){\circle*{3}} \put(30,40){\circle*{3}}
\put(30,10){\circle*{3}} \put(0,10){\line(1,0){30}}
\put(0,10){\line(0,1){30}} \put(30,40){\line(-1,0){30}}
\put(30,40){\line(0,-1){30}} \put(0,25){\vector(0,-1){5}}
\put(15,10){\vector(1,0){5}} \put(30,25){\vector(0,1){5}}
\put(15,40){\vector(-1,0){5}} \put(10,-7){$\overrightarrow{C_4}$}}

\put(60,10){ \put(0,10){\circle*{3}} \put(0,40){\circle*{3}}
\put(30,40){\circle*{3}} \put(30,10){\circle*{3}}
\textcolor{blue}{\put(0,10){\line(1,0){30}}}
\put(0,10){\line(0,1){30}}
\textcolor{blue}{\put(30,40){\line(-1,0){30}}}
\put(30,40){\line(0,-1){30}} \put(0,25){\vector(0,-1){5}}
\put(15,10){\vector(-1,0){5}} \put(30,25){\vector(0,1){5}}
\put(15,40){\vector(1,0){5}} \put(5,-7){$\overrightarrow{K_{2,2}}$}}

\put(120,10){ \put(0,10){\circle*{3}} \put(0,40){\circle*{3}}
\put(30,40){\circle*{3}} \put(30,10){\circle*{3}}
\textcolor{blue}{\put(0,10){\line(1,0){30}}
\put(0,10){\line(0,1){30}}} \put(30,40){\line(-1,0){30}}
\put(30,40){\line(0,-1){30}} \put(0,25){\vector(0,-1){5}}
\put(15,10){\vector(1,0){5}} \put(30,25){\vector(0,-1){5}}
\put(15,40){\vector(1,0){5}} \put(5,-7){$\overrightarrow{P_{2,2}}$}}

\put(180,10){ \put(0,10){\circle*{3}} \put(0,40){\circle*{3}}
\put(30,40){\circle*{3}} \put(30,10){\circle*{3}}
\put(0,10){\line(1,0){30}}
\textcolor{blue}{\put(0,10){\line(0,1){30}}}
\put(30,40){\line(-1,0){30}} \put(30,40){\line(0,-1){30}}
\put(0,25){\vector(0,-1){5}} \put(15,10){\vector(-1,0){5}}
\put(30,25){\vector(0,-1){5}} \put(15,40){\vector(1,0){5}}
\put(5,-7){$\overrightarrow{P_{1,3}}$}}
\end{picture}

\small Figure~3. The orientations of $C_4$.
\end{center}

As mentioned earlier, the Tur\'an problem of
$\overrightarrow{P_{2,2}}$ has been solved by Huang and Lyu
in~\cite{MR4058215} (for $n\geq 13$), and the Tur\'an number of
$\overrightarrow{C_4}$ can be obtained directly from
Theorem~\ref{C_k}. Now we deal with the Tur\'an problem of
$\overrightarrow{P_{1,3}}$ and $\overrightarrow{K_{2,2}}$.

From the following Theorems \ref{p13small} and \ref{p13}, one can
see that ${\rm ex}(n,\overrightarrow{P_{1,3}})=\max\{2{\rm
ex}(n,C_4),{\rm ex}(n,K_4)\}$.

\begin{theorem} \label{p13small}
For $n\in[1,8]$, ${\rm ex}(n,\overrightarrow{P_{1,3}})=2{\rm ex}(n,
C_4)$, and ${\rm
EX}(n,\overrightarrow{P_{1,3}})=\{\overleftrightarrow{H}: H\in {\rm
EX}(n,C_4)\}$.
\end{theorem}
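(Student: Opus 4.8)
The plan is to combine a one‑line construction for the lower bound with two structural lemmas about $\overrightarrow{P_{1,3}}$‑free digraphs, and then to finish by a finite case analysis that uses $n\le 8$ in an essential way.

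\emph{Lower bound.} For $H\in{\rm EX}(n,C_4)$ the digraph $\overleftrightarrow{H}$ has $2e(H)=2\,{\rm ex}(n,C_4)$ arcs, and it is $\overrightarrow{P_{1,3}}$‑free: a copy of $\overrightarrow{P_{1,3}}$ with arcs $x_0\to x_1\to x_2\to x_3$ and $x_0\to x_3$ would force $x_0x_1,x_1x_2,x_2x_3,x_3x_0\in E(H)$, i.e.\ a $4$‑cycle in $H$ on four distinct vertices, which is impossible. Hence ${\rm ex}(n,\overrightarrow{P_{1,3}})\ge 2\,{\rm ex}(n,C_4)$ and each $\overleftrightarrow{H}$ with $H\in{\rm EX}(n,C_4)$ is a candidate extremal digraph.

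\emph{Structural lemmas.} Let $D$ be any $\overrightarrow{P_{1,3}}$‑free digraph, put $G={\rm Ung}(D)$, let $G_2\subseteq G$ be the spanning subgraph consisting of the edges that carry a digon of $D$, and set $m=e(G_2)$, so that $a(D)=e(G)+m$. First I would prove (A): $G$ is $K_4$‑free. On the four vertices of a $K_4$ of $G$ the digraph $D$ is semicomplete, hence contains a tournament on $4$ vertices as a subdigraph; but each of the four tournaments on $4$ vertices contains $\overrightarrow{P_{1,3}}$ (a short check: in the transitive one, and in the ones with a dominating resp.\ a dominated vertex, a Hamilton directed path closes with a chord; in the strongly connected one, $v_2\to v_4\to v_1\to v_3$ together with the arc $v_2\to v_3$ works). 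By Theorem~\ref{K_k-free} this gives $e(G)\le e(T_{n,3})={\rm ex}(n,K_4)$. Next I would prove (B): no digon edge of $D$ lies on a $C_4$ of $G$. Indeed, if $uv$ carries a digon and there is a path on four vertices $v,x,y,u$ (in this order) in $G$ with $x,y$ distinct and outside $\{u,v\}$, then each of $vx,xy,yu$ is realised by at least one arc, and running through the at most eight patterns of chosen directions on these three edges one always finds a $\overrightarrow{P_{1,3}}$ inside $\{u,v,x,y\}$ — for instance if $v\to x$, $x\to y$, $y\to u$ all occur, then $v\to x\to y\to u$ with the digon‑arc $v\to u$ is such a copy, and every other pattern is dispatched the same way using one of the two digon‑arcs at $uv$. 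Consequently $G_2$ is itself $C_4$‑free (a $C_4$ in $G_2$ would be a $C_4$ of $G$ through its own edges), so $m\le{\rm ex}(n,C_4)$; in fact $G_2$ is contained in the $C_4$‑free subgraph $G^{\circ}$ of edges of $G$ that lie on no $C_4$. The same argument yields a sharpening I will want: if a vertex $u$ has two digon‑neighbours $v,w$, then $N_G(v)\cap N_G(w)=\{u\}$.

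\emph{Putting it together.} If $e(G)\le{\rm ex}(n,C_4)$, then $a(D)=e(G)+m\le 2e(G)\le 2\,{\rm ex}(n,C_4)$, with equality only when $m=e(G)={\rm ex}(n,C_4)$, i.e.\ every edge of $G$ carries a digon and $G\in{\rm EX}(n,C_4)$; that is, $D=\overleftrightarrow{G}$ with $G\in{\rm EX}(n,C_4)$. It then remains to show $a(D)<2\,{\rm ex}(n,C_4)$ whenever $e(G)>{\rm ex}(n,C_4)$ — in which case $G$ contains a $C_4$ and, by (A), $e(G)\le e(T_{n,3})$. This is precisely the main obstacle: the crude bounds give only $a(D)\le e(T_{n,3})+{\rm ex}(n,C_4)$, which for $n\le 8$ still exceeds $2\,{\rm ex}(n,C_4)$, so one has to capture the genuine trade‑off that a dense underlying graph containing a $C_4$ can carry only very few digons. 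Here $n\le 8$ enters: for $2\le n\le 8$ one has $e(T_{n,3})<2\,{\rm ex}(n,C_4)$, and since $m\le e(G^{\circ})$ while every edge of $G$ lying on a $C_4$ is automatically outside $G^{\circ}$, I would finish by a finite check — fixing $n\in\{4,\dots,8\}$ and arguing by the number $m$ of digons, using the restriction $N_G(v)\cap N_G(w)=\{u\}$ together with $K_4$‑freeness to bound how many digons can coexist with a $C_4$ in $G$ and with $e(G)$ edges — and in every such case $e(G)+m<2\,{\rm ex}(n,C_4)$; the cases $n\le 3$ are trivial since then $\overrightarrow{P_{1,3}}$ has too many vertices. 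Combining the two cases gives ${\rm ex}(n,\overrightarrow{P_{1,3}})=2\,{\rm ex}(n,C_4)$ and ${\rm EX}(n,\overrightarrow{P_{1,3}})=\{\overleftrightarrow{H}:H\in{\rm EX}(n,C_4)\}$ for $n\in[1,8]$. The lemmas are short; the hard part is this last, finite but fiddly, elimination of the dense‑underlying‑graph case.
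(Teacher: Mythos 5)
Your lower bound and the two structural facts are correct and match the paper's: the reduction via the underlying graph $G$, the observation that $G$ (or at least its non-digon part) is $K_4$-free, and the observation that digon edges lie on no $C_4$ of $G$ are exactly the content of the paper's Lemma~\ref{LeOrientation}, which they phrase as: every orientation of $K_4$ and of $\varTheta_{1,1,3}$ contains a $\overrightarrow{P_{1,3}}$.

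However, there is a genuine gap at precisely the point you flag as the ``main obstacle.'' You state that when $e(G)>{\rm ex}(n,C_4)$ one must show $a(D)=e(G)+m<2\,{\rm ex}(n,C_4)$, and you propose a case-by-case analysis over $n\in\{4,\dots,8\}$ and over $m$, but you do not carry it out; the claim that ``in every such case $e(G)+m<2\,{\rm ex}(n,C_4)$'' is asserted, not proved. That unperformed finite check is where essentially all of the difficulty of the theorem lives, and the sharpening $N_G(v)\cap N_G(w)=\{u\}$ alone does not obviously close it: you would still need to account for all $K_4$-free graphs $G$ with ${\rm ex}(n,C_4)<e(G)\le e(T_{n,3})$ together with admissible digon sets, and the interaction between $e(G)$ and $m$ is not captured by the crude bounds $e(G)\le e(T_{n,3})$ and $m\le {\rm ex}(n,C_4)$.

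The paper circumvents this by isolating a different, cleaner quantitative lemma (their Lemma~\ref{LeSubgraph}): for $n\le 8$, every non-empty $K_4$-free graph $G$ contains a $C_4$-free subgraph $H$ with $e(H)>e(G)/2$. With that in hand the theorem becomes a short replacement argument: writing $G_1$ for the non-digon edges and $G_2$ for the digon edges, if $G_1\ne\emptyset$ then $G_1$ has a $C_4$-free subgraph $H_1$ with $e(H_1)>e(G_1)/2$, and $H_1\cup G_2$ is still $C_4$-free (no $G_2$-edge lies on a $C_4$ of $G$), so $\overleftrightarrow{H_1\cup G_2}$ is a $\overrightarrow{P_{1,3}}$-free digraph with $2e(H_1)+2e(G_2)>e(G_1)+2e(G_2)=a(D)$ arcs, contradicting extremality; hence $G_1=\emptyset$ and $D=\overleftrightarrow{G}$ with $G$ extremal for $C_4$. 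The substantive work (the analogue of your ``finite but fiddly'' check) is then the proof of Lemma~\ref{LeSubgraph} itself, done via a structured case analysis on $(n,e(G))$ using, among other things, Fan's Hamiltonicity theorem, the Gy\H{o}ri--Keszegh edge-disjoint-triangles bound, and the Strong Perfect Graph Theorem. I would suggest you extract and prove an analogous ``more than half the edges in a $C_4$-free subgraph'' lemma rather than trying to directly bound $e(G)+m$ case by case; as written, your proof is incomplete at its central step.
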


Computer searches by Clapham et al.~\cite{ClaphamFlockhartSheehan}
and Yang and Rowlinson~\cite{1992On} determined ${\rm EX}(n,C_4)$
for all $n\le 31$. We list the $C_4$-free extremal graphs for all
$n$ up to $8$ (see Figure~4).

\begin{center}
\begin{picture}(435,355)
\thicklines \put(0,0){\line(1,0){435}} \put(0,0){\line(0,1){355}}
\put(435,0){\line(0,1){355}} \put(0,105){\line(1,0){435}}
\put(0,190){\line(1,0){435}} \put(0,270){\line(1,0){435}}
\put(0,355){\line(1,0){435}}

\put(15,10){${\rm{EX}}(8,C_4)$, ${\rm{ex}}(8,C_4)=11$}

\put(20,30){\put(20,10){\circle*{4}} \put(20,40){\circle*{4}}
\put(45,0){\circle*{4}} \put(45,50){\circle*{4}}
\put(60,25){\circle*{4}} \put(0,25){\circle*{4}}
\put(75,0){\circle*{4}} \put(75,50){\circle*{4}}
\put(20,10){\line(5,-2){25}} \put(45,0){\line(3,5){15}}
\put(60,25){\line(-3,5){15}} \put(45,50){\line(-5,-2){25}}
\put(20,40){\line(0,-1){30}} \put(0,25){\line(4,-3){20}}
\put(0,25){\line(4,3){20}} \put(75,0){\line(-1,0){30}}
\put(75,0){\line(-3,5){15}} \put(75,50){\line(-1,0){30}}
\put(75,50){\line(-3,-5){15}} }

\put(115,30){\put(0,10){\circle*{4}} \put(0,40){\circle*{4}}
\put(25,0){\circle*{4}} \put(25,50){\circle*{4}}
\put(40,25){\circle*{4}} \put(55,0){\circle*{4}}
\put(55,50){\circle*{4}} \put(0,65){\circle*{4}}
\put(0,10){\line(5,-2){25}} \put(25,0){\line(3,5){15}}
\put(40,25){\line(-3,5){15}} \put(25,50){\line(-5,-2){25}}
\put(0,40){\line(0,-1){30}} \put(55,0){\line(-1,0){30}}
\put(55,0){\line(-3,5){15}} \put(55,50){\line(-1,0){30}}
\put(55,50){\line(-3,-5){15}} \put(0,65){\line(0,-1){30}}
\put(0,65){\line(5,-3){25}} }

\put(190,30){\put(0,25){\circle*{4}} \put(15,0){\circle*{4}}
\put(15,50){\circle*{4}} \put(40,10){\circle*{4}}
\put(40,40){\circle*{4}} \put(65,0){\circle*{4}}
\put(65,50){\circle*{4}} \put(80,25){\circle*{4}}

\put(0,25){\line(3,-5){15}} \put(0,25){\line(3,5){15}}
\put(15,0){\line(5,2){25}} \put(15,50){\line(5,-2){25}}
\put(40,10){\line(5,-2){25}} \put(65,0){\line(3,5){15}}
\put(80,25){\line(-3,5){15}} \put(65,50){\line(-5,-2){25}}
\put(40,40){\line(0,-1){30}} \put(15,0){\line(1,0){50}}
\put(15,50){\line(1,0){50}} }

\put(290,30){\put(0,0){\circle*{4}} \put(40,0){\circle*{4}}
\put(20,15){\circle*{4}} \put(0,30){\circle*{4}}
\put(20,32.5){\circle*{4}} \put(40,30){\circle*{4}}
\put(20,50){\circle*{4}} \put(60,15){\circle*{4}}
\put(0,0){\line(1,0){40}} \put(20,15){\line(-4,-3){20}}
\put(20,15){\line(4,-3){20}} \put(0,0){\line(0,1){30}}
\put(20,15){\line(0,1){35}} \put(40,0){\line(0,1){30}}
\put(20,50){\line(-1,-1){20}} \put(20,50){\line(1,-1){20}}
\put(40,0){\line(4,3){20}} \put(40,30){\line(4,-3){20}} }

\put(370,30){\put(0,0){\circle*{4}} \put(40,0){\circle*{4}}
\put(20,15){\circle*{4}} \put(0,30){\circle*{4}}
\put(20,32.5){\circle*{4}} \put(40,30){\circle*{4}}
\put(20,50){\circle*{4}} \put(45,55){\circle*{4}}
\put(0,0){\line(1,0){40}} \put(20,15){\line(-4,-3){20}}
\put(20,15){\line(4,-3){20}} \put(0,0){\line(0,1){30}}
\put(20,15){\line(0,1){35}} \put(40,0){\line(0,1){30}}
\put(20,50){\line(-1,-1){20}} \put(20,50){\line(1,-1){20}}
\put(40,30){\line(1,5){5}} \put(20,50){\line(5,1){25}} }

\put(15,115){${\rm{EX}}(7,C_4)$, ${\rm{ex}}(7,C_4)=9$}

\put(20,135){\put(0,20){\circle*{4}} \put(30,20){\circle*{4}}
\put(60,20){\circle*{4}} \put(15,0){\circle*{4}}
\put(45,0){\circle*{4}} \put(15,40){\circle*{4}}
\put(45,40){\circle*{4}} \put(15,40){\line(1,0){30}}
\put(0,20){\line(1,0){60}} \put(0,20){\line(3,-4){15}}
\put(15,40){\line(3,-4){30}} \put(45,0){\line(3,4){15}}
\put(15,0){\line(3,4){30}} }

\put(100,140){\put(0,0){\circle*{4}} \put(0,30){\circle*{4}}
\put(20,15){\circle*{4}} \put(50,15){\circle*{4}}
\put(70,0){\circle*{4}} \put(70,30){\circle*{4}}
\put(35,35){\circle*{4}} \put(0,0){\line(4,3){20}}
\put(0,0){\line(0,1){30}} \put(0,30){\line(4,-3){20}}
\put(20,15){\line(1,0){30}} \put(50,15){\line(4,3){20}}
\put(50,15){\line(4,-3){20}} \put(70,0){\line(0,1){30}}
\put(20,15){\line(3,4){15}} \put(50,15){\line(-3,4){15}} }

\put(190,130){\put(20,10){\circle*{4}} \put(20,40){\circle*{4}}
\put(45,0){\circle*{4}} \put(45,50){\circle*{4}}
\put(60,25){\circle*{4}} \put(0,25){\circle*{4}}
\put(75,0){\circle*{4}} \put(20,10){\line(5,-2){25}}
\put(45,0){\line(3,5){15}} \put(60,25){\line(-3,5){15}}
\put(45,50){\line(-5,-2){25}} \put(20,40){\line(0,-1){30}}
\put(0,25){\line(4,-3){20}} \put(0,25){\line(4,3){20}}
\put(75,0){\line(-1,0){30}} \put(75,0){\line(-3,5){15}} }

\put(285,130){\put(0,10){\circle*{4}} \put(0,40){\circle*{4}}
\put(25,0){\circle*{4}} \put(25,50){\circle*{4}}
\put(40,25){\circle*{4}} \put(55,0){\circle*{4}}
\put(55,50){\circle*{4}} \put(0,10){\line(5,-2){25}}
\put(25,0){\line(3,5){15}} \put(40,25){\line(-3,5){15}}
\put(25,50){\line(-5,-2){25}} \put(0,40){\line(0,-1){30}}
\put(55,0){\line(-1,0){30}} \put(55,0){\line(-3,5){15}}
\put(55,50){\line(-1,0){30}} \put(55,50){\line(-3,-5){15}} }

\put(360,130){\put(0,0){\circle*{4}} \put(40,0){\circle*{4}}
\put(20,15){\circle*{4}} \put(0,30){\circle*{4}}
\put(20,32.5){\circle*{4}} \put(40,30){\circle*{4}}
\put(20,50){\circle*{4}} \put(0,0){\line(1,0){40}}
\put(20,15){\line(-4,-3){20}} \put(20,15){\line(4,-3){20}}
\put(0,0){\line(0,1){30}} \put(20,15){\line(0,1){35}}
\put(40,0){\line(0,1){30}} \put(20,50){\line(-1,-1){20}}
\put(20,50){\line(1,-1){20}} }

\put(15,200){${\rm{EX}}(6,C_4)$, ${\rm{ex}}(6,C_4)=7$}

\put(20,220){\put(0,0){\circle*{4}} \put(0,30){\circle*{4}}
\put(20,15){\circle*{4}} \put(40,0){\circle*{4}}
\put(40,30){\circle*{4}} \put(20,35){\circle*{4}}
\put(0,0){\line(4,3){20}} \put(0,0){\line(0,1){30}}
\put(0,30){\line(4,-3){20}} \put(20,15){\line(4,3){20}}
\put(20,15){\line(4,-3){20}} \put(40,0){\line(0,1){30}}
\put(20,15){\line(0,1){20}} }

\put(80,220){\put(0,0){\circle*{4}} \put(0,30){\circle*{4}}
\put(20,15){\circle*{4}} \put(40,0){\circle*{4}}
\put(40,30){\circle*{4}} \put(60,0){\circle*{4}}
\put(0,0){\line(4,3){20}} \put(0,0){\line(0,1){30}}
\put(0,30){\line(4,-3){20}} \put(20,15){\line(4,3){20}}
\put(20,15){\line(4,-3){20}} \put(40,0){\line(0,1){30}}
\put(40,0){\line(1,0){20}} }

\put(160,220){\put(0,0){\circle*{4}} \put(0,30){\circle*{4}}
\put(20,15){\circle*{4}} \put(40,15){\circle*{4}}
\put(60,0){\circle*{4}} \put(60,30){\circle*{4}}
\put(0,0){\line(4,3){20}} \put(0,0){\line(0,1){30}}
\put(0,30){\line(4,-3){20}} \put(20,15){\line(1,0){20}}
\put(40,15){\line(4,3){20}} \put(40,15){\line(4,-3){20}}
\put(60,0){\line(0,1){30}} }

\put(240,210){\put(20,10){\circle*{4}} \put(20,40){\circle*{4}}
\put(45,0){\circle*{4}} \put(45,50){\circle*{4}}
\put(60,25){\circle*{4}} \put(0,25){\circle*{4}}
\put(20,10){\line(5,-2){25}} \put(45,0){\line(3,5){15}}
\put(60,25){\line(-3,5){15}} \put(45,50){\line(-5,-2){25}}
\put(20,40){\line(0,-1){30}} \put(0,25){\line(4,-3){20}}
\put(0,25){\line(4,3){20}} }

\put(15,295){${\rm{EX}}(1,C_4)$,} \put(15,280){${\rm{ex}}(1,C_4)=0$}
\put(85,270){\line(0,1){85}}

\put(42.5,330){\put(0,0){\circle*{4}} }

\put(100,295){${\rm{EX}}(2,C_4)$,}
\put(100,280){${\rm{ex}}(2,C_4)=1$} \put(170,270){\line(0,1){85}}

\put(112.5,330){\put(0,0){\circle*{4}} \put(30,0){\circle*{4}}
\put(0,0){\line(1,0){30}} }

\put(185,295){${\rm{EX}}(3,C_4)$,}
\put(185,280){${\rm{ex}}(3,C_4)=3$} \put(255,270){\line(0,1){85}}

\put(197.5,320){\put(0,0){\circle*{4}} \put(30,0){\circle*{4}}
\put(15,20){\circle*{4}} \put(0,0){\line(3,4){15}}
\put(0,0){\line(1,0){30}} \put(30,0){\line(-3,4){15}} }

\put(270,295){${\rm{EX}}(4,C_4)$,}
\put(270,280){${\rm{ex}}(4,C_4)=4$} \put(345,270){\line(0,1){85}}

\put(280,315){\put(0,0){\circle*{4}} \put(0,30){\circle*{4}}
\put(20,15){\circle*{4}} \put(40,15){\circle*{4}}
\put(0,0){\line(4,3){20}} \put(0,0){\line(0,1){30}}
\put(0,30){\line(4,-3){20}} \put(20,15){\line(1,0){20}} }

\put(360,295){${\rm{EX}}(5,C_4)$,}
\put(360,280){${\rm{ex}}(5,C_4)=6$}

\put(370,315){\put(0,0){\circle*{4}} \put(0,30){\circle*{4}}
\put(20,15){\circle*{4}} \put(40,0){\circle*{4}}
\put(40,30){\circle*{4}} \put(0,0){\line(4,3){20}}
\put(0,0){\line(0,1){30}} \put(0,30){\line(4,-3){20}}
\put(20,15){\line(4,3){20}} \put(20,15){\line(4,-3){20}}
\put(40,0){\line(0,1){30}} }
\end{picture}

\small Figure~4. Extremal graphs for $C_4$ with $n\in [1,8]$.
\end{center}

For $n\geq 9$, we first define a class of
$\overrightarrow{P_{1,3}}$-free digraphs on $n$ vertices, namely
\textit{symmetric multipartite digraph}, denoted by
$\overrightarrow{S_n}$, which satisfies the following properties
(see Figure~5):
\begin{compactenum}[\rm (i)]
  \item $V(\overrightarrow{S_n})$ has an almost balanced partition $\{X, Y, Z\}$;\label{P13:a}
  \item $X$, $Y$ and $Z$ are independent sets of $\overrightarrow{S_n}$;
  \item $X$, $Y$, $Z$, respectively, has a partition $\{X_1, X_2\}$, $\{Y_1,
  Y_2\}$, $\{Z_1, Z_2\}$ (possibly $X_i, Y_j$ or $Z_k=\emptyset$);
  \item $X_1\mapsto Y_1\cup Z_2$, $Y_1\mapsto Z_1\cup X_2$, $Z_1\mapsto X_1\cup Y_2$,
  $X_2\mapsto Y_2\cup Z_1$, $Y_2\mapsto Z_2\cup X_1$, $Z_2\mapsto X_2\cup
  Y_1$. \label{P13:b}
\end{compactenum}
\noindent{\bf Remark:} There is no relationship between the size of
$X_1$ and $X_2$, and the same for $Y_1$ and $Y_2$, $Z_1$ and $Z_2$.
Note that there may be more than one non-isomorphic
$\overrightarrow{S_n}$ for given $n$. However, all
$\overrightarrow{S_n}$ have the same arc number, and the underlying
multigraph of $\overrightarrow{S_n}$ is only the Tur\'an graph
$T_{n,3}$. We denote by $\overrightarrow{\mathcal{S}_n}$ the set of
all $\overrightarrow{S_n}$.

\begin{center}
\begin{picture}(180,160)
\thicklines

\put(100,135){\line(2,-1){40}} \put(96,131){\line(2,-3){48}}
\put(150,99){\line(0,-1){38}} \put(144,101){\line(-2,-3){48}}
\put(140,45){\line(-2,-1){40}} \put(139,50){\line(-1,0){98}}
\put(80,25){\line(-2,1){40}} \put(84,29){\line(-2,3){48}}
\put(30,61){\line(0,1){38}} \put(36,59){\line(2,3){48}}
\put(40,115){\line(2,1){40}} \put(41,110){\line(1,0){98}}

\put(120,125){\vector(2,-1){5}} \put(120,95){\vector(2,-3){5}}
\put(150,80){\vector(0,-1){5}} \put(120,65){\vector(-2,-3){5}}
\put(120,35){\vector(-2,-1){5}} \put(90,50){\vector(-1,0){5}}
\put(60,35){\vector(-2,1){5}} \put(60,65){\vector(-2,3){5}}
\put(30,80){\vector(0,1){5}} \put(60,95){\vector(2,3){5}}
\put(60,125){\vector(2,1){5}} \put(90,110){\vector(1,0){5}}

\put(90,140){\circle{24}} \put(150,110){\circle{24}}
\put(150,50){\circle{24}} \put(90,20){\circle{24}}
\put(30,50){\circle{24}} \put(30,110){\circle{24}}

\put(84,136){$X_1$} \put(144,106){$Z_2$} \put(144,46){$Y_1$}
\put(84,16){$X_2$} \put(24,46){$Z_1$} \put(24,106){$Y_2$}
\end{picture}

\small Figure~5. Digraph $\overrightarrow{S_n}$.
\end{center}

\begin{theorem}\label{p13}
For all $n\ge 9$,
\[
{\rm
ex}(n,\overrightarrow{P_{1,3}})=a(\overrightarrow{S_n})=\left\lfloor\frac{n^2}{3}\right\rfloor.
\]
Furthermore,
${\rm{EX}}(n,\overrightarrow{P_{1,3}})=\overrightarrow{\mathcal{S}_n}$.
\end{theorem}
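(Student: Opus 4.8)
The plan is to prove the lower bound by displaying the construction and to obtain the upper bound together with the characterization by showing that every $\overrightarrow{P_{1,3}}$-free digraph $D$ on $n\ge 9$ vertices with $a(D)\ge a(\overrightarrow{S_n})$ is isomorphic to some $\overrightarrow{S_n}$. For the lower bound one checks directly that $\overrightarrow{S_n}$ is $\overrightarrow{P_{1,3}}$-free: since $X,Y,Z$ are independent, every directed walk alternates among the three parts, and a short case analysis over the six ``cells'' $X_i,Y_j,Z_k$ (exploiting the obvious cyclic symmetry rotating $X\to Y\to Z\to X$) shows that whenever $p\rightarrow q$ there is no directed path of length $3$ from $p$ to $q$ on four distinct vertices. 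Counting arcs, $a(\overrightarrow{S_n})=|X||Y|+|Y||Z|+|Z||X|=e(T_{n,3})=\lfloor n^2/3\rfloor$ because each unordered pair between distinct parts carries exactly one arc.

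For the upper bound the first step is the purely ``undirected'' observation that a $\overrightarrow{P_{1,3}}$-free digraph cannot contain $K_4$ in its underlying graph. Indeed, if $\{x_1,x_2,x_3,x_4\}$ spans a $K_4$ in $\mathrm{Ung}(D)$, then either some vertex dominates (or is dominated by) the other three — in which case a directed Hamilton path of the sub-digraph on the remaining three vertices (Theorem~\ref{H-path}, or trivially if a digon is present) completes a $\overrightarrow{P_{1,3}}$ — or else every vertex has out- and in-degree at most $2$ inside the $4$-set, leaving only finitely many configurations (at most two digons among the six pairs, including $\overleftrightarrow{K_4}$ itself), each of which one checks to contain $\overrightarrow{P_{1,3}}$. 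Hence $\mathrm{Ung}(D)$ is $K_4$-free, so by Theorem~\ref{K_k-free} we get $e(\mathrm{Ung}(D))\le e(T_{n,3})=\lfloor n^2/3\rfloor$, with equality only if $\mathrm{Ung}(D)=T_{n,3}$. Writing $a(D)=e(\mathrm{Ung}(D))+(\text{number of digons of }D)$, it remains to show that an extremal $D$ has no digons.

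The second step — eliminating digons — is the one I expect to be the main obstacle. Two facts guide it: the ``digon graph'' of $D$ (edges $=$ digons) is $C_4$-free, since $\overleftrightarrow{C_4}=\overleftrightarrow{K_{2,2}}$ already contains a $\overrightarrow{P_{1,3}}$ (take $a_1\rightarrow b_1\rightarrow a_2\rightarrow b_2$ together with $a_1\rightarrow b_2$); and a digon-triangle $\{u,v,w\}$ cannot have any further vertex adjacent in $\mathrm{Ung}(D)$ to two of $u,v,w$ (else a $K_4$ minus an edge carrying a digon-triangle appears, which one checks contains $\overrightarrow{P_{1,3}}$). More quantitatively, starting from a digon $u\leftrightarrow v$ one uses that $N^+(u)\setminus\{v\}$ admits no arc to $N^-(v)\setminus\{u\}$ (otherwise $u\rightarrow c\rightarrow d\rightarrow v$ with $u\rightarrow v$ is a $\overrightarrow{P_{1,3}}$), and analyses $V\setminus\{u,v\}$ according to adjacency to $u$ and to $v$; for $n\ge 9$ the resulting bound on $a(D)$ is strictly below $\lfloor n^2/3\rfloor$, contradicting extremality. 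Thus an extremal $D$ is digon-free, so it is an orientation of the $K_4$-free graph $\mathrm{Ung}(D)$ with $a(D)=e(\mathrm{Ung}(D))=\lfloor n^2/3\rfloor$, which forces $\mathrm{Ung}(D)=T_{n,3}$ with parts $X,Y,Z$ of almost equal size (each at least $3$).

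Finally I would show that a $\overrightarrow{P_{1,3}}$-free orientation $D$ of $T_{n,3}$ lies in $\overrightarrow{\mathcal{S}_n}$. The engine is a rigidity claim: for $x_1,x_2\in X$, if $x_1\rightarrow p\rightarrow x_2$ for some $p\notin X$, then — combining ``$z\rightarrow x_1,\ z\rightarrow x_2$ would force $\overrightarrow{P_{1,3}}$'' with ``$x_2\rightarrow z\rightarrow x_1$ cannot be completed'' for $z$ in another part, and then iterating — $x_1$ and $x_2$ have exactly complementary neighbourhoods in $V\setminus X$; if no such $p$ exists they have identical neighbourhoods in $V\setminus X$. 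Hence the vertices of $X$ fall into at most two classes $X_1,X_2$ with complementary external neighbourhoods, and likewise $Y=Y_1\cup Y_2$, $Z=Z_1\cup Z_2$; moreover each class behaves uniformly toward every other class, so the arcs between any two parts follow one of a few patterns. A further application of $\overrightarrow{P_{1,3}}$-freeness to cycles meeting all three parts (e.g. $x_1\rightarrow y_1\rightarrow z_0\rightarrow y_2$ with $x_1\rightarrow y_2$) rules out the ``wrong'' patterns and pins the six classes into precisely the cyclic linkage of Figure~5, identifying $D$ as an element of $\overrightarrow{\mathcal{S}_n}$; the degenerate cases in which some $X_i$, $Y_j$ or $Z_k$ is empty are checked separately and also yield (degenerate) members of $\overrightarrow{\mathcal{S}_n}$. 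Together with the lower bound this gives ${\rm ex}(n,\overrightarrow{P_{1,3}})=\lfloor n^2/3\rfloor$ and ${\rm EX}(n,\overrightarrow{P_{1,3}})=\overrightarrow{\mathcal{S}_n}$.
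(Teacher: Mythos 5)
Your proposal takes a genuinely different route from the paper. The paper proves Theorem~\ref{p13} by induction on $n$ with a carefully handled base case $n=9$: it first determines the exact degree sequence in a putative extremal digraph by comparing to ${\rm ex}(8,\overrightarrow{P_{1,3}})$, then rules out digons using Lemma~\ref{LeSubgraph} (every $K_4$-free graph on at most $8$ vertices has a $C_4$-free subgraph with more than half the edges), and finally deduces the $\overrightarrow{S_9}$-structure; for $n>9$ it deletes a minimum-degree vertex and applies the inductive characterization of $\overrightarrow{\mathcal{S}_{n-1}}$, which silently disposes of the digon question. You instead attempt a non-inductive argument: show $\mathrm{Ung}(D)$ is $K_4$-free, kill all digons by a direct counting argument, and then rigidly classify $\overrightarrow{P_{1,3}}$-free orientations of $T_{n,3}$.

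The serious gap is your digon-elimination step. You assert that from the constraints you list (the digon graph is $C_4$-free; no arc from $N^+(u)\setminus\{v\}$ to $N^-(v)\setminus\{u\}$; etc.) one can bound $a(D)$ strictly below $\lfloor n^2/3\rfloor$ ``for $n\geq 9$,'' but no such bound is derived, and this is precisely where the difficulty lies. Note, for instance, that a digon $u\leftrightarrow v$ combined with two non-adjacent common neighbours does \emph{not} automatically produce a $\varTheta_{1,1,3}$ or a $\overrightarrow{P_{1,3}}$, so the constraints are weaker than they first appear, and one needs something like a stability argument pushing $\mathrm{Ung}(D)$ toward $T_{n,3}$ and then exploiting that every cross-edge of $T_{n,3}$ lies in a $C_4$. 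Making that quantitative already at $n=9$ is exactly why the paper proves and invokes Lemma~\ref{LeSubgraph}, whose proof occupies a substantial portion of Section~\ref{Sec-4-cycle}. As written, your ``for $n\ge 9$ the resulting bound on $a(D)$ is strictly below $\lfloor n^2/3\rfloor$'' is an unproved claim, not a lemma, and I do not see that the sketch you give supports it for small $n$. A secondary, lesser issue: your Step~3 ``rigidity claim'' (complementary or identical external neighbourhoods, then pinning the six cells) is plausible but only sketched; the paper's corresponding step (fixing $x_0,y_0,z_0$ and using their $\pm$-neighbourhoods to define the cells) is comparably terse, so I would not count this against you, but it would need to be carried out to make the argument complete.
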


For $\overrightarrow{K_{2,2}}$, we are surprised to find that to
calculate the Tur\'an number of $\overrightarrow{K_{2,2}}$ is as
difficult as to $C_4$ in bipartite graphs. Given a bipartite graph
$H$, let ${\rm ex}(n,n;H)$ denote the maximum number of edges in a
$H$-free bipartite graph with partition sets of equal size $n$; and
let ${\rm EX}(n,n;H)$ denote the set of all the $H$-free bipartite
graph with ${\rm ex}(n,n;H)$ edges and with partition sets of equal
size $n$.

Given a digraph $D$ with $V(D)=\{v_i: i\in[1,n]\}$, we define the
\emph{bipartite representation} of $D$ as a bipartite graph $B$ with
partition sets $X=\{x_i: i\in[1,n]\}$ and $Y=\{y_i: i\in[1,n]\}$
such that $x_iy_j\in E(B)$ if and only if $v_iv_j\in A(D)$. We
denote by ${\rm Brp}(D)$ the bipartite representation of $D$.

\begin{theorem}\label{anticycle}
For $n\in \mathbb{N}^*$ with $n\ge 3$,
\[
{\rm ex}(n,\overrightarrow{K_{2,2}})={\rm ex}(n,n;C_4).
\]
{Furthermore, ${\rm{EX}}(n,\overrightarrow{K_{2,2}})=\{D: {\rm
Brp}(D)\in {\rm{EX}}(n,n; C_4)\}$.}
\end{theorem}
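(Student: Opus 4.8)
The plan is to exploit the correspondence $D\mapsto{\rm Brp}(D)$ between digraphs on $n$ vertices and bipartite graphs with both parts of size $n$, and to reduce the whole statement to a fact about perfect matchings in the complements of extremal $C_4$-free bipartite graphs. First note that $\overrightarrow{K_{2,2}}$ is the digraph on four vertices $u_1,u_2,v_1,v_2$ whose arc set is $\{u_iv_j:i,j\in\{1,2\}\}$, so a digraph $D$ contains $\overrightarrow{K_{2,2}}$ exactly when it has four distinct vertices $u_1,u_2,v_1,v_2$ with all four arcs $u_1v_1,u_1v_2,u_2v_1,u_2v_2$. I claim $D$ is $\overrightarrow{K_{2,2}}$-free if and only if ${\rm Brp}(D)$ is $C_4$-free. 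A copy of $C_4$ in ${\rm Brp}(D)$ lives on vertices $x_a,x_c\in X$ and $y_b,y_d\in Y$ with $a\neq c$, $b\neq d$ and all four edges $x_ay_b,x_cy_b,x_cy_d,x_ay_d$; by definition of ${\rm Brp}$ the arcs $v_av_b,v_cv_b,v_cv_d,v_av_d$ all lie in $A(D)$, and since $D$ has no loops each such arc forces its two indices to differ, so $a\notin\{b,d\}$ and $c\notin\{b,d\}$. Thus $a,b,c,d$ are pairwise distinct and $v_a,v_c,v_b,v_d$ form a copy of $\overrightarrow{K_{2,2}}$; conversely any copy of $\overrightarrow{K_{2,2}}$ in $D$ obviously gives a $C_4$ in ${\rm Brp}(D)$. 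The one subtle point is precisely this: without loop-freeness of $D$, a $4$-cycle of ${\rm Brp}(D)$ need not span four distinct indices.

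Since $a(D)=e({\rm Brp}(D))$ and ${\rm Brp}(D)$ is a bipartite graph with both parts of size $n$, the claim immediately gives ${\rm ex}(n,\overrightarrow{K_{2,2}})\le{\rm ex}(n,n;C_4)$. It remains to prove the reverse inequality, i.e., to produce a $\overrightarrow{K_{2,2}}$-free digraph $D$ on $n$ vertices with $a(D)={\rm ex}(n,n;C_4)$. The only obstruction is that ${\rm Brp}(D)$ never contains an edge $x_iy_i$: so I need some $B\in{\rm EX}(n,n;C_4)$ whose edge set, after a relabelling of one side, avoids the perfect matching $\{x_iy_i:i\in[1,n]\}$; equivalently, the bipartite complement $\overline{B}:=K_{n,n}\setminus E(B)$ should have a perfect matching.

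This I would get from Hall's theorem applied to $\overline{B}$ with its natural bipartition into $X$ and $Y$. If some $S\subseteq X$ violated Hall's condition, then at least $n-|S|+1$ vertices of $Y$ would be $B$-adjacent to every vertex of $S$; when $2\le|S|\le n-1$ this produces two vertices of $X$ with two common $B$-neighbours, a $C_4$ in $B$ --- contradiction; and the boundary cases $|S|=1$ and $|S|=n$ force a vertex of degree $n$ in $B$, after which $C_4$-freeness bounds $e(B)$ by $2n-1$, contradicting $e(B)={\rm ex}(n,n;C_4)\ge 2n$ (the $2n$-cycle is $C_4$-free for $n\ge 3$). So $\overline{B}$ has a perfect matching $M$; relabelling $Y$ so that $M=\{x_iy_i:i\in[1,n]\}$ and then setting $v_iv_j\in A(D)$ exactly when $x_iy_j\in E(B)$ defines a loopless digraph $D$ with ${\rm Brp}(D)=B$, hence $a(D)={\rm ex}(n,n;C_4)$ and $D$ is $\overrightarrow{K_{2,2}}$-free. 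This yields ${\rm ex}(n,\overrightarrow{K_{2,2}})={\rm ex}(n,n;C_4)$, and the extremal families now match on both sides: if $D\in{\rm EX}(n,\overrightarrow{K_{2,2}})$ then ${\rm Brp}(D)$ is $C_4$-free with $e({\rm Brp}(D))=a(D)={\rm ex}(n,n;C_4)$, so ${\rm Brp}(D)\in{\rm EX}(n,n;C_4)$; and if ${\rm Brp}(D)\in{\rm EX}(n,n;C_4)$ then $D$ is $\overrightarrow{K_{2,2}}$-free with $a(D)={\rm ex}(n,n;C_4)={\rm ex}(n,\overrightarrow{K_{2,2}})$, so $D\in{\rm EX}(n,\overrightarrow{K_{2,2}})$.

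The substantive points are the loop-freeness subtlety in the dictionary between digraphs and bipartite graphs, and the realizability step (the perfect matching in the bipartite complement); the degree bound the latter needs is routine once one recalls that extremal $C_4$-free bipartite graphs have only about $n^{3/2}$ edges, and everything else is bookkeeping with the definitions of ${\rm ex}$ and ${\rm EX}$.
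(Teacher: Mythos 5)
Your proposal is correct and takes essentially the same route as the paper: both directions are proved via the bipartite representation ${\rm Brp}(D)$, and the reverse inequality is established by showing via Hall's theorem that the bipartite complement of an extremal $B\in{\rm EX}(n,n;C_4)$ has a perfect matching (splitting into the case where two vertices share $\ge 2$ common $B$-neighbours, giving a $C_4$, and the case of a degree-$n$ vertex, giving $e(B)\le 2n-1<2n\le{\rm ex}(n,n;C_4)$). You spell out the loop-freeness subtlety in the dictionary---that a $C_4$ in ${\rm Brp}(D)$ automatically uses four distinct indices---a bit more explicitly than the paper, which simply asserts the equivalence; otherwise the arguments match step for step.
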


The paper is organized as follows. The results of directed paths are
proved in Section~\ref{Sec-path}. The results of directed cycles are
proved in Section~\ref{Sec-cycle}. In Section~\ref{Sec-4-cycle}, we
prove the results of $\overrightarrow{P_{1,3}}$ and
$\overrightarrow{K_{2,2}}$. Finally, we close the paper with some
remarks and conjectures in Section~\ref{Sec-re.}.

\section{The proofs for directed paths}\label{Sec-path}

In this section, we prove Theorem \ref{3-path} and Theorem
\ref{P_k}. Corollary~\ref{P_k-upper-bound} can be directly obtained
from the proof of Theorem~\ref{P_k} (see bellow). From now on, let
$n,k\in \mathbb{N}^*$ with $k\geq 2$, $n=qk+r,\ 0\leq r<k$.

\begin{proof}[{\bf Proof of Theorem~\ref{3-path}}]
Let $D\in {\rm EX}(n,\overrightarrow{P_{3}})$, and let $\mathcal{C}$
denote the set of all $\overrightarrow{C_2}$ in $D$,
$t=|\mathcal{C}|$. Since $D$ is $\overrightarrow{P_3}$-free, every
$\overrightarrow{C_2}$ is a component of $D$. Let $D'$ be the
digraph obtained from $D$ by removing all the
$\overrightarrow{C_2}$-components. Then ${\rm Umg}(D')$ is a simple
graph. Moreover, ${\rm Umg}(D')$ is also $K_{3}$-free by
Theorem~\ref{H-path}. Hence $a(D')\leq {\rm
ex}(|V(D')|,K_3)=\lfloor{\frac{(n-2t)^2}{4}}\rfloor$ by
Theorem~\ref{K_k-free}. Therefore, for $n\geq 3$, we obtain that
\[
a(D)\leq 2t+\left\lfloor{\frac{(n-2t)^2}{4}}\right\rfloor\leq
\left\lfloor{\frac{n^2}{4}}\right\rfloor.
\]
Note that the first equality holds only if $a(D')={\rm
ex}(|V(D')|,K_3)$, and the second equality holds only if $t=0$ for
$n\geq 3$, or $t=\lfloor\frac{n}{2}\rfloor$ for $n=3,4$.
Furthermore, when $t=0$, then ${\rm Umg}(D)$ is the $K_{3}$-free
extremal graph $T_{n,2}$. It is easy to see that an orientation of
$T_{n,2}$ is $\overrightarrow{P_3}$-free if and only if it is a
transitive orientation. If $n\in \{3,4\}$ and
$t=\lfloor\frac{n}{2}\rfloor$, then clearly
$D=\overleftrightarrow{\varGamma_{n,2}}$. Thus we have ${\rm
EX}(n,\overrightarrow{P_3})=\overrightarrow{\mathcal{T}_{n,2}} \cup
\{\overleftrightarrow{\varGamma_{n,2}} \}$ for $n\in \{3,4\}$, and
${\rm
EX}(n,\overrightarrow{P_3})=\overrightarrow{\mathcal{T}_{n,2}}$ for
$n\geq 5$.
\end{proof}

\begin{proof}[{\bf Proof of Theorem~\ref{P_k}}]
From now on, let $D\in {\rm EX}(n,\overrightarrow{P_{k+1}})$. Since
$\overrightarrow{T_{n,k}}$ is a $\overrightarrow{P_{k+1}}$-free
digraph, $a(D)\ge a(\overrightarrow{T_{n,k}})={\rm ex}(n,K_{k+1})$.
Our main aim is to prove $D\in \overrightarrow{\mathcal{T}_{n,k}}$
for large $n$. For this, we will need two key claims.

\begin{claim}\label{inducedsub}
The digraph $D$ contains an induced subdigraph
$\overrightarrow{T_{k^2,k}}$.
\end{claim}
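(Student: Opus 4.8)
The plan is to find, inside the underlying graph of $D$, a complete $k$-partite subgraph with very large parts, and then to refine it by Ramsey-type arguments into an exact transitive orientation, i.e.\ an induced copy of $\overrightarrow{T_{k^2,k}}$.

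\textbf{Step 1 (a large complete $k$-partite subgraph of ${\rm Ung}(D)$).} If ${\rm Ung}(D)$ contained a $K_{k+1}$ on a vertex set $S$, then $D[S]$ would have complete underlying graph; picking one arc out of each $\overrightarrow{C_2}$ inside $D[S]$ yields a spanning tournament of $D[S]$, which by R\'edei's theorem (Theorem~\ref{H-path}) has a Hamilton directed path, so $D$ contains $\overrightarrow{P_{k+1}}$ --- a contradiction. Hence ${\rm Ung}(D)$ is $K_{k+1}$-free and $e({\rm Ung}(D))\le {\rm ex}(n,K_{k+1})=e(T_{n,k})$. Let $t$ be the number of $\overrightarrow{C_2}$'s of $D$ and let $G_2$ be the graph on $V(D)$ whose edges are exactly these $t$ pairs. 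A $P_{k+1}$ in $G_2$ would give a $\overrightarrow{P_{k+1}}$ in $D$, so $G_2$ is $P_{k+1}$-free and by Theorem~\ref{E-G-path}, $t=e(G_2)\le {\rm ex}(n,P_{k+1})\le\tfrac{k-1}{2}n$. Since $a(D)=e({\rm Ung}(D))+t$ and $a(D)\ge a(\overrightarrow{T_{n,k}})=e(T_{n,k})$, we get $e({\rm Ung}(D))\ge e(T_{n,k})-\tfrac{k-1}{2}n$. (In particular $a(D)\le{\rm ex}(n,K_{k+1})+{\rm ex}(n,P_{k+1})$, which proves Corollary~\ref{P_k-upper-bound}.) As $e(T_{n,k})-e(T_{n,k-1})\ge\tfrac{1}{2k(k-1)}n^2-O(1)$, for any fixed $m$ the quantity $e({\rm Ung}(D))$ eventually exceeds ${\rm ex}(n,K^m_k)$: indeed, the Erd\H{o}s--Stone theorem (Theorem~\ref{K^s_k-free}, applied with $k-1$ in place of $k$ and $m$ in place of $s$) gives ${\rm ex}(n,K^m_k)<e(T_{n,k-1})+\varepsilon n^2$ for a small fixed $\varepsilon$ and $n$ large. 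Therefore ${\rm Ung}(D)$ contains a $K^m_k$; fix its parts $U_1,\dots,U_k$ with $|U_i|=m$, where $m$ is chosen in Step 2.

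\textbf{Step 2 (Ramsey refinement).} Because ${\rm Ung}(D)$ is $K_{k+1}$-free, each $U_i$ must be independent in ${\rm Ung}(D)$ --- an edge inside $U_i$ together with one vertex from each other $U_j$ would form a $K_{k+1}$ --- hence independent in $D$. For each pair $i<j$ and each $(u,v)\in U_i\times U_j$ at least one of the arcs $uv,vu$ is present; colour the pair $\{u,v\}$ according to which of ``only $uv$'', ``only $vu$'', ``both'' occurs. This $3$-colours the complete bipartite graph between $U_i$ and $U_j$. Processing the $\binom{k}{2}$ pairs $(i,j)$ one at a time and, at each stage, replacing $U_i,U_j$ by the two sides of a monochromatic complete bipartite subgraph (a finite bipartite-Ramsey step) and every other $U_\ell$ by an arbitrary subset so that all parts keep a common size, we obtain --- provided $m$ was taken above the resulting iterated Ramsey function of $k$ --- subsets $W_i\subseteq U_i$ with $|W_i|=k$ such that for every $i<j$ all pairs in $W_i\times W_j$ receive the same colour.

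\textbf{Step 3 (excluding bad configurations).} If for some $i<j$ the colour is ``both'', then $D[W_i\cup W_j]\supseteq\overleftrightarrow{K_{k,k}}$, and alternating between the two sides produces a directed path on $2k\ge k+1$ vertices, a contradiction. So for each $i<j$ we have $W_i\mapsto W_j$ or $W_j\mapsto W_i$, and this defines a tournament $T$ on $[k]$. If $T$ had a directed triangle $W_a\mapsto W_b\mapsto W_c\mapsto W_a$, then, since $k\ge\lceil(k+1)/3\rceil$, cyclically picking distinct vertices from $W_a,W_b,W_c$ yields a directed path on $k+1$ vertices, again a contradiction. Hence $T$ has no directed triangle, so $T$ is transitive; relabelling $W_1,\dots,W_k$ along the order of $T$ gives $W_i\mapsto W_j$ for all $i<j$, with each $W_i$ independent in $D$. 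Since every arc with both ends in $W_1\cup\dots\cup W_k$ has now been accounted for, $D[W_1\cup\dots\cup W_k]$ is precisely a $\overrightarrow{T_{k^2,k}}$, proving the claim.

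The most delicate point is Step 1: one must simultaneously control the asymmetric part of $D$ (via $K_{k+1}$-freeness of ${\rm Ung}(D)$, using R\'edei's theorem) and the symmetric part (via $P_{k+1}$-freeness of $G_2$), and then make the Erd\H{o}s--Stone estimate sharp enough that the $O(n)$ loss coming from the $\overrightarrow{C_2}$'s cannot prevent the appearance of $K^m_k$. The Ramsey refinement in Step 2 and the exclusion of bad configurations in Step 3 are then routine.
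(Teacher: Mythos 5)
Your proof is correct, and it establishes the claim by a genuinely different argument from the paper's in the second half. Step~1 (showing that ${\rm Ung}(D)$ is $K_{k+1}$-free via R\'edei, bounding the number of $\overrightarrow{C_2}$'s by ${\rm ex}(n,P_{k+1})$, and invoking Erd\H{o}s--Stone) is essentially the same as in the paper; the paper even makes the identical aside about Corollary~\ref{P_k-upper-bound}. The divergence is in how one upgrades the large complete multipartite subgraph to a transitive orientation. The paper works directly with a $K^k_k$: it picks a transversal $S=\{u_1,\dots,u_k\}$ of the parts, applies R\'edei's theorem once more to get a Hamilton directed path $u_1\to\dots\to u_k$ in $D[S]$, and shows by a short splicing argument that any backward arc (from a later part to an earlier one) could be inserted into this path to produce a $\overrightarrow{P_{k+1}}$; hence $U_i\mapsto U_j$ for $i<j$, with no Ramsey machinery needed. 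You instead begin from a $K^m_k$ with $m$ an iterated bipartite-Ramsey number of $k$, 3-colour the pairs across parts (by ``forward only'', ``backward only'', ``both''), repeatedly extract monochromatic complete bipartite subgraphs, and then kill the ``both'' colour via an alternating $\overrightarrow{P_{2k}}$ and any directed triangle on parts via a cyclic $\overrightarrow{P_{3k}}$. Both routes are valid; the paper's is sharper (it only needs $m=k$, so the resulting $n_0$ is far smaller and explicit) and reveals the neat ``splice into a R\'edei path'' mechanism, whereas the Ramsey refinement is more modular and mechanical but hides the structure behind tower-type constants. One small presentational point: the iterated shrinking in your Step~2 should be stated so that at each bipartite-Ramsey step the two sides extracted have equal size and all previously processed pairs remain monochromatic after passing to subsets --- both of which hold, but are worth saying explicitly.
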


\begin{proof}
Let $\mathcal{C}$ still denote the set of all $\overrightarrow{C_2}$
in $D$ and $t=|\mathcal{C}|$. By Theorem~\ref{E-G-path}, we have
$t\leq {\rm ex}(n,P_{k+1})$. Consider the graph $G:={\rm Ung}(D)$.
On the one hand, since Theorem~\ref{H-path} implies that $G$ is
$K_{k+1}$-free, we have $e(G)\leq {\rm ex}(n, K_{k+1})$. (We remark
that this implies that $a(D)\leq{\rm ex}(n,P_{k+1})+{\rm
ex}(n,K_{k+1})$, and thus Corollary~\ref{P_k-upper-bound} holds.) On
the other hand, since $a(D)\geq e(T_{n,k})$, by Theorem
\ref{E-G-path}, we have
\[
e(G)=a(D)-t\geq e(T_{n,k})-{\rm ex}(n,P_{k+1})\geq
e(T_{n,k-1})+\frac{1}{2k^2}n^2,
\]
for all $n\geq k^4$. By Theorem~\ref{K^s_k-free}, there is an
integer $n_0$ (which depends only on $k$) such that for all $n\geq
n_0$, $G$ contains a $K_k^k=T_{k^2,k}$ as a subgraph.
Recall that $G$ is $K_{k+1}$-free. Therefore, $G$ contains an induced subgraph $H$ isomorphic to $T_{k^2,k}$.

Now consider the orientation of $H$ in $D$. Let
$\{U_1,U_2,\ldots,U_k\}$ be the partition of $V(H)$,
$S=\{u_1,u_2,\ldots,u_k\}$ be a subset of $V(H)$ with $u_i\in U_i$,
and $D[S]$ be the induced subgraph of $D$ on $S$. By Theorem
\ref{H-path}, $D[S]$ contains a $\overrightarrow{P_k}$. Without loss
of generality, we assume that $u_i\rightarrow u_{i+1}$ for
$i\in[1,k-1]$.

Given $v_j\in U_j\setminus\{u_j\}$, we claim that $v_j\nrightarrow
u_i$ in $D$ for $i\in[1,j-1]$. Suppose not, then there is an arc
$v_j\rightarrow u_i$ with $i$ as small as possible. Clearly, $i\neq
1$, otherwise $\overrightarrow{P}:=v_ju_1\ldots u_k$ is a
$\overrightarrow{P_{k+1}}$ in $D$, a contradiction. Now we assume
that $i\in[2,j-1]$. Note that we must have $u_{i-1}\rightarrow v_j$
in $D$ from the minimality of $i$. In this case,
$\overrightarrow{P}:=u_1\ldots u_{i-1}v_ju_i\ldots u_k$ is also a
$\overrightarrow{P_{k+1}}$, a contradiction. Thus, we have
$u_i\mapsto v_j$ in $D$ for all $v_j\in U_j\setminus\{u_j\}$ and
$1\leq i<j\le k$. By a similar analysis, for $v_i\in
U_i\setminus\{u_i\}$, we can show that $v_i\mapsto u_j$ in $D$ for
$1\le i<j\leq k$.

Next we show that for every $v_i\in U_i$ and $v_j\in U_j$ with
$1\leq i<j\leq k$, $v_j\nrightarrow v_i$ holds. If $v_i=u_i$ and
$v_j\neq u_j$, or $v_i\neq u_i$ and $v_j=u_j$, then we are done by
the analysis above. Suppose now that $v_i\neq u_i$ and $v_j\neq
u_j$. Then we have $u_i\rightarrow v_j$ and $v_i\rightarrow
u_{i+1}$. If $v_j\rightarrow v_i$, then
$\overrightarrow{P}:=u_1\ldots u_iv_jv_iu_{i+1}\ldots u_k$ is a
$\overrightarrow{P_{k+2}}$ in $D$, a contradiction. Finally, we
consider the case $v_i=u_i$ and $v_j=u_j$. Let $v'_i$ be a vertex in
$U_i\setminus\{u_i\}$. We have $u_{i-1}\rightarrow v'_i$ (if $i\geq
2$) and $v'_i\rightarrow u_{i+1}$. It follows that
$\overrightarrow{P}:=u_1\ldots u_{i-1}v'_iu_{i+1}\ldots u_k$ is also
a $\overrightarrow{P_k}$ in $D$, and $v_j\nrightarrow v_i$ as well.
Thus we have $U_i\mapsto U_j$ in $D$ for $1\leq i<j\leq k$. This
implies that the subdigraph of $D$ induced by $V(H)$ is a
$\overrightarrow{T_{k^2,k}}$.
\end{proof}

According to Claim \ref{inducedsub}, let $D_1$ be an induced
$\overrightarrow{T_{k^2,k}}$ of $D$. For simplicity, let
$\{U_1,U_2,\ldots,U_k\}$ be the partition of $V(D_1)$ with
$U_i\mapsto U_j$ for $1\leq i<j\leq k$. Thus $|U_i|=k$ for all
$i\in[1,k]$. Note that $U_k \nrightarrow v$ for every $v\in V(D)$
since $D$ is $\overrightarrow{P_{k+1}}$-free. Thus for every $v\in
V(D)$, there is a smallest $i$ such that $U_i \nrightarrow v$. We
define a partition $\{V_1,V_2,\dots, V_k\}$ of $V(D)$ as follow:
\[
V_j:= \left \{v\in V(D): j=\min \left \{i\in [1,k]: U_i\nrightarrow v
\right \}\right \}.
\]
Clearly, $V(D)=\bigcup _{i=1}^{k} V_i$, $V_i\cap V_j=\emptyset$ for
$1\leq i<j\leq k$, and $U_i\subseteq V_i$ for all $i\in [1,k]$. For
every $v\in V(D)$, by the arcs-maximality of $D$ (i.e., $D\in {\rm
EX}(n,\overrightarrow{P_{k+1}})$), we will obtain the following
result.

\begin{claim}\label{vretex-pro}
For $s\in [1,k]$, if $v\in V_s$ then $\bigcup_{i=s}^kV_i\nrightarrow v$ and
$v\rightarrow \bigcup_{i=s+1}^kV_i$.
\end{claim}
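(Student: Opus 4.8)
The plan is to argue by contradiction using the extremality of $D$, showing that any violation of the stated relations lets us add an arc to $D$ without creating a $\overrightarrow{P_{k+1}}$, contradicting $D\in{\rm EX}(n,\overrightarrow{P_{k+1}})$. Fix $v\in V_s$. First I would establish the ``in-arc'' part: $\bigcup_{i=s}^k V_i\nrightarrow v$. By definition of $V_s$ we already have $U_s\nrightarrow v$, so $w\nrightarrow v$ for every $w\in U_s$. Suppose for contradiction that some $w\in V_t$ with $t\ge s$ has $w\rightarrow v$. The key point is that the induced $\overrightarrow{T_{k^2,k}}$ supplies, for any prescribed ``missing'' index, a transitive directed path through the parts $U_1,\dots,U_k$ of length $k$ that can be made to pass through or avoid $v$ and $w$ as needed. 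Concretely, since $|U_i|=k\ge$ (enough) for each $i$, I can pick representatives $u_1\in U_1,\dots,u_k\in U_k$ forming a Hamilton path $u_1\rightarrow u_2\rightarrow\cdots\rightarrow u_k$ of $D_1$ (Theorem~\ref{H-path} on the transitive tournament on $\{u_1,\dots,u_k\}$, or simply the transitive structure itself), and I may additionally require $u_s\ne$ any particular vertex of $U_s$ when convenient. Because $w\notin V_1\cup\cdots\cup V_{s-1}$, for each $i<s$ we do \emph{not} have $U_i\nrightarrow w$, so there is a representative $u_i'\in U_i$ with $u_i'\rightarrow w$; chaining $u_1'\rightarrow\cdots$ is not automatic, but using that $U_i\mapsto U_j$ for $i<j$ we get $u_1'\rightarrow u_2'\rightarrow\cdots\rightarrow u_{s-1}'\rightarrow w$, then $w\rightarrow v$, and then we need a continuation of length $k-s$ out of $v$. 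For this I would first prove the ``out-arc'' statement.

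So the second step, the ``out-arc'' part $v\rightarrow\bigcup_{i=s+1}^k V_i$, is really the engine. Suppose some $v\in V_s$ has $v\nrightarrow w$ for some $w\in V_t$, $t>s$. I claim adding the arc $vw$ to $D$ creates no $\overrightarrow{P_{k+1}}$, contradicting maximality. Any new $\overrightarrow{P_{k+1}}$ must use the arc $vw$, so it has the form $Q_1\, v\, w\, Q_2$ where $Q_1$ ends at $v$ and $Q_2$ starts at $w$, with $|Q_1|+|Q_2|=k-1$. The crux is a structural observation: because $v\in V_s$, we have $U_i\rightarrow v$ for all $i<s$ (indeed $U_i\mapsto v$ since $s$ was chosen minimal with $U_s\nrightarrow v$), so one can always \emph{extend} a path ending at $v$ backwards through fresh vertices of $U_{s-1},U_{s-2},\dots$; symmetrically $v\rightarrow U_j$ is \emph{not} claimed, but $w\in V_t$ gives $U_i\rightarrow w$ for $i<t$ and in particular the path $w Q_2$ can be pushed forward using $U_j$'s for $j$ large. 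I would show: if there were a $\overrightarrow{P_{k+1}}$ through $vw$, then by splicing in disjoint representatives from the induced $\overrightarrow{T_{k^2,k}}$ (there is room since each $|U_i|=k$ and the path uses at most $k+1$ vertices total) one produces a genuine $\overrightarrow{P_{k+1}}$ already in $D$ — the new arc $vw$ can be ``routed around'' through $U_s\rightarrow U_{s+1}\rightarrow\cdots\rightarrow U_t$ because $v$ behaves like a vertex of $U_s$ on its out-side only when... — this is precisely where care is needed, and it is the main obstacle.

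Let me state the obstacle plainly: the asymmetry is that $v\in V_s$ guarantees incoming arcs from $U_1,\dots,U_{s-1}$ but says nothing a priori about $v$'s outgoing arcs to $U_{s+1},\dots,U_k$ (that is what we are trying to prove), so the ``route around'' argument for the new arc cannot simply replace the segment $v\to w$ by a path $v\to U_{s+1}\to\cdots\to U_t\to w$. The fix is to prove the two parts of the claim \emph{simultaneously by induction on $k-s$} (i.e., process the parts $V_k,V_{k-1},\dots,V_1$ in that order). For $v\in V_k$: the in-part says $V_k\nrightarrow v$, which holds because $U_k\nrightarrow w$ for all $w$ (as $D$ is $\overrightarrow{P_{k+1}}$-free and $U_k$ is the terminal part of a length-$k$ transitive path — indeed for any $w\in V(D)$ and any $u\in U_k$, an arc $u\rightarrow w$ plus the Hamilton path of $D_1$ into $u$ gives $\overrightarrow{P_{k+1}}$), and there is no out-part to prove ($\bigcup_{i>k}=\emptyset$). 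Then assuming the claim for all $V_{s'}$ with $s'>s$, for $v\in V_s$ the inductive hypothesis tells us every $w\in V_{s+1}\cup\cdots\cup V_k$ already satisfies $w\rightarrow\bigcup_{i>s+1}\cdots$ and $\bigcup_{i\ge s+1}V_i\nrightarrow w$, so a path starting at such a $w$ can be extended forward to length $k-s$ entirely within $\bigcup_{i\ge s+1}V_i$ using fresh vertices; combined with extending backward from $v$ through $U_{s-1},\dots,U_1$, adding $vw$ would force $\overrightarrow{P_{k+1}}\subseteq D$, contradiction, giving $v\rightarrow\bigcup_{i>s}V_i$; and then the in-part $\bigcup_{i\ge s}V_i\nrightarrow v$ follows by a symmetric argument (if $w\rightarrow v$ with $w\in V_t$, $t\ge s$, route $U_1\to\cdots\to U_{s-1}\to\cdots$ into $w$ — wait, we need a path \emph{into} $w$ of the right length, which the structure of $V_t$, $t\ge s$, and the induced $\overrightarrow{T_{k^2,k}}$ provide — then $w\to v$ then extend out of $v$ by the out-part just proven, yielding a forbidden path). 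I expect the bookkeeping of disjointness (each splice uses new representatives, and $\binom{k \text{ per part}}{}$ comfortably covers a path on $\le k+1$ vertices) to be routine once the induction is set up; the genuinely delicate point is getting the induction order and the simultaneous statement right so that at each stage the forward-extension uses only already-processed parts.
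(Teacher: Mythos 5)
Your plan diverges from the paper's proof in two structural ways, and the divergence introduces a gap you do not close. The paper does \emph{not} process $V_k,V_{k-1},\dots,V_1$; it takes the \emph{smallest} $s$ for which the claim fails, so that $D[\bigcup_{i<s}V_i]$ is already known to be $\overrightarrow{P_s}$-free and every $V_{s'}$ with $s'<s$ is independent with $V_{s'}\mapsto V_{s''}$. That information is exactly what bounds the backward segment $\overrightarrow{P}[x,v]$ of a new $\overrightarrow{P_{k+1}}$ in $D+vu$ to at most $s$ vertices (every arc entering $v$ comes from $\bigcup_{i<s}V_i$ by the in-part just proved, every arc entering such a vertex again comes from $\bigcup_{i<s}V_i$ by minimality, and that induced subdigraph has no $\overrightarrow{P_s}$). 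In your downward induction you have processed only $V_{s+1},\dots,V_k$, which controls neither side of the splice: the backward path ending at $v$ may wander freely through the unprocessed parts $V_1,\dots,V_s$, so $|Q_1|$ has no bound; and the forward path from $w\in V_t$ is only constrained to avoid $\bigcup_{i\geq t}V_i$ as in-neighbours, not to stay in $\bigcup_{i>s}V_i$ --- $w$ may still have out-neighbours in $V_1\cup\dots\cup V_s$, so $|Q_2|$ has no bound either. Consequently your key assertion that ``adding $vw$ creates no $\overrightarrow{P_{k+1}}$'' cannot be established from the data you have, and your own note ``this is precisely where care is needed'' flags exactly the point that remains unresolved.

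There is a second, quieter gap: the paper's argument for the in-part is \emph{not} a pure routing argument. It first manufactures, via an arc-addition to $U_s$, a directed path $\overrightarrow{Q}$ of order $k-s+1$ starting at $v$; then splits on $|W|$ where $W=\{w\in\bigcup_{i\ge s}V_i : w\to v\}$; and for small $|W|$ it performs an arc \emph{swap} (delete all $wv$ with $w\in W$, add all $vu$ with $u\in U_{s+1}$) producing a digraph with strictly more arcs, and extracts a contradiction from a forced $\overrightarrow{P_{k+1}}$ in that swapped digraph. This swap is the engine that handles the case $1\le |W|\le k-s$, where no direct splice into the induced $\overrightarrow{T_{k^2,k}}$ is available; your proposal has no analogue of it. Finally, a small but real slip in your base case: $V_k\nrightarrow v$ does not follow from $U_k\nrightarrow V(D)$ alone, since $V_k$ may properly contain $U_k$; one must additionally use that any $w\in V_k$ has an in-neighbour in $U_{k-1}$ (by the definition of $V_k$) and splice $u_1\to\cdots\to u_{k-1}\to w\to v$ with fresh representatives. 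In short, the paper's top-down choice of $s$ and its arc-swap are both essential, and neither is replaced in your outline.
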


\begin{proof}
Suppose the claim is not true for some $s$. We choose such an $s$
that is as small as possible, and let $v\in V_s$ not satisfying the
assertion. We first show that there is a directed path of order
$k-s+1$ starting from $v$. Let $u\in U_s\setminus\{v\}$ and $D'$ be
the digraph obtained from $D$ by adding the arc $uv$. Since $D$ is
extremal for $\overrightarrow{P_{k+1}}$, $D'$ contains a directed
path $\overrightarrow{P}$ of order $k+1$ which passes through the
added arc $uv$. Let $x,y$ be the origin and terminus of
$\overrightarrow{P}$. If $D$ has no path of order $k-s+1$ starting
from $v$, then $|V(\overrightarrow{P}[v,y])|\leq k-s$ and
$|V(\overrightarrow{P}[x,u])|\geq s+1$. Thus we can take $u_i\in
U_i\setminus V(\overrightarrow{P})$ with $i\in[s+1,k]$, then
$\overrightarrow{P}[x,u]uu_{s+1}\ldots u_k$ is a
$\overrightarrow{P_{k+1}}$ in $D$, a contradiction. For simplicity,
let $\overrightarrow{Q}$ be a directed path of order $k-s+1$
starting from $v$.

Let $W=\{w\in\bigcup_{i=s}^kV_i: w\rightarrow v\}$. We will show
that $W=\emptyset$. First consider $s=1$. For $v\in V_1$, if
$|W|\geq k$, then there is a vertex $w\in W\setminus
V(\overrightarrow{Q})$, which implies $wv\overrightarrow{Q}$ is a
$\overrightarrow{P_{k+1}}$, a contradiction. Now assume that
$1\leq|W|\leq k-1$. Let $w\in W$ be arbitrary. If $v\rightarrow u_2$
for some $u_2\in U_2\backslash\{w\}$, then we can take $u_i\in U_i$
with $i\in[3,k]$, and $wvu_2\ldots u_k$ is a
$\overrightarrow{P_{k+1}}$ in $D$, a contradiction. Thus we have
$v\nrightarrow U_2\backslash\{w\}$. Specially, if $|W|\geq 2$, then
$v\nrightarrow U_2$. Let $D'$ be the digraph on $V(D)$ with arc set
\[
A(D')=(A(D)\setminus\{wv: w\in W\})\cup\{vu: u\in U_2\},
\]
which yields another $\overrightarrow{P_{k+1}}$-free digraph with more arcs than $D$, contradicting the choice of $D$.

Next, let $s\in [2,k-1]$. By the choice of $v$, we have that
$D[\bigcup_{i=1}^{s-1}V_i]$ is $\overrightarrow{P_s}$-free and
$\bigcup_{i=1}^{s-1}V_i\mapsto\bigcup_{i=s}^kV_i$. If $|W|\geq
k-s+1$, then there is a vertex $w\in W\setminus
V(\overrightarrow{Q})$. Thus we can take $u_i\in U_i$ with
$i\in[1,s-1]$, then $u_1\ldots u_{s-1}wv\overrightarrow{Q}$ is a
$\overrightarrow{P_{k+1}}$ in $D$, a contradiction. Now assume that
$1\leq|W|\leq k-s$. Let $w\in W$ be arbitrary. If $v\rightarrow
u_{s+1}$ for some $u_{s+1}\in U_{s+1}\backslash\{w\}$, then we can
take $u_i\in U_i$ with $i\in[1,s-1]\cup[s+2,k]$, and $u_1\ldots
u_{s-1}wvu_{s+1}\ldots u_k$ is a $\overrightarrow{P_{k+1}}$ in $D$,
a contradiction. Thus we have $v\nrightarrow
U_{s+1}\backslash\{w\}$. Specially, if $|W|\geq 2$, then
$v\nrightarrow U_{s+1}$. Let $D'$ be the digraph on $V(D)$ with arc
set
\[
A(D')=(A(D)\setminus\{wv: w\in W\})\cup\{vu: u\in U_{s+1}\}.
\]
Clearly $|A(D')|>|A(D)|$, implying that $D'$ contains a directed
path $\overrightarrow{P}=\overrightarrow{P_{k+1}}$ which passes
through an arc $vu$ with $u\in U_{s+1}$. Let $x,y$ be the origin and
terminus of $\overrightarrow{P}$. Clearly
$V(\overrightarrow{P}[x,v])\setminus\{v\}\subseteq\bigcup_{i=1}^{s-1}V_i$
and thus $|V(\overrightarrow{P}[x,v])|\leq s$. It follows that
$|V(\overrightarrow{P}[u,y])|\geq k-s+1$. Thus we can take $u_i\in
U_i$ with $i\in[1,s]$, then $u_1\ldots u_su\overrightarrow{P}[u,y]$
is a $\overrightarrow{P_{k+1}}$ of $D$, a contradiction.

Finally let $s=k$. Thus $W\subseteq V_k$. If $W\neq\emptyset$, then
we can take $w\in W$, $u_i\in U_i$ for $i\in[1,k-1]$, and $u_1\ldots
u_{k-1}wv$ is a $\overrightarrow{P_{k+1}}$ of $D$, a contradiction.
In any case, we proved that $W=\emptyset$, i.e.,
$\bigcup_{i=s}^kV_i\nrightarrow v$.

Suppose now that there is a vertex $u\in\bigcup_{i=s+1}^kV_i$ such
that $v\nrightarrow u$ in $D$. Let $D'$ be the digraph obtained from
$D$ by adding the arc $vu$. Then $D'$ contains a path
$\overrightarrow{P}=\overrightarrow{P_{k+1}}$ which passes through
the arc $vu$. Let $x,y$ be the origin and terminus of
$\overrightarrow{P}$. Recall that $\bigcup_{i=s}^kV_i\nrightarrow v$
and $D[\bigcup_{i=1}^{s-1}V_i]$ is $\overrightarrow{P_s}$-free. We
have that $|V(\overrightarrow{P}[x,v])|\leq s$ and
$|V(\overrightarrow{P}[u,y])|\geq k-s+1$ (we remark that if $s=1$,
then $x=v$, and $|V(\overrightarrow{P}[u,y])|\geq k$). Note that
$U_s\nrightarrow u$ does not hold, implying that there is a vertex
$u_s\in U_s$ with $u_s\rightarrow u$. Thus we can take $u_i\in U_i$,
$i\in[1,s-1]$, and $u_1\ldots u_su\overrightarrow{P}[u,y]$ contains
a $\overrightarrow{P_{k+1}}$ of $D$, a contradiction. Thus we have
$v\rightarrow u$ for all $u\in\bigcup_{i=s+1}^kV_i$.
\end{proof}

According to Claim \ref{vretex-pro}, it is easy to see that the
partition $\{V_1,V_2,\dots, V_k\}$ of $V(D)$ satisfies the
Properties~(\ref{path-item:b}) and~(\ref{path-item:c}) in the
definition of $\overrightarrow{T_{n,k}}$. When $D$ also satisfies
the Property~(\ref{path-item:a}) in the definition of
$\overrightarrow{T_{n,k}}$, $D$ has the maximum number of arcs.
Therefore, $D\in \overrightarrow{\mathcal{T}_{n,k}}$.
\end{proof}

\section{The proof for directed cycles}\label{Sec-cycle}
In this section, we aim to prove Theorem~\ref{C_k}. For simplicity,
let $f(n,k)=a(\overrightarrow{F_{n,k}})$ for all $n,k\in
\mathbb{N}^*$.

\begin{proof}[{\bf Proof of Theorem~\ref{C_k}}]
We use double induction on $n$ and $k$. For $k=1,2$, we are done by
Theorem \ref{Brown-Harary}. Now we assume that $k\geq 3$. When $n\le
k$, clearly $\overleftrightarrow{K_n}=\overrightarrow{F_{n,k}}$ is
the unique extremal digraph for $\overrightarrow{C_{k+1}}$ and ${\rm
ex}(n,\overrightarrow{C_{k+1}})=f(n,k)$. For
the induction step, assume that $n\ge k+1$ and ${\rm
ex}(n',\overrightarrow{C_{k+1}})= f(n',k)$ holds for all $n'<n$.

Let $D\in {\rm EX}(n,\overrightarrow{C_{k+1}})$, then $a(D)\ge f(n,k)$
since $\overrightarrow{F_{n,k}}$ is $\overrightarrow{C_{k+1}}$-free.
Due to $f(n,k)>f(n,k-1)$ for all $n\geq k$, $D$ contains a directed
cycle $\overrightarrow{C}=\overrightarrow{C_k}$. Let
$V(\overrightarrow{C})=\{u_1,u_2,\ldots,u_k\}$ with $u_iu_{i+1}\in
A(\overrightarrow{C})$ (the subscripts are taken modulo $k$) and $V'=V(D)\setminus V(\overrightarrow{C})$.
Clearly $a(D[V(\overrightarrow{C})])\leq k(k-1)$. By the induction
hypothesis, $D[V']$ has at most $f(n-k,k)$ arcs.

For $v\in V'$, we claim that there
are at most $k$ arcs between $V(\overrightarrow{C})$ and $v$.
For otherwise there will be two
vertices $u_i,u_{i+1}$ with $u_i\rightarrow v$ and $v\rightarrow
u_{i+1}$, and thus
$\overrightarrow{C'}=u_ivu_{i+1}\overrightarrow{C}[u_{i+1},u_i]$ is
a $\overrightarrow{C_{k+1}}$ in $D$. Hence, we have
\[
a(D)\leq k(k-1)+k(n-k)+f(n-k,k)=f(n,k).
\]
The equality on the right follows by the definition of $f(n,k)$.
Since $a(D)\geq f(n,k)$, we have equality in above formula. Thus,
$D[V(\overrightarrow{C})]$ must be a $\overleftrightarrow{K_k}$, the number of arcs
between $V(\overrightarrow{C})$ and every vertex in $V'$ is exactly $k$, and $D[V']$
is an extremal digraph for $\overrightarrow{C_{k+1}}$ and $n-k$.

For every $v\in V'$, since
$D[V(\overrightarrow{C})]$ is a $\overleftrightarrow{K_k}$, these
$k$ arcs between $v$ and $V(\overrightarrow{C})$ must have the same
direction. For two distinct vertices $u,v\in V'$, if $uv$ and $vu$ are all in $A(D)$, then the
direction of the arcs between $V(\overrightarrow{C})$ and $u$ must be
the same as that between $V(\overrightarrow{C})$ and $v$. Therefore, by
recursive induction, we can obtain that the structure of $D$ can
only be a $\overrightarrow{F_{n,k}}$.
\end{proof}

\section{The proofs for orientations of $C_4$}\label{Sec-4-cycle}

In this section, we will prove {Theorem~\ref{p13small},
Theorem~\ref{p13} and Theorem~\ref{anticycle}, respectively.} To
prepare for this we will need some notation. We
follow~\cite{bang2008digraphs} and~\cite{MR3822066}. Let $G=(V,E)$
be a graph, for $v\in V$, the \textit{neighborhood} of $v$ is
denoted by $N_G(v):=\{u\in V: uv \in E\}$, and the degree of $v$ is
$d_G(v)=|N_G(v)|$.
Analogously, given a digraph $D=(V,A)$ and
$v\in V$, we use the following notation:
\[
N^+_D(v):=\{u\in V: (v,u)\in A\},\ N^-_D(v):=\{w\in V: (w,v)\in A\}.
\]
The set $N^+_D(v)$, $N^-_D(v)$ and $N_D(v)=N^+_D(v)\cup N^-_D(v)$
are called the \textit{out-neighborhood}, \textit{in-neighborhood}
and \textit{neighborhood} of $v$. The \textit{out-degree}, the
\textit{in-degree}, and the \textit{degree} of $v$ are denoted by
$d^+_D(v):=|N^+_D(v)|$, $d^-_D(v):=|N^-_D(v)|$, and
$d_D(v):=d^+_D(v)+d^-_D(v)$. For $i,j,k \in \mathbb{N}^*$, by
$\varTheta_{i,j,k}$ we mean the graph (or multigraph) consisting of
three internally-disjoint paths of lengths $i,j,k$, respectively,
with the same origin and the same terminus.

Next, we state several lemmas that are useful for the proofs of
Theorem~\ref{p13small} and Theorem~\ref{p13}.

\begin{lemma}\label{LeOrientation}
Every orientation of $K_4$ and $\varTheta_{1,1,3}$ contains a
$\overrightarrow{P_{1,3}}$.
\end{lemma}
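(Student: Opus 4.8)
The plan is to verify both assertions by a short structural case analysis, using R\'edei's theorem for the tournament case and the forced digon for the theta graph to keep the number of cases small. Throughout, recall that $\overrightarrow{P_{1,3}}$ is the digraph on four vertices $s,a,b,t$ with arcs $s\to a$, $a\to b$, $b\to t$ and $s\to t$; thus to locate a copy of $\overrightarrow{P_{1,3}}$ inside a digraph it suffices to produce a directed path of length $3$ whose two endpoints are also joined by an arc pointing from the start to the finish.

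First I would handle an arbitrary orientation of $K_4$, that is, a tournament $\overrightarrow{T_4}$. By Theorem~\ref{H-path} it contains a Hamilton directed path $v_1\to v_2\to v_3\to v_4$. If $v_1\to v_4$, then this path together with the arc $v_1\to v_4$ is a copy of $\overrightarrow{P_{1,3}}$, and we are done. So assume $v_4\to v_1$; then $v_1\to v_2\to v_3\to v_4\to v_1$ is a Hamilton directed cycle and the only undetermined arcs are the two diagonals, between $v_1$ and $v_3$ and between $v_2$ and $v_4$. Relabelling cyclically along this Hamilton directed cycle (the rotation $v_i\mapsto v_{i+2}$ fixes the cycle and reverses the diagonal between $v_1$ and $v_3$), we may assume $v_1\to v_3$, and then there are two subcases. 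If $v_2\to v_4$, the directed path $v_2\to v_4\to v_1\to v_3$ together with the arc $v_2\to v_3$ is a copy of $\overrightarrow{P_{1,3}}$; if $v_4\to v_2$, the directed path $v_1\to v_3\to v_4\to v_2$ together with the arc $v_1\to v_2$ is a copy of $\overrightarrow{P_{1,3}}$. (In fact the cyclic relabellings identify all four diagonal patterns, so this is just the unique strongly connected $4$-tournament and a single subcase suffices.)

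Next I would handle an arbitrary orientation of $\varTheta_{1,1,3}$. Write $u,v$ for the common origin and terminus, and $x,y$ for the interior vertices of the path of length $3$, with $x$ adjacent to $u$ and $y$ adjacent to $v$. Since orientations of multigraphs are strict, the two length-$1$ paths, i.e.\ the double edge between $u$ and $v$, must be oriented as a digon $u\to v$, $v\to u$. The transposition $u\leftrightarrow v$, $x\leftrightarrow y$ is an automorphism of $\varTheta_{1,1,3}$ fixing this digon, so we may assume the middle edge $xy$ is oriented $x\to y$. It then remains to check the four orientations of the pair of edges $ux$ and $yv$: if $u\to x$ and $y\to v$, take $u\to x\to y\to v$ with the arc $u\to v$; if $u\to x$ and $v\to y$, take $v\to u\to x\to y$ with the arc $v\to y$; if $x\to u$ and $y\to v$, take $x\to y\to v\to u$ with the arc $x\to u$; if $x\to u$ and $v\to y$, take $x\to u\to v\to y$ with the arc $x\to y$. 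In every case we have exhibited a copy of $\overrightarrow{P_{1,3}}$.

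I do not expect any genuine difficulty here: the whole lemma is a finite check on four-vertex digraphs, and the only points to be careful about are invoking R\'edei's theorem correctly, observing that strictness forces the digon in $\varTheta_{1,1,3}$ (which is precisely what legitimizes the automorphism reduction), and making sure the small case splits above are exhaustive and that each exhibited quadruple of vertices is genuinely distinct.
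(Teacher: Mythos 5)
Your proof is correct, and it takes essentially the same approach as the paper: both are exhaustive checks over the finite set of (strict) orientations of two four-vertex (multi)graphs. The paper simply refers the reader to Figure~6, which lists all non-isomorphic orientations of $K_4$ and $\varTheta_{1,1,3}$ alongside their $\overrightarrow{P_{1,3}}$-subdigraphs; you replace the pictorial check with a prose case analysis, using R\'edei's theorem to organize the tournament case (Hamilton directed path, then the wrap-around arc, then the diagonals, with the rotation $v_i\mapsto v_{i+2}$ cutting the diagonal cases in half) and the $u\leftrightarrow v$, $x\leftrightarrow y$ automorphism together with the forced digon to cut the $\varTheta_{1,1,3}$ cases from eight to four. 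One small imprecision worth noting: the rotation $v_i\mapsto v_{i+2}$ actually reverses \emph{both} diagonals simultaneously, not just $v_1v_3$; this does not affect your argument, since you then treat both orientations of $v_2v_4$ anyway, but it is the reason your parenthetical observation (that all four diagonal patterns lie in a single rotation orbit) is the cleaner way to state the reduction.
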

In Figure~6 we list all the (non-isomorphic) orientations of $K_4$
and $\varTheta_{1,1,3}$ together with the
$\overrightarrow{P_{1,3}}$-subdigraphs, which directly proves
Lemma~\ref{LeOrientation}.

\begin{lemma}\label{LeSubgraph}
For $n\leq 8$, every non-empty $K_4$-free graph $G$ on $n$ vertices
contains a $C_4$-free subgraph $H$ with $e(H)>e(G)/2$.
\end{lemma}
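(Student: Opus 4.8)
The plan is to argue case-by-case on the maximum size $e(G)$ of a $K_4$-free graph on $n\le 8$ vertices. The key quantitative input is the Turán bound: since $G$ is $K_4$-free, $e(G)\le e(T_{n,3})$, so for $n\le 8$ we have $e(G)\le 16$ (attained only at $n=8$ by $T_{8,3}=K_{3,3,2}$), and in general $e(G)\le\{0,1,3,4,6,8,11,16\}$ for $n=1,\dots,8$. The target is to find a $C_4$-free subgraph $H$ with $e(H)\ge\lfloor e(G)/2\rfloor+1$. For small $e(G)$ this is almost free: any forest inside $G$ is $C_4$-free, and a connected graph on $m$ edges contains a spanning tree, hence a $C_4$-free subgraph with (number of vertices in that component$)-1$ edges; since $G$ is non-empty, taking a spanning forest of $G$ already gives a $C_4$-free subgraph with $n-c$ edges where $c$ is the number of components. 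So whenever $n-c>e(G)/2$ we are done immediately, which disposes of all sparse cases.

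The remaining work is concentrated where $G$ is edge-dense relative to its order, i.e.\ $G$ close to $T_{n,3}$. First I would observe that it suffices to prove the statement for $n=8$ with $e(G)=e(T_{8,3})=16$, plus the handful of near-extremal configurations for $n\in\{5,6,7\}$; the monotone-in-$n$ structure of the Turán numbers means the bottleneck is always the densest case. For $n=8$, $e(G)\le 16$ forces $G\subseteq K_{3,3,2}$ up to the loss of a few edges, and we need a $C_4$-free $H$ with $e(H)\ge 9$. The natural construction: inside the complete tripartite graph with parts $A,B,C$ (sizes $3,3,2$), delete edges to kill all $4$-cycles. Every $C_4$ in a tripartite graph lies between exactly two parts, so it is enough to make the bipartite graph between each pair of parts $C_4$-free while keeping many edges. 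Between $A$ and $B$ (both size $3$) a $C_4$-free bipartite graph has at most ${\rm ex}(3,3;C_4)=6$ edges (this is $1+\sqrt{3\cdot2}$-type counting, but for these tiny sizes one just notes that $6$ edges on $K_{3,3}$ with no $C_4$ is a "friendship"-like configuration — e.g.\ the incidence graph of a near-pencil); between $A$ and $C$, and between $B$ and $C$, keep a $C_4$-free bipartite graph on parts of sizes $3,2$, which can have up to $4$ edges each. This would give $6+4+4=14$ — comfortably more than $9$ — but one must check these choices are simultaneously consistent (they are, since the three bipartite pieces are edge-disjoint). For $G$ not equal to but contained in $K_{3,3,2}$, the same deletion scheme applied to $G$ loses at most as many edges, and since we start with $\ge$ half of $16$... — more carefully, one verifies directly that halving $e(G)$ and rounding leaves room.

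The main obstacle I anticipate is handling the sporadic $K_4$-free graphs that are \emph{not} subgraphs of a Turán graph but are still moderately dense — for instance at $n=7$, $e(G)=11$ is not achieved by $T_{7,3}$ (which has $16$... no: $e(T_{7,3})=16$ is wrong — $e(T_{7,3})=\lfloor(2/3)\cdot 49/2\rfloor$-ish), so the extremal $K_4$-free graphs on $7$ vertices include several non-tripartite ones, and likewise on $5,6,8$ there are multiple $C_4$-free \emph{and} $K_4$-free extremal graphs (the list in Figure~4 for $C_4$ is suggestive). For those, rather than a uniform construction, I would simply enumerate: a $K_4$-free graph on $\le 8$ vertices has bounded size, so there are finitely many maximal ones, and for each maximal $G$ exhibit an explicit $C_4$-free subgraph meeting the bound; any non-maximal $G$ inherits the subgraph of a maximal supergraph while its own edge count only drops. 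Concretely I expect the proof to reduce to: (1) the trivial spanning-forest bound for $e(G)\le 8$; (2) an explicit check for each of the (few) $K_4$-free near-extremal graphs at $n=5,6,7$; (3) the tripartite deletion argument at $n=8$. The delicate point is purely bookkeeping — making sure that in the genuinely dense cases ($n=7,8$) the rounded-down half is strictly beaten, which forces the "$>e(G)/2$" rather than "$\ge$" and is exactly where the small-$n$ hypothesis is used.
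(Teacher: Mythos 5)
Your high-level plan — dispose of the sparse cases with a spanning-forest bound, then treat the dense near-Tur\'an cases by explicit construction — resembles the skeleton of the paper's proof, but two concrete errors break it.

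First, you miscompute the Tur\'an numbers. You claim $e(G)\le 16$ for $n\le 8$, attained at $n=8$ by $K_{3,3,2}$; in fact $e(K_{3,3,2})=9+6+6=21$, and the sequence $e(T_{n,3})$ for $n=1,\dots,8$ is $0,1,3,5,8,12,16,21$, not $0,1,3,4,6,8,11,16$. The dense regime is therefore much larger than you allow for: at $n=8$ one must handle $e(G)$ up to $21$, and a spanning forest (at most $7$ edges) only disposes of $e(G)\le 13$. The paper confronts this head-on: a minimal counterexample is shown to be $2$-connected with minimum degree $\ge 3$, then a cycle of length $\ne 4$ plus a spanning forest gives a $C_4$-free subgraph with $n$ edges, forcing $e(G)\ge 2n$; combined with Tur\'an this leaves the cases $(n,e(G))\in\{(6,12)\}\cup\{7\}\times[14,16]\cup\{8\}\times[16,21]$, each of which is settled separately (using Gy\H{o}ri--Keszegh for edge-disjoint triangles, Fan's Hamiltonicity criterion and chord/bridge analysis, and the Strong Perfect Graph Theorem to reduce the $e(G)\ge 20$ case to $K_{2,2,4}$ or $K_{2,3,3}$ minus one edge).

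Second, the key construction you propose for the $n=8$ case is invalid. You assert that ``every $C_4$ in a tripartite graph lies between exactly two parts,'' and conclude that stitching together $C_4$-free bipartite graphs between each pair of parts yields a $C_4$-free subgraph. This is false: in a $3$-partite graph a $4$-cycle can visit three classes, e.g.\ $a_1\,c_1\,a_2\,b_3\,a_1$ uses two $A$--$C$ edges and two $A$--$B$ edges. One can check that the natural choices of $C_4$-free bipartite pieces inside $K_{3,3,2}$ ($6$ edges on $A$--$B$, $4$ each on $A$--$C$ and $B$--$C$) do create such a ``mixed'' $C_4$, so the union is not $C_4$-free and the $14$-edge bound does not materialize. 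Your final reduction (``for $G$ properly contained in $K_{3,3,2}$ the same deletion scheme loses at most as many edges'') is also not a valid inference, since the chosen $C_4$-free subgraph of $K_{3,3,2}$ need not sit inside $G$. Because of these gaps, your proposal does not constitute a proof; the paper's argument is genuinely more intricate and is not recoverable from your outline.
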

The proof of Lemma~\ref{LeSubgraph} needs some technic analysis,
which we postpone to the last part of the section.
\begin{center}
\begin{picture}(300,145)\label{P13}

\put(20,85){\put(0,0){\circle*{4}} \put(50,0){\circle*{4}}
\put(0,50){\circle*{4}} \put(50,50){\circle*{4}}
\put(0,0){\line(1,0){50}} \put(0,0){\line(0,1){50}}
\put(0,0){\line(1,1){50}} \put(0,50){\line(1,0){50}}
\put(0,50){\line(1,-1){50}} \put(50,0){\line(0,1){50}}
\put(25,50){\vector(1,0){2}} \put(15,15){\vector(1,1){2}}
\put(25,0){\vector(1,0){2}} \put(0,25){\vector(0,1){2}}
\put(15,35){\vector(1,-1){2}} \put(50,25){\vector(0,-1){2}}
\thicklines \textcolor{blue}{\put(0,0){\line(1,0){50}}
\put(0,0){\line(0,1){50}} \put(0,50){\line(1,0){50}}
\put(50,0){\line(0,1){50}}}}

\put(90,85){\put(0,0){\circle*{4}} \put(50,0){\circle*{4}}
\put(0,50){\circle*{4}} \put(50,50){\circle*{4}}
\put(0,0){\line(1,0){50}} \put(0,0){\line(0,1){50}}
\put(0,0){\line(1,1){50}} \put(0,50){\line(1,0){50}}
\put(0,50){\line(1,-1){50}} \put(50,0){\line(0,1){50}}
\put(25,50){\vector(1,0){2}} \put(15,15){\vector(1,1){2}}
\put(25,0){\vector(1,0){2}} \put(0,25){\vector(0,1){2}}
\put(15,35){\vector(-1,1){2}} \put(50,25){\vector(0,-1){2}}
\thicklines \textcolor{blue}{\put(0,0){\line(0,1){50}}
\put(0,0){\line(1,1){50}} \put(0,50){\line(1,-1){50}}
\put(50,0){\line(0,1){50}}}}

\put(160,85){\put(0,0){\circle*{4}} \put(50,0){\circle*{4}}
\put(0,50){\circle*{4}} \put(50,50){\circle*{4}}
\put(0,0){\line(1,0){50}} \put(0,0){\line(0,1){50}}
\put(0,0){\line(1,1){50}} \put(0,50){\line(1,0){50}}
\put(0,50){\line(1,-1){50}} \put(50,0){\line(0,1){50}}
\put(25,50){\vector(1,0){2}} \put(15,15){\vector(-1,-1){2}}
\put(25,0){\vector(1,0){2}} \put(0,25){\vector(0,1){2}}
\put(15,35){\vector(1,-1){2}} \put(50,25){\vector(0,-1){2}}
\thicklines \textcolor{blue}{\put(0,0){\line(0,1){50}}
\put(0,0){\line(1,1){50}} \put(0,50){\line(1,-1){50}}
\put(50,0){\line(0,1){50}}}}

\put(230,85){\put(0,0){\circle*{4}} \put(50,0){\circle*{4}}
\put(0,50){\circle*{4}} \put(50,50){\circle*{4}}
\put(0,0){\line(1,0){50}} \put(0,0){\line(0,1){50}}
\put(0,0){\line(1,1){50}} \put(0,50){\line(1,0){50}}
\put(0,50){\line(1,-1){50}} \put(50,0){\line(0,1){50}}
\put(25,50){\vector(1,0){2}} \put(15,15){\vector(1,1){2}}
\put(25,0){\vector(-1,0){2}} \put(0,25){\vector(0,1){2}}
\put(15,35){\vector(1,-1){2}} \put(50,25){\vector(0,-1){2}}
\thicklines \textcolor{blue}{\put(0,0){\line(1,0){50}}
\put(0,0){\line(1,1){50}} \put(0,50){\line(1,0){50}}
\put(0,50){\line(1,-1){50}}}}

\put(20,10){\put(0,0){\circle*{4}} \put(50,0){\circle*{4}}
\put(0,50){\circle*{4}} \put(50,50){\circle*{4}}
\put(0,0){\line(1,0){50}} \put(0,0){\line(0,1){50}}
\put(50,0){\line(0,1){50}} \qbezier(0,50)(25,60)(50,50)
\qbezier(0,50)(25,40)(50,50) \put(25,45){\vector(1,0){2}}
\put(25,55){\vector(-1,0){2}} \put(25,0){\vector(1,0){2}}
\put(0,25){\vector(0,-1){2}} \put(50,25){\vector(0,-1){2}}
\thicklines \textcolor{blue}{\put(0,0){\line(1,0){50}}
\put(0,0){\line(0,1){50}} \put(50,0){\line(0,1){50}}
\qbezier(0,50)(25,60)(50,50)}}

\put(90,10){\put(0,0){\circle*{4}} \put(50,0){\circle*{4}}
\put(0,50){\circle*{4}} \put(50,50){\circle*{4}}
\put(0,0){\line(1,0){50}} \put(0,0){\line(0,1){50}}
\put(50,0){\line(0,1){50}} \qbezier(0,50)(25,60)(50,50)
\qbezier(0,50)(25,40)(50,50) \put(25,45){\vector(1,0){2}}
\put(25,55){\vector(-1,0){2}} \put(25,0){\vector(1,0){2}}
\put(0,25){\vector(0,1){2}} \put(50,25){\vector(0,1){2}} \thicklines
\textcolor{blue}{\put(0,0){\line(1,0){50}} \put(0,0){\line(0,1){50}}
\put(50,0){\line(0,1){50}} \qbezier(0,50)(25,60)(50,50)}}

\put(160,10){\put(0,0){\circle*{4}} \put(50,0){\circle*{4}}
\put(0,50){\circle*{4}} \put(50,50){\circle*{4}}
\put(0,0){\line(1,0){50}} \put(0,0){\line(0,1){50}}
\put(50,0){\line(0,1){50}} \qbezier(0,50)(25,60)(50,50)
\qbezier(0,50)(25,40)(50,50) \put(25,45){\vector(1,0){2}}
\put(25,55){\vector(-1,0){2}} \put(25,0){\vector(1,0){2}}
\put(0,25){\vector(0,1){2}} \put(50,25){\vector(0,-1){2}}
\thicklines \textcolor{blue}{\put(0,0){\line(1,0){50}}
\put(0,0){\line(0,1){50}} \put(50,0){\line(0,1){50}}
\qbezier(0,50)(25,40)(50,50)}}

\put(230,10){\put(0,0){\circle*{4}} \put(50,0){\circle*{4}}
\put(0,50){\circle*{4}} \put(50,50){\circle*{4}}
\put(0,0){\line(1,0){50}} \put(0,0){\line(0,1){50}}
\put(50,0){\line(0,1){50}} \qbezier(0,50)(25,60)(50,50)
\qbezier(0,50)(25,40)(50,50) \put(25,45){\vector(1,0){2}}
\put(25,55){\vector(-1,0){2}} \put(25,0){\vector(-1,0){2}}
\put(0,25){\vector(0,1){2}} \put(50,25){\vector(0,-1){2}}
\thicklines \textcolor{blue}{\put(0,0){\line(1,0){50}}
\put(0,0){\line(0,1){50}} \put(50,0){\line(0,1){50}}
\qbezier(0,50)(25,60)(50,50)}}
\end{picture}

\small Figure~6. Orientations of $K_4$ and $\varTheta_{1,1,3}$.
\end{center}

\begin{proof}[{\bf Proof of Theorem \ref{p13small}.}]
Let $D$ be a $\overrightarrow{P_{1,3}}$-free extremal digraph on
$n\leq 8$ vertices. Then $a(D)\geq 2{\rm ex}(n,C_4)$. Let $G={\rm
Ung}(D)$, $G_1$ be the graph obtained from ${\rm Umg}(D)$ by
removing all edges of $C_2$, and $G_2$ be the subgraph of $G$ with
edge set $E(G)\setminus E(G_1)$. By Lemma~\ref{LeOrientation}, $G_1$
is $K_4$-free and every edge in $E(G_2)$ is not contained in a $C_4$
of $G$. Assume first that $E(G_1)\neq \emptyset$. By
Lemma~\ref{LeSubgraph}, $G_1$ has a $C_4$-free subgraph $H_1$ with
$e(H_1)>e(G_1)/2$. Now the union of $H_1$ and $G_2$ is a $C_4$-free
graph $H$ with $e(H)>a(D)/2$. It follows that
$\overleftrightarrow{H}$ is a $\overrightarrow{P_{1,3}}$-free
digraph with arc number greater than $D$, a contradiction.
Therefore, we conclude that $E(G_1)=\emptyset$ and $G=G_2$ is
$C_4$-free. Since $D$ is a extremal digraph, $e(G)=a(D)/2={\rm
ex}(n,C_4)$, which implies that $G\in {\rm EX}(n,C_4)$, and we are
done.
\end{proof}

\begin{proof}[{\bf Proof of Theorem~\ref{p13}.}]
We prove Theorem~\ref{p13} using induction on $n$.
When $n=9$, we first prove the following result:
\[
{\rm ex}(9,\overrightarrow{P_{1,3}})=27\  \text{and \ } {\rm
EX}(9,\overrightarrow{P_{1,3}})=\overrightarrow{\mathcal{S}_9}.
\]

Let $D\in {\rm EX}(9,\overrightarrow{P_{1,3}})$, then $a(D)\geq
a(\overrightarrow{S_9})=27$ since $\overrightarrow{S_9}$ is
$\overrightarrow{P_{1,3}}$-free. By Theorem~\ref{p13small}, we have
${\rm ex}(8,\overrightarrow{P_{1,3}})=22$, which implies that
$d_D(v)\ge 5$ for all $v\in V(D)$. Suppose now that there is $v_0\in
V(D)$ such that $D_1:=D-v_0$ is an extremal digraph in ${\rm
EX}(8,\overrightarrow{P_{1,3}})$, i.e., $D_1$ is the double
orientation of a graph in ${\rm EX}(8,C_4)$. Let $G={\rm Ung}(D)$,
then $G$ has to be $C_4$-free, for otherwise ${\rm Umg}(D)$ contains
a $\varTheta_{1,1,3}$ and $D$ contains a $\overrightarrow{P_{1,3}}$
by Lemma~\ref{LeOrientation}. For convenience, we define ${\rm
Inj}(G)$ as the~\emph{injective graph} of $G$, which is the graph on
$V(G)$ such that $uv\in E({\rm Inj}(G))$ if and only if $N_G(u)\cap
N_G(v)\neq\emptyset$. One can check that every graph in ${\rm
EX}(8,C_4)$ (see Figure~4) satisfies that the independent number of
its injective graph is $2$. Recall that $d_D(v_0)\geq 5$, implying
that $d_G(v_0)\geq 3$ (possibly $v$ is incident with some
$\overrightarrow{C_2}$ in $D$). This implies that $N_G(v_0)$
contains two vertices $v_1,v_2$ such that $v_1,v_2$ have a common
neighbor in ${\rm Ung}(D_1)$. Then $G$ contains a $C_4$, a
contradiction. Thus, we conclude that $a(D-v)<22$ for $v\in V(D)$.
An easy averaging argument shows that there exists $v'\in V(D)$ such
that $d_D(v')\le 6$. If $a(D)=28$, then $a(D-v')\geq 22$, a
contradiction. This implies that $D$ has exactly $27$ arcs and
$d_D(v)=6$ for all $v\in V(D)$.

In the following we will show that $D$ has no
$\overrightarrow{C_2}$. Let $\mathcal{C}$ denote the set of all
$\overrightarrow{C_2}$ in $D$. Suppose first that $|\mathcal{C}|\ge
5$. Let $D_1$ be the subgraph of $D$ by removing arcs of all
$\overrightarrow{C_2}$. Since $a(D)$ is odd, $D_1$ is not empty. Let
$G={\rm Ung}(D)$, $G_1={\rm Ung}(D_1)$, and $G_2$ be the subgraph of
$G$ with $E(G_2)=E(G)\setminus E(G_1)$, then
$e(G_1)=a(D)-2|\mathcal{C}|\le 17$. Note that $G_1$ must be
$K_4$-free. We here show that $G_1$ has a $C_4$-free subgraph $H_1$
with $e(H_1)>e(G_1)/2$. If $G_1$ is disconnected, then by applying
Lemma \ref{LeSubgraph} to each (nontrivial) component of $G_1$, we
can get a required subgraph. If $G_1$ has a vertex $v$ with
$d_{G_1}(v)\le 2$, then by Lemma~\ref{LeSubgraph}, $G_1-v$ has a
subgraph with edge more than $e(G_1-v)/2$, together with an edge
incident to $v$, we get a required subgraphs. Suppose that $G_1$ is
a connected graph with $d_{G_1}(v)\ge 3$ for all $v\in V(G_1)$,
which implies $G_1$ has two cycles of different lengths, one of
which is not a $C_4$. Taking such a cycle, by adding some edges we
can get a $C_4$-free subgraph with $9$ edges. Therefore, $G_1$ has a
$C_4$-free subgraph $H_1$ with $e(H_1)>e(G_1)/2$. Let $H$ be the
subgraph of $G$ with $E(H)=E(H_1)\cup E(G_2)$, then $H$ is a
$C_4$-free graph with $e(H)\geq 14>{\rm ex}(9,C_4)=13$ (see
\cite{ClaphamFlockhartSheehan}), a contradiction. Therefore, we have
$|\mathcal{C}|\le 4$.

Let $U \subset V(D)$ be the set of vertices that not incident to any
$\overrightarrow{C_2}$ in $D$, and $W=V(D)\setminus U$. Since
$|\mathcal{C}|\le 4$, we have $U\neq\emptyset$. Clearly ${\rm
Umg}(D)$ is connected, otherwise $D$ is not an extremal digraph for
$\overrightarrow{P_{1,3}}$. Thus there exists an edge $u_1w_1\in
E({\rm Umg}(D))$ with $u_1\in U$ and $w_1\in W$. Let
$\overrightarrow{C}:=w_1w_2w_1$ be a $\overrightarrow{C_2}$ in $D$
with $w_2\in W$. If $u_1,w_2$ has a common neighbor other than $w_1$
in ${\rm Umg}(D)$, then ${\rm Umg}(D)$ contains a
$\varTheta_{1,1,3}$, a contradiction. Recall that $d_D(v)=6$ for all
$v\in V(D)$. If $u_1w_2\notin E({\rm Umg}(D))$, then $u_1$ has at
least five neighbors other than $w_1$, and $w_2$ has at least two
neighbors other than $w_1$, which implyes that $u_1,w_2$ have a
common neighbor other than $w_1$, a contradiction. Now we conclude
that $u_1w_2\in E({\rm Umg}(D))$. Thus $u_1$ has at least four
neighbors other than $w_1,w_2$. In addition, $w_2$ is contained in
another $\overrightarrow{C_2}$, say $w_2w_3w_2$ with $w_3\in
W\setminus\{w_1\}$. By the similar analysis we have that $u_1w_3\in
E({\rm Umg}(D))$. In other words, $u_1,w_1,w_2,w_3$ are contained in
a $\varTheta_{1,1,3}$, a contradiction. Finally, we conclude that
$D$ is $\overrightarrow{C_2}$-free. Notice that ${\rm Ung}(D)$ is
$K_4$-free with $27$ edges, implying that $D$ is an orientation of
$T_{9,3}$.

Let $\{X,Y,Z\}$ be the balanced partition of ${\rm Umg}(D)$. Fix
arbitrary vertices $x_0\in X,y_0\in Y,z_0\in Z$. By symmetry, we
consider the following two cases: (1) $x_0\rightarrow
y_0,y_0\rightarrow z_0,z_0\rightarrow x_0$; (2) $x_0\rightarrow
z_0,y_0\rightarrow z_0,y_0\rightarrow x_0$. Set
\[\left\{\begin{array}{lll}
    X_1=X\cap N_D^+(z_0), & Y_1=Y\cap N_D^+(x_0), & Z_1=Z\cap N_D^+(y_0),\\
    X_2=X\cap N_D^-(z_0), & Y_2=Y\cap N_D^-(x_0), & Z_2=Z\cap N_D^-(y_0),
\end{array}\right.   \text{ \ for \ } (1)
\]
and
\[\left\{\begin{array}{lll}
    X_1=X\cap N_D^-(z_0), & Y_1=Y\cap N_D^+(x_0), & Z_1=Z\cap N_D^-(y_0),\\
    X_2=X\cap N_D^+(z_0), & Y_2=Y\cap N_D^-(x_0), & Z_2=Z\cap N_D^+(y_0),
\end{array}\right. \text{ \ for \ } (2).
\]
One can check that the partition
$\{X_1,X_2,Y_1,Y_2,Z_1,Z_2\}$ of $V(D)$ satisfies the
Properties~(\ref{P13:a})--(\ref{P13:b}) in the definition of
$\overrightarrow{S_n}$. Therefore, $D\in
\overrightarrow{\mathcal{S}_9}$.

Now let $n>9$. For simplicity, set $s(n)=a(\overrightarrow{S_n})$,
and assume that ${\rm ex}(n-1,\overrightarrow{P_{1,3}})=s(n-1)$,
${\rm
EX}(n-1,\overrightarrow{P_{1,3}})=\overrightarrow{\mathcal{S}_{n-1}}$.
We first show that every digraph $D$ on $n$ vertices with
$a(D)=s(n)+1$ contains a $\overrightarrow{P_{1,3}}$. If not, then we must have $d_D(v)\ge \lfloor\frac{2s(n)}{n}\rfloor+1$ for
every $v\in V(D)$: otherwise $D'=D-v$ is a digraph on $n-1$ vertices
with $a(D')\ge s(n-1)+1$, which contains a
$\overrightarrow{P_{1,3}}$. Thus, we have
\[
a(D)=\frac{1}{2}\sum_{v\in V(D)}d_D(v)\ge
\frac{n}{2}\left(\left\lfloor\frac{2s(n)}{n}\right\rfloor+1\right)>
s(n)+1
\]
for $n>9$, which contradicts $a(D)=s(n)+1$. Moreover, since all
digraphs $\overrightarrow{S_n}$ have no $\overrightarrow{P_{1,3}}$,
we get ${\rm ex}(n,\overrightarrow{P_{1,3}})=s(n)$.

Now let $D\in {\rm EX}(n,\overrightarrow{P_{1,3}})$. We will show that
$D\in\overrightarrow{\mathcal{S}_n}$. A simple averaging argument
shows that there exists $v_0\in V(D)$ such that $d_D(v_0)\le
\lfloor\frac{2s(n)}{n}\rfloor$. Set $D':=D-v_0$, then $a(D')\ge
s(n-1)$ (notice that $s(n)-s(n-1)=\lfloor\frac{2s(n)}{n}\rfloor$).
By the induction hypothesis, $D'$ can only be a graph in
$\overrightarrow{\mathcal{S}_{n-1}}$ and $d_D(v_0)=
\lfloor\frac{2s(n)}{n}\rfloor$.

Let $\{X,Y,Z\}$ be the (almost balanced) partition of $V(D')$. If
$v_0$ has neighbors in all the three sets $X,Y,Z$, then ${\rm
Ung}(D)$ contains a $K_4$ and $D$ contains a
$\overrightarrow{P_{1,3}}$, a contradiction. Therefore we assume
without loss of generality that $X\cup\{v_0\}$ is independent. Note
that $|X|\geq \lfloor\frac{n-1}{3}\rfloor$. Let $v_1$ be an
arbitrary vertex in $X$, then
$d_D(v_1)\leq\lceil\frac{2s(n-1)}{n-1}\rceil\leq\lfloor\frac{2s(n)}{n}\rfloor=d_D(v_0)$,
which implies that $D''=D-v_1$ is also a digraph in
$\overrightarrow{\mathcal{S}_{n-1}}$, and thus $D$ is an orientation
of $T_{n,3}$. We claim that one of the following statements holds:
(1) for every $u\in Y\cup Z$, $u\rightarrow v_1$ if and only if
$u\rightarrow v_0$; (2) for every $u\in Y\cup Z$, $u\rightarrow v_1$
if and only if $v_0\rightarrow u$. Suppose that neither (1) nor (2)
holds. Without loss of generality there are two vertices $u_0,u_1$,
such that $u_0\rightarrow v_1$, $u_0\rightarrow v_0$,
$u_1\rightarrow v_1$ and $v_1\rightarrow u_2$, then
$D[\{v_0,v_1,u_0,u_1\}]$ contains a $\overrightarrow{P_{1,3}}$, a
contradiction. Therefore, $D\in\overrightarrow{\mathcal{S}_n}$.
\end{proof}

Next we will prove Theorem~\ref{anticycle} using the transformation between digraph and balanced bipartite graph.
Given a labeled balanced bipartite graph $B=(X,Y;E)$ with $X=\{x_i:
i\in[1,n]\}, Y=\{y_i: i\in[1,n]\}$, let ${\rm Drp}(B)$ denote the
digraph with vertex set $V=\{v_i: i\in[1,n]\}$ and arc set
$A=\{v_iv_j: x_iy_j\in E\}$. We call ${\rm Drp}(B)$ the
\textit{directed representation} of $B$. We notice that we also
defined the bipartite representation ${\rm Brp}(D)$ of a digraph $D$
(see the definition before Theorem~\ref{anticycle}). Clearly,
$e({\rm Brp}(D))=a(D)$ for all digraph $D$; and if a labeled
bipartite graph $B$ satisfies $x_iy_i\notin E(B)$ for all
$i\in[1,n]$, then $a({\rm Drp}(B))=e(B)$.
%
%
%
%
%
%
%
%
%
%

For $k\in \mathbb{N}^*$, an orientation of $C_{2k}$ is called
\textit{anti-directed} if the orientation of each arc is opposite to
that of its predecessor, denoted by $\overrightarrow{C_{2k}^a}$.
Note that $\overrightarrow{K_{2,2}}=\overrightarrow{C_4^a}$. It is
not hard to see that every $\overrightarrow{C_{2k}^a}$ in a digraph
$D$ corresponds to a $C_{2k}$ in its bipartite representation ${\rm
Brp}(D)$.

Recall that ${\rm ex}(n,n; C_{2k})$ denote the maximum number of
edges in a $C_{2k}$-free bipartite graph with parts of equal size
$n$. By the construction, it is easy to obtain the following
observation.

\begin{observation}\label{anti-lemma}
For $n,k\in \mathbb{N}^*$ with $k\ge 2$,
\[
{\rm ex}(n,n;C_{2k})-n\le {\rm ex}(n,\overrightarrow{C_{2k}^a})\le
{\rm ex}(n,n;C_{2k}).
\]
\end{observation}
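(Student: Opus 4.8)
The plan is to shuttle between digraphs and balanced bipartite graphs using the two mutually inverse maps ${\rm Brp}(\cdot)$ and ${\rm Drp}(\cdot)$, together with three facts already recorded above the statement: $e({\rm Brp}(D))=a(D)$ for every digraph $D$; $a({\rm Drp}(B))=e(B)$ for every labelled balanced bipartite graph $B$ with no ``diagonal'' edge $x_iy_i$; and every $\overrightarrow{C_{2k}^a}$ in a digraph corresponds to a $C_{2k}$ in its bipartite representation. Note also that ${\rm Brp}(D)$ never has a diagonal edge (a digraph has no loops), and that for a diagonal-free labelled $B$ one has ${\rm Brp}({\rm Drp}(B))=B$.

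\textbf{Lower bound.} Take $B\in{\rm EX}(n,n;C_{2k})$ with parts $X=\{x_1,\dots,x_n\}$, $Y=\{y_1,\dots,y_n\}$, and delete from $B$ every edge of the form $x_iy_i$; this removes at most $n$ edges, so the resulting subgraph $B'$ is $C_{2k}$-free, diagonal-free, and satisfies $e(B')\ge{\rm ex}(n,n;C_{2k})-n$. Set $D':={\rm Drp}(B')$, so that $a(D')=e(B')$ and ${\rm Brp}(D')=B'$. If $D'$ contained a $\overrightarrow{C_{2k}^a}$, then $B'={\rm Brp}(D')$ would contain a $C_{2k}$, contradicting the choice of $B$. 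Hence $D'$ is $\overrightarrow{C_{2k}^a}$-free and ${\rm ex}(n,\overrightarrow{C_{2k}^a})\ge a(D')\ge{\rm ex}(n,n;C_{2k})-n$. This half uses only the ``$\Rightarrow$'' direction of the correspondence and is routine.

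\textbf{Upper bound.} Let $D\in{\rm EX}(n,\overrightarrow{C_{2k}^a})$ and put $B:={\rm Brp}(D)$, a bipartite graph on parts of size $n$ with $e(B)=a(D)$; it suffices to show $B$ is $C_{2k}$-free, since then $a(D)=e(B)\le{\rm ex}(n,n;C_{2k})$. Suppose $B$ contains a cycle $x_{a_1}y_{b_1}x_{a_2}y_{b_2}\cdots x_{a_k}y_{b_k}x_{a_1}$, where $a_1,\dots,a_k$ are pairwise distinct and $b_1,\dots,b_k$ are pairwise distinct. Pulling this cycle back through ${\rm Brp}$ yields the anti-directed closed walk $v_{a_1}\to v_{b_1}\leftarrow v_{a_2}\to v_{b_2}\leftarrow\cdots\leftarrow v_{a_1}$ in $D$; once one knows that its $2k$ vertices are pairwise distinct, $D$ contains a $\overrightarrow{C_{2k}^a}$, contradicting the choice of $D$.

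\textbf{Expected main obstacle.} The only step that is not bookkeeping is the last one: verifying that $v_{a_1},v_{b_1},\dots,v_{a_k},v_{b_k}$ are pairwise distinct, equivalently that $\{a_1,\dots,a_k\}\cap\{b_1,\dots,b_k\}=\emptyset$. Distinctness within the $a$'s and within the $b$'s is automatic, and diagonal-freeness of $B$ forbids $b_i\in\{a_i,a_{i+1}\}$; for $k=2$ this already forces index-disjointness — which is precisely why Theorem~\ref{anticycle} gives an \emph{equality} rather than the weaker bound here — but for $k\ge3$ a coincidence $a_i=b_j$ with $j\notin\{i-1,i\}$ is a priori not excluded and would only produce a closed walk, not a cycle, of length $2k$. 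Handling this is the crux: the intended route is to pick, among all $C_{2k}$'s of $B$, one with the fewest repeated indices and argue that no such repetition can survive; making that argument airtight (or replacing it with a direct extremal argument controlling $a(D)$) is where the real work of the upper bound lies.
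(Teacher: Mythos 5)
The lower bound half of your proposal is correct and is exactly the short argument the paper has in mind: strip the at most $n$ diagonal edges $x_iy_i$ from a graph in ${\rm EX}(n,n;C_{2k})$ and read the result back as a digraph via ${\rm Drp}$; any $\overrightarrow{C_{2k}^a}$ in that digraph would map under ${\rm Brp}$ to a $C_{2k}$, which is absent, so the digraph is $\overrightarrow{C_{2k}^a}$-free with at least ${\rm ex}(n,n;C_{2k})-n$ arcs.

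Your concern about the upper bound is well-founded, and the paper does not dispel it: Observation~\ref{anti-lemma} is stated only with the remark that it ``is easy to obtain'' from the one-directional correspondence (every $\overrightarrow{C_{2k}^a}$ in $D$ yields a $C_{2k}$ in ${\rm Brp}(D)$), which by itself gives only the lower bound. For $k\ge 3$ the converse genuinely fails. Concretely, the diagonal-free $6$-cycle $x_1y_2x_3y_1x_4y_3x_1$ corresponds to the arcs $v_1\to v_2$, $v_3\to v_2$, $v_3\to v_1$, $v_4\to v_1$, $v_4\to v_3$, $v_1\to v_3$ — an anti-directed closed walk on only the four vertices $v_1,v_2,v_3,v_4$, not a $\overrightarrow{C_6^a}$. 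Thus ``$D$ is $\overrightarrow{C_{2k}^a}$-free'' does not force ``${\rm Brp}(D)$ is $C_{2k}$-free,'' and $a(D)\le{\rm ex}(n,n;C_{2k})$ does not follow from the bijection as you (and the paper) set it up. Your own contrast with $k=2$ is exactly right: there, diagonal-freeness forces index-disjointness, which is why Theorem~\ref{anticycle} can prove a clean equivalence, and that mechanism simply does not extend. Your suggested repair — choosing among the $C_{2k}$'s of ${\rm Brp}(D)$ one with the fewest index repetitions — is left unexecuted, and it is not clear it can be pushed through; the paper supplies nothing that would. Since the observation is used only as motivation for a conjecture, the gap does not propagate into the paper's theorems, but as written the upper half of Observation~\ref{anti-lemma} is not established by either your argument or the paper's.
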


A \textit{perfect matching} in a graph is a collection of edges such
that every vertex is contained in exactly one of them. In the proof
of the Theorem~\ref{anticycle}, we will use a famous Hall's theorem.

\begin{theorem}[Hall~\cite{MR3822066}]\label{Hall}
Let $B$ be a bipartite graph with parts $X$ and $Y$ of equal size.
If for every $U\subseteq X$, $|N_B(U)|\ge |U|$, then $B$ contains a
perfect matching.
\end{theorem}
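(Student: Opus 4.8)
The plan is to prove Hall's theorem by induction on the common size $n=|X|=|Y|$, taking $n=0$ (the empty matching) as a vacuous base case. For the inductive step with $n\ge 1$, I would split into two cases according to how tight the Hall condition is, a dichotomy that is the standard engine of this argument.

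First I would treat the case where the condition holds with strict surplus everywhere, i.e.\ $|N_B(U)|\ge|U|+1$ for every nonempty proper subset $U\subsetneq X$. Here I would pick an arbitrary $x\in X$ and a neighbour $y\in N_B(x)$ (which exists since $|N_B(\{x\})|\ge 1$), delete both, and check that the bipartite graph $B-x-y$, with parts $X\setminus\{x\}$ and $Y\setminus\{y\}$ of equal size $n-1$, still satisfies Hall's condition: deleting $y$ decreases $|N_B(U)|$ by at most one, and by assumption there was a surplus of at least one to absorb this. By the induction hypothesis $B-x-y$ has a perfect matching $M'$, and $M'\cup\{xy\}$ is the desired matching of $B$.

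Next I would treat the complementary case, in which some nonempty proper subset $A\subsetneq X$ is \emph{tight}, meaning $|N_B(A)|=|A|$. I would split $B$ into $B_1:=B[A\cup N_B(A)]$ and $B_2:=B[(X\setminus A)\cup(Y\setminus N_B(A))]$, each having parts of equal size strictly less than $n$. That $B_1$ satisfies Hall's condition is immediate, since for $U\subseteq A$ we have $N_{B_1}(U)=N_B(U)$ and $|N_B(U)|\ge|U|$ by hypothesis. For $B_2$, the key observation is that for any $U\subseteq X\setminus A$ the set $N_B(A\cup U)$ is the \emph{disjoint} union of $N_B(A)$ and $N_{B_2}(U)=N_B(U)\setminus N_B(A)$; applying the hypothesis to $A\cup U$ then yields $|A|+|N_{B_2}(U)|=|N_B(A\cup U)|\ge|A\cup U|=|A|+|U|$, hence $|N_{B_2}(U)|\ge|U|$. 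By induction $B_1$ and $B_2$ have perfect matchings $M_1$ and $M_2$; since $M_1$ saturates $A\cup N_B(A)$ and $M_2$ saturates the remaining vertices, $M_1\cup M_2$ is a perfect matching of $B$.

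The main obstacle is precisely the verification that $B_2$ inherits Hall's condition in the tight case: one must see that the surplus required for a set $U$ inside $B_2$ is "paid for" by the surplus of $A\cup U$ in $B$ once the exactly $|A|$ vertices of $N_B(A)$ are cancelled from both sides, which is what the disjoint-union identity above makes precise. Should one prefer a non-inductive route, I would instead argue via augmenting paths: take a maximum matching $M$, suppose some $x\in X$ is $M$-unsaturated, let $S\subseteq X$ be the set of vertices reachable from $x$ along $M$-alternating paths, and note that maximality forces every vertex of $N_B(S)$ to be $M$-saturated and matched into $S\setminus\{x\}$, so that $|N_B(S)|=|S|-1<|S|$, contradicting Hall's condition; hence $M$ saturates $X$, and since $|X|=|Y|$ it is a perfect matching.
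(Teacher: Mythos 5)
Your proof is correct and complete: the induction on $n$ with the dichotomy between ``surplus everywhere'' and ``some tight set $A$'' is the classical Halmos--Vaughan argument, and the key step --- that $B_2$ inherits Hall's condition because the exactly $|A|$ vertices of $N_B(A)$ cancel from both sides of the inequality for $A\cup U$ --- is handled properly; the augmenting-path alternative you sketch is likewise a valid standard proof. There is nothing in the paper to compare against, since the paper states Hall's theorem as a known classical result with a citation to the literature and gives no proof of it.
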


\begin{proof}[{\bf Proof of Theorem \ref{anticycle}.}]
For every digraph $D$ and the bipartite representation ${\rm
Brp}(D)$, $D$ is $\overrightarrow{K_{2,2}}$-free if and only if
${\rm Brp}(D)$ is $C_4$-free. This implies that ${\rm
ex}(n,\overrightarrow{K_{2,2}})\leq{\rm ex}(n,n;C_4)$.

Now let $B=(X,Y;E)$ be an extremal bipartite graph in ${\rm
EX}(n,n;C_4)$ with $n\geq 3$. We claim that there is a label on
$V(B)$, say $X=\{x_i: i\in[n]\}$, $Y=\{y_i: i\in[n]\}$, such that
$x_iy_i\notin E$. Let $\widehat{B}$, the \emph{quasi-complement} of
$B$, be the bipartite graph on $V(B)$ such that $xy\in
E(\widehat{B})$ if and only if $xy\notin E(B)$ for any $x\in X,y\in
Y$. We show that $\widehat{B}$ has a perfect matching.

Suppose that $\widehat{B}$ does not have perfect matching. By
Theorem \ref{Hall}, there is a subset $U\subseteq X$ such that
$|N_{\widehat{B}}(U)|<|U|$. Since $|U|\le n$, we have
$|N_{\widehat{B}}(U)|\le n-1$. If $|N_{\widehat{B}}(U)|=n-1$, then
$U=X$, which implies that there is one vertex $y'\in Y$ such that
$N_{\widehat{B}}(y')=\emptyset$. Thus $N_{B}(y')=X$. Since $B$ is
$C_4$-free, we have $|N_{B}(y)|\le 1$ for every $y\in Y\setminus
\{y'\}$, implying that $e(B)\leq 2n-1$. Note that $C_{2n}$ is a
$C_4$-free bipartite graph for $n\ge 3$, implying $e(B)<{\rm
ex}(n,n;C_4)$, a contradiction. If $|U|=1$, then
$N_{\widehat{B}}(U)=\emptyset$, and we can analysis as above. Now
assume that $|U|\geq 2$ and $|N_{\widehat{B}}(U)|<n-1$, then there
are at least two vertices $y_1,y_2\in Y\setminus
N_{\widehat{B}}(U)$. This implies that $U\subseteq N_B(y_1)\cap
N_B(y_2)$. Since $|U|\ge 2$, $B$ contains a $C_4$, a contradiction.
Thus we conclude that $\widehat{B}$ has a perfect matching. Label
the vertices of $B$ such that $\{x_iy_i: i\in[1,n]\}$ is a perfect
matching of $\widehat{B}$. The directed representation ${\rm
Drp}(B)$ is a $\overrightarrow{K_{2,2}}$-free digraph with $a({\rm
Drp}(B))={\rm ex}(n,n;C_4)$. Thus we have ${\rm
ex}(n,\overrightarrow{K_{2,2}})\geq{\rm ex}(n,n;C_4)$.

Conversely, for every $\overrightarrow{K_{2,2}}$-free extremal
digraph $D$, we can get a $C_4$-free graph ${\rm Brp}(D)$ such that
$e({\rm Brp}(D))=a(D)={\rm ex}(n,n;C_4)$. Therefore, we have
${\rm{EX}}(n,\overrightarrow{K_{2,2}})=\{D: {\rm Brp}(D)\in
{\rm{EX}}(n,n; C_4)\}$ for $n\ge 3$.
\end{proof}

To finish this section, we shall prove Lemma \ref{LeSubgraph}. We
make use of the following results.

\begin{theorem}[Fan \cite{Fan}]\label{Fan}
  Let $G$ be a $2$-connected graph of order $n$. If for every two vertices
  $u,v$ with distance $2$, $\max\{d(u),d(v)\}\geq n/2$, then $G$ contains a Hamilton cycle.
\end{theorem}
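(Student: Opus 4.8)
The plan is to argue by contradiction via a longest-cycle analysis. Suppose $G$ is $2$-connected, satisfies the hypothesis, but has no Hamilton cycle. Let $C$ be a longest cycle of $G$, fix an orientation of it, and for $v\in V(C)$ write $v^{+}$, $v^{-}$ for the successor and predecessor of $v$ along $C$. Since $C$ is not Hamiltonian, $G-V(C)\neq\emptyset$; fix a component $H$ of $G-V(C)$ and let $N_C(H)$ be the set of vertices of $C$ with a neighbour in $H$. A single vertex of $N_C(H)$ would be a cut vertex separating $H$ from the rest of $C$, so $2$-connectivity gives $|N_C(H)|\ge 2$, and for any $u_1,u_2\in N_C(H)$ the connectedness of $H$ provides a path from $u_1$ to $u_2$ whose interior lies in $H$.

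First I would record the standard longest-cycle restrictions. (i) If $x\notin V(C)$ is adjacent to $u\in V(C)$, then $x$ is adjacent to neither $u^{+}$ nor $u^{-}$, since otherwise $x$ could be inserted into $C$; in particular $x$ and $u^{+}$ (and $x$ and $u^{-}$) are at distance exactly $2$, and, inserting a path through $H$ if necessary, no two consecutive vertices of $C$ both lie in $N_C(H)$. (ii) (Chord crossing.) If $x\notin V(C)$ is adjacent to both $u$ and $v$ on $C$ (necessarily non-consecutive, by (i)), then $u^{+}v^{+}\notin E(G)$ and $u^{-}v^{-}\notin E(G)$: otherwise, routing $u\to x\to v$ through $x$, reversing one arc of $C$, and using the chord $u^{+}v^{+}$ (resp.\ $u^{-}v^{-}$) yields a cycle on $V(C)\cup\{x\}$, contradicting maximality of $C$. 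Consequently, if $x\in H$ has $C$-neighbourhood $\{w_1,\dots,w_m\}$, then $\{x,w_1^{+},\dots,w_m^{+}\}$ is independent and each $w_i^{+}$ is at distance $2$ from $x$; the same holds with $w_i^{-}$.

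Next I would feed these distance-$2$ pairs into Fan's hypothesis and count. For every $i$, $\max\{d(x),d(w_i^{+})\}\ge n/2$. If some convenient $x\in H$ has $d(x)<n/2$, then each $d(w_i^{+})\ge n/2$; but $w_i^{+}$ is adjacent to none of $x,w_1^{+},\dots,w_m^{+}$, so $d(w_i^{+})\le n-1-m$ and hence $m\le n/2-1$. Doing the same with the $w_i^{-}$, and — crucially — repeating the analysis for a second vertex of $H$ obtained from a shortest $C$--$C$ path through $H$ (this is exactly what one needs when a vertex of $H$ has fewer than two neighbours on $C$, in which case one argues about $u^{+},u^{-}$ for the two endpoints $u$ of that path rather than about one vertex with two chords), one accumulates enough non-edges inside $V(C)$ to contradict the large degree bounds on the $w_i^{\pm}$; in the complementary case every vertex of $H$ has degree $\ge n/2$, so $H$ is large while each of its vertices still sends $\ge n/2-|H|$ edges onto $V(C)$, which (i) and (ii) again forbid. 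I would organise this as: (a) choose $H$ and a shortest $C$--$C$ path through it; (b) use (i), (ii) to show the endpoint neighbourhoods on $C$ are crossing-free, hence either small or squeezed to one side of an attachment vertex; (c) extract the numerical contradiction.

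The hard part will be step (c): Fan's condition bounds only the \emph{larger} of the two degrees at a distance-$2$ pair, so one cannot merely add degree inequalities as one does for Ore's theorem, and the argument must simultaneously track the low-degree vertices (those of $H$ and the successors/predecessors of attachment points) against the high-degree ones while exploiting the crossing restrictions on $C$. The genuinely fiddly case is when a vertex of $H$ has only one neighbour on $C$, forcing one to work with a whole path through $H$ and reason about its two endpoints instead of a single vertex carrying two chords; everything else is the routine longest-cycle toolkit.
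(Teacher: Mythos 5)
This statement is a classical theorem of Fan (1984); the paper cites it as background (reference \cite{Fan}) and does not give a proof of it, so there is no in-paper argument to compare against. Evaluated on its own terms, your proposal correctly identifies the standard longest-cycle framework and the key crossing lemmas: the observations you label (i) and (ii) are right, and the conclusion that $\{x,w_1^{+},\dots,w_m^{+}\}$ is independent with each $w_i^{+}$ at distance $2$ from $x$ is exactly the right launching point. But your step (c), which you flag as ``the hard part,'' is in fact the entire content of Fan's proof, and you do not carry it out. Unlike Ore-type conditions, Fan's hypothesis gives a lower bound only on $\max\{d(u),d(v)\}$ for each distance-$2$ pair, so one cannot simply add degree inequalities; one must decide, pair by pair, which endpoint carries the large degree, and the decisions interact. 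The genuinely delicate case --- a component $H$ of $G-V(C)$ whose vertices are joined to $C$ only through a longer $C$--$C$ path, forcing an analysis of both endpoints' neighbourhoods and their successors/predecessors simultaneously --- is precisely where Fan's original argument does its heavy lifting, and ``extract the numerical contradiction'' does not substitute for it. As written, the proposal is a correct roadmap with the destination marked but the route unbuilt; it would not stand as a proof.
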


\begin{theorem}[Gy\H{o}ri, Keszegh \cite{GyoriKeszegh}]\label{GyoriKeszegh}
  Every $K_4$-free graph with $n$ vertices and ${\rm ex}(n,K_3)+t$ edges
  contains at least $t$ edge-disjoint triangles.
\end{theorem}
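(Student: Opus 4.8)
I would argue by induction on $n$, keeping $t$ arbitrary at each stage; the inductive statement is that every $K_4$-free graph on $n'<n$ vertices with ${\rm ex}(n',K_3)+t'$ edges contains $t'$ edge-disjoint triangles (recall ${\rm ex}(n,K_3)=\lfloor n^2/4\rfloor$ by Theorem~\ref{K_k-free} with $k=2$). Fix a $K_4$-free $G$ on $n$ vertices with $e(G)={\rm ex}(n,K_3)+t$ and $t\ge 1$. The first reduction uses the elementary identity ${\rm ex}(n,K_3)-{\rm ex}(n-1,K_3)=\lfloor n/2\rfloor$: if $G$ has a vertex $v$ with $d(v)\le\lfloor n/2\rfloor$, then $G-v$ is $K_4$-free on $n-1$ vertices with $e(G-v)\ge{\rm ex}(n,K_3)+t-\lfloor n/2\rfloor={\rm ex}(n-1,K_3)+t$ edges, so the inductive hypothesis applied to $G-v$ already produces $t$ edge-disjoint triangles (which live in $G$ as well). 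Hence we may assume $\delta(G)\ge\lfloor n/2\rfloor+1$, and in particular (by a degree count) $t=\Omega(n)$, so there is a lot of room to work with.

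\textbf{The dense case.} Here every edge $uv$ lies in a triangle, since $|N(u)\cap N(v)|\ge d(u)+d(v)-n\ge 1$, and $N(u)\cap N(v)$ is independent (else a $K_4$ appears). I would take a maximum edge-disjoint triangle packing $\mathcal T$ and set $H=G-E(\mathcal T)$; maximality forces $H$ to be triangle-free, so $e(H)\le{\rm ex}(n,K_3)$ and therefore $3|\mathcal T|=e(G)-e(H)\ge t$ — the cheap bound. The real content is to improve $|\mathcal T|\ge t/3$ to $|\mathcal T|\ge t$, i.e.\ to show that one can effectively keep back two of the three edges of each triangle. Assuming $|\mathcal T|\le t-1$, I would run a rotation/augmenting argument on $\mathcal T$: for a triangle $xyz\in\mathcal T$ with a common $H$-neighbour $w$ of $x$ and $y$, the packing $(\mathcal T\setminus\{xyz\})\cup\{xyw\}$ is again maximum while the edges $xz,yz$ are released into the remainder graph; iterating such moves, and using that $\delta(G)\ge\lfloor n/2\rfloor+1$ makes all the relevant common neighbourhoods large while $K_4$-freeness both keeps them independent and forbids two triangles of a packing from sharing a common completing vertex, one should be forced into a rigid local configuration from which an extra edge-disjoint triangle can be extracted, contradicting maximality. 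A parallel device is a maximum cut $(A,B)$ of $G$: since $e(A,B)\le\lfloor n^2/4\rfloor$ there are at least $t$ edges inside the two parts, and in a maximum cut each vertex has at least as many neighbours on the opposite side as on its own, which supplies, for each such internal edge, room on the opposite side to complete it to a triangle while dodging previously used edges.

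\textbf{Main obstacle.} The hard part is precisely this augmentation in the dense case: turning the cheap estimate $|\mathcal T|\ge t/3$ into the sharp $|\mathcal T|\ge t$ requires careful bookkeeping of how the triangles of the packing overlap in vertices and of how the triangle-free remainder $H$ embeds across them, and the $K_4$-free hypothesis is exactly what keeps this case analysis finite (independent common neighbourhoods, no two packing triangles completed by one vertex, etc.). A complete argument should also pin down the extremal graphs — essentially the Tur\'an graph $T_{n,2}$ with a triangle-free graph on $t$ edges added inside one part — both as a sanity check on the bound and as the base configuration that the exchange moves aim at. This delicate packing argument is the substantive step, and is the core of the Gy\H{o}ri--Keszegh proof.
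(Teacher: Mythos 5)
The paper does not prove this statement at all: it is imported as a black-box citation of Gy\H{o}ri and Keszegh and used only inside the proof of Lemma~\ref{LeSubgraph}, so there is no in-paper argument to compare yours against. Judged on its own, your proposal is not a proof but a plan with the central step missing, and you say so yourself. The parts you do carry out are fine: the vertex-deletion reduction via ${\rm ex}(n,K_3)-{\rm ex}(n-1,K_3)=\lfloor n/2\rfloor$ correctly reduces to the case $\delta(G)\ge\lfloor n/2\rfloor+1$ (whence $t=\Omega(n)$), every edge then lies in a triangle with an independent common neighbourhood, and the maximum-packing argument with triangle-free remainder $H$ gives $3|\mathcal T|=e(G)-e(H)\ge t$, i.e.\ $|\mathcal T|\ge t/3$. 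But the entire content of the theorem is the factor-of-three improvement from $t/3$ to $t$, and for that you offer only the assertion that a ``rotation/augmenting argument'' should force ``a rigid local configuration from which an extra edge-disjoint triangle can be extracted.'' No such configuration is identified, no invariant is specified for the exchange moves, and no termination or counting argument is given; your own ``Main obstacle'' paragraph concedes that this is ``the substantive step.'' A proof that defers its substantive step is a gap, not a proof.

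Two of the supporting claims in the sketch are also shaky as stated. First, ``$K_4$-freeness \ldots forbids two triangles of a packing from sharing a common completing vertex'' is not a consequence of $K_4$-freeness in any form you have made precise: two edge-disjoint triangles may certainly share a vertex in a $K_4$-free graph (e.g.\ a bowtie), and what $K_4$-freeness actually gives is only that $N(u)\cap N(v)$ is independent for each edge $uv$. Second, the maximum-cut device supplies, for each internal edge $uv$ of a part, a large set of potential apices on the other side, but ``dodging previously used edges'' across all $t$ internal edges simultaneously is exactly the packing problem you are trying to solve, so this paragraph restates the goal rather than advancing it. To repair the proposal you would need to either reproduce the actual Gy\H{o}ri--Keszegh packing argument or find a genuinely different route to the sharp count; as written, the proposal establishes only the weaker bound of $\lceil t/3\rceil$ edge-disjoint triangles.
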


A graph $G$ is \emph{perfect} if for every induced subgraph $H$ of
$G$, $\chi(H)=\omega(H)$, where $\chi(H)$ is the chromatic number of
$H$ and $\omega(H)$ is the clique number of $H$.

\begin{theorem}[Chudnovsky et al. \cite{Chudnovsky}]\label{Perfect}
A graph is perfect if and only if it contains no odd cycle of length
at least five, or its complement, as an induced subgraph.
\end{theorem}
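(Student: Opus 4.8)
The final statement is the \emph{Strong Perfect Graph Theorem}, so there is no short self-contained argument; what I would present is the usual split into an elementary forward implication and the deep structural converse of Chudnovsky, Robertson, Seymour and Thomas. For the forward implication: if $G$ has an induced odd cycle $C_{2k+1}$ with $k\ge 2$, then $\omega(C_{2k+1})=2<3=\chi(C_{2k+1})$, so this induced subgraph is imperfect and $G$ is not perfect; and if $\overline{G}$ has such an induced cycle, i.e.\ $G$ has an induced $\overline{C_{2k+1}}$, then $\omega(\overline{C_{2k+1}})=\alpha(C_{2k+1})=k$ while $\chi(\overline{C_{2k+1}})=k+1$ (a proper colouring of $\overline{C_{2k+1}}$ is a partition of the $2k+1$ vertices of the cycle into cliques of the cycle, i.e.\ into vertices and edges, and a maximum matching of an odd cycle leaves one vertex uncovered), so again $G$ has an imperfect induced subgraph. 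Hence a perfect graph contains no induced odd hole and no induced odd antihole.

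For the converse one must show that every \emph{Berge graph} (no odd hole, no odd antihole) is perfect, and I would argue by contradiction from a minimal imperfect Berge graph $G$, so that every proper induced subgraph of $G$ is perfect. The plan rests on three ingredients. First, a family of \emph{basic} perfect graphs --- bipartite graphs, line graphs of bipartite graphs, the complements of these, and the double split graphs --- each checked directly to be perfect, so $G$ is not basic. Second, the \emph{decomposition theorem}: every Berge graph is basic, or admits a $2$-join, or its complement admits a $2$-join, or it admits a balanced skew partition (the original proof also allows a homogeneous pair, later shown superfluous by Chudnovsky). Third, the classical lemmas that a minimal imperfect graph admits none of these decompositions: no $2$-join (Cornu\'ejols--Cunningham, via the blocks of the join and the substitution/replication lemma), hence by complementation no complement $2$-join, and no balanced skew partition (a parity/cardinality argument on the partition in the spirit of Chv\'atal and Lov\'asz). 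Putting the three together contradicts the existence of $G$.

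The overwhelming bulk of the difficulty is the decomposition theorem, and that is the step I expect to be the main obstacle. I would prove it by an extensive case analysis according to which ``templates'' $G$ or $\overline{G}$ contains --- a prism (two triangles joined by three vertex-disjoint paths), a pyramid, a wheel, or a long induced hole with prescribed attachments --- pushing each local configuration into one of the four global outcomes. The workhorse is the Roussel--Rubio lemma on how a connected subgraph attaches to an anticonnected set, invoked repeatedly to propagate local structure, and the whole argument is best carried out in the language of ``trigraphs'' so that vertex substitutions can be peeled off cleanly and the induction closes. Everything else --- the basic-class verifications and the three ``no-decomposition'' lemmas --- is comparatively routine once that structural classification is in hand.
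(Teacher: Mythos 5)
The paper does not prove this statement at all: it is the Strong Perfect Graph Theorem, imported verbatim from Chudnovsky--Robertson--Seymour--Thomas and used later only as a black box (to conclude that a $K_4$-free graph with no induced $C_5$, $C_7$ or $\overline{C_7}$ on $8$ vertices is $3$-colourable). So there is no in-paper argument to compare yours against. Your forward implication is correct and complete: $\chi(C_{2k+1})=3>2=\omega(C_{2k+1})$ for $k\ge 2$, and for the antihole the computation $\omega(\overline{C_{2k+1}})=\alpha(C_{2k+1})=k$ versus $\chi(\overline{C_{2k+1}})=k+1$ (covering an odd cycle by vertices and edges needs at least $k+1$ cliques) is exactly right, so a perfect graph is Berge.

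The converse, however, remains a road map rather than a proof. You correctly name the architecture of the published argument --- the basic classes, the decomposition theorem for Berge graphs ($2$-join, complement $2$-join, balanced skew partition, with homogeneous pairs later eliminated), the lemmas that a minimal imperfect graph admits none of these, and the Roussel--Rubio lemma as the engine of the case analysis --- but the decomposition theorem itself is asserted, not established, and it is where essentially all of the roughly $150$ pages of the original proof live. That is not a criticism specific to your write-up, since no short self-contained proof is known; but as submitted this is a survey of the literature proof, not a proof. In the context of this paper the right move is the one the authors make: cite the theorem and move on.
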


Now we are ready to prove Lemma~\ref{LeSubgraph}. A \emph{chord} of
a cycle is an edge connecting two vertices of the cycle but not an
edge of the cycle. Given a cycle $C$ of a graph $G$, for $i\in
\mathbb{N}^*$, an \textit{$i$-chord} of $C$ is a chord $uv$ such
that $v$ and $u$ has distance $i$ in $C$; an \textit{$i$-bridge} of
$C$ is a path $P:=uwv$ with $u,v\in V(C)$, $w\notin V(C)$ and $v, u$
has distance $i$ in $C$.

\begin{proof}[{\bf Proof of Lemma \ref{LeSubgraph}.}]
For every graph $G$ on $n\le 3$ vertices, $G$ clearly satisfies the
result, since $G$ is both $K_4$-free and $C_4$-free. Suppose now
there are some $K_4$-free graphs on $4\le n\le 8$ vertices that do
not satisfy the result, and choose a counterexample graph $G$ with
minimal order. First we claim $G$ is $2$-connected and $d_G(v)\ge 3$
for all $v\in V(G)$. Indeed, if $G$ is not 2-connected, then each
(nontrivial) block $G_i$ of $G$ contains a $C_4$-free subgraph $H_i$
with $e(H_i)>e(G_i)/2$. Thus the union of all such $H_i$ form a
required subgraph of $G$. Second, if there is $v\in V(G)$ with
$d_G(v)=2$, then $G_1:=G-v$ has a $C_4$-free subgraph $H_1$ with
$e(H_1)>e(G_1)/2=e(G)/2-1$. Thus the graph $H$ consisting of $H_1$
and one edge incident with $v$ is a required subgraph of $G$.

Next, by considering a longest path and the neighborhood of its
end-vertices, one can see that $G$ has two cycles of different
lengths, one of which is not a $C_4$. Taking such a cycle by adding
edges we can get a $C_4$-free subgraph $H$ of $G$ with $e(H)=n$,
which implies that $e(G)\geq 2n$. Recall that $G$ is $K_4$-free, by
Theorem \ref{K_k-free}, we infer that the counterexample graph $G$
can only be in the following cases: $n=6$ and $e(G)=12$; $n=7$ and
$e(G)\in[14,16]$; or $n=8$ and $e(G)\in[16,21]$.

If $n=6$ and $e(G)=12$, then $G$ can only be the Tur\'an graph
$T_{6,3}$. Clearly $G$ has a $C_4$-free subgraph with $7$ edges (see
Figure~7). Similarly, if $n=7$ and $e(G)=16$, then $G$ is also the
Tur\'an graph $T_{7,3}$. One can check that $G$ has a $C_4$-free
subgraph with $9$ edges (see Figure~7). For $n=7$ and $e(G)=14$ or
$15$, by Theorem \ref{GyoriKeszegh} $G$ has two edge-disjoint
triangles. Taking such two triangles by adding edges we can get a
$C_4$-free subgraph with 8 edges. So in the following we consider
the case $n=8$.

\begin{center}
\begin{picture}(380,120)
\put(70,60){\put(0,-50){\circle*{4}} \put(45,-25){\circle*{4}}
\put(45,25){\circle*{4}} \put(0,50){\circle*{4}}
\put(-45,25){\circle*{4}} \put(-45,-25){\circle*{4}}
\put(0,-50){\line(9,5){45}} \put(45,-25){\line(0,1){50}}
\put(45,25){\line(-9,5){45}} \put(0,50){\line(-9,-5){45}}
\put(-45,25){\line(0,-1){50}} \put(-45,-25){\line(9,-5){45}}
\put(0,-50){\line(3,5){45}} \put(45,-25){\line(-3,5){45}}
\put(45,25){\line(-1,0){90}} \put(0,50){\line(-3,-5){45}}
\put(-45,25){\line(3,-5){45}} \put(-45,-25){\line(1,0){90}}
\textcolor{blue}{\thicklines \put(0,-50){\line(9,5){45}}
\put(45,25){\line(-9,5){45}} \put(0,50){\line(-9,-5){45}}
\put(-45,25){\line(0,-1){50}} \put(-45,-25){\line(9,-5){45}}
\put(45,25){\line(-1,0){90}} \put(-45,-25){\line(1,0){90}} } }

\put(190,60){\put(0,0){\circle*{4}} \put(0,-50){\circle*{4}}
\put(45,-25){\circle*{4}} \put(45,25){\circle*{4}}
\put(0,50){\circle*{4}} \put(-45,25){\circle*{4}}
\put(-45,-25){\circle*{4}} \put(0,-50){\line(9,5){45}}
\put(45,-25){\line(0,1){50}} \put(45,25){\line(-9,5){45}}
\put(0,50){\line(-9,-5){45}} \put(-45,25){\line(0,-1){50}}
\put(-45,-25){\line(9,-5){45}} \put(0,-50){\line(3,5){45}}
\put(45,-25){\line(-3,5){45}} \put(45,25){\line(-1,0){90}}
\put(0,50){\line(-3,-5){45}} \put(-45,25){\line(3,-5){45}}
\put(-45,-25){\line(1,0){90}} \put(0,0){\line(9,5){45}}
\put(0,0){\line(-9,5){45}} \put(0,0){\line(-9,-5){45}}
\put(0,0){\line(9,-5){45}} \textcolor{blue}{\thicklines
\put(0,-50){\line(9,5){45}} \put(45,25){\line(-9,5){45}}
\put(0,50){\line(-9,-5){45}} \put(-45,25){\line(0,-1){50}}
\put(-45,-25){\line(9,-5){45}} \put(45,25){\line(-1,0){90}}
\put(-45,-25){\line(1,0){90}} \put(0,0){\line(-9,5){45}}
\put(0,0){\line(-9,-5){45}} } }

\put(310,60){\put(0,-50){\circle*{4}} \put(0,50){\circle*{4}}
\put(50,0){\circle*{4}} \put(-50,0){\circle*{4}}
\put(-35,-35){\circle*{4}} \put(-35,35){\circle*{4}}
\put(35,-35){\circle*{4}} \put(35,35){\circle*{4}}
\put(-50,0){\line(1,0){100}} \put(50,0){\line(-3,7){15}}
\put(35,35){\line(-7,3){35}} \put(0,50){\line(-7,-3){35}}
\put(-35,35){\line(-3,-7){15}} \put(-35,-35){\line(-3,7){15}}
\put(-35,-35){\line(0,1){70}} \put(-35,-35){\line(7,17){35}}
\put(-35,-35){\line(1,1){70}} \put(-35,-35){\line(17,7){85}}
\put(0,-50){\line(-1,1){50}} \put(0,-50){\line(-7,17){35}}
\put(0,-50){\line(0,1){100}} \put(0,-50){\line(7,17){35}}
\put(0,-50){\line(1,1){50}} \put(35,-35){\line(-17,7){85}}
\put(35,-35){\line(-1,1){70}} \put(35,-35){\line(-7,17){35}}
\put(35,-35){\line(0,1){70}} \put(35,-35){\line(3,7){15}}
\textcolor{blue}{\thicklines \put(-50,0){\line(1,0){100}}
\put(50,0){\line(-3,7){15}} \put(35,35){\line(-7,3){35}}
\put(0,50){\line(-7,-3){35}} \put(-35,35){\line(-3,-7){15}}
\put(-35,-35){\line(-3,7){15}} \put(-35,-35){\line(0,1){70}}
\put(0,-50){\line(-7,17){35}} \put(0,-50){\line(0,1){100}}
\put(35,-35){\line(-7,17){35}} \put(35,-35){\line(0,1){70}}} }
\end{picture}

\small Figure~7. Graphs $T_{6,3}$, $T_{7,3}$,
$C_5\vee\overline{K_3}$, and their desired $C_4$-free subgraphs.
\end{center}

\underline{Case A. $e(G)=20$ or 21}. In this case we need find a
$C_4$-free subgraphs with 11 edges. We only need to deal with the
case $e(G)=20$. We claim that $G$ can not contain induced $C_5$,
$C_7$ and $\overline{C_7}$. If $G$ has an induced $C_7$, then
$e(G)\leq 7+7<20$, a contradiction. If $G$ has an induced
$\overline{C_7}$, then $d_G(v)\le 4$ for the vertex $v$ outside the
$\overline{C_7}$; for otherwise $G$ contains a $K_4$. It follows
that $e(G)\leq 14+4<20$, a contradiction. Furthermore, suppose that
$G$ has an induced $C_5$, denoted by $C$. Set $V':=V(G)\setminus
V(C)$. If $e(G[V'])=3$, then $|N_G(v)\cap V'|\le 2$ for all $v\in
V(C)$, for otherwise $G$ contains a $K_4$. It follows that $e(G)\leq
5+10+3<20$, a contradiction. Suppose now $e(G[V'])=1$ or 2. Let
$v_1v_2\in E(G)$ with $v_1,v_2\in V'$. Then $|N_G(v_1)\cap
V(C)|+|N_G(v_2)\cap V(C)|\leq 7$; for otherwise $G$ has a $K_4$.
This implies that $e(G)\leq 5+2+7+5<20$, a contradiction. Now we
suppose $e(G[V'])=0$, then $e(G)\leq 5+15=20$. This implies that $G$
is the join of $C_5$ and $\overline{K_3}$. One can check that $G$
has a $C_4$-free subgraph with $11$ edges (see Figure~7). Now we
conclude that $G$ has no induced copies of $C_5,C_7$ or
$\overline{C_7}$.

By Theorem~\ref{Perfect}, we obtain that $G$ is a perfect graph.
Recall that $G$ is also $K_4$-free. We have $\chi(G)\leq 3$, i.e.,
$G$ is a subgraph of a complete $3$-partite graph $K_{r,s,t}$ with
$r+s+t=8$. Note that $e(G)=20\leq e(K_{r,s,t})=rs+rt+st$. One can
compute that $K_{r,s,t}$ can only be $K_{2,3,3}$ or $K_{2,2,4}$.
Thus $G$ is either $K_{2,2,4}$ or one graph obtained from
$K_{2,3,3}$ by removing an edge (there are two non-isomorphic such
graphs). One can check that $G$ has a $C_4$-free subgraph with $11$
edges for each case (see Figure~8).

\begin{center}
\begin{picture}(380,100)\label{P13}

\put(20,10){\put(0,20){\circle*{4}} \put(20,0){\circle*{4}}
\put(80,0){\circle*{4}} \put(100,20){\circle*{4}}
\put(20,80){\circle*{4}} \put(40,80){\circle*{4}}
\put(60,80){\circle*{4}} \put(80,80){\circle*{4}}
\put(0,20){\line(1,0){100}} \put(0,20){\line(4,-1){80}}
\put(20,0){\line(1,0){60}} \put(20,0){\line(4,1){80}}
\put(0,20){\line(1,3){20}} \put(0,20){\line(2,3){40}}
\put(0,20){\line(1,1){60}} \put(0,20){\line(4,3){80}}
\put(20,0){\line(0,1){80}} \put(20,0){\line(1,4){20}}
\put(20,0){\line(1,2){40}} \put(20,0){\line(3,4){60}}
\put(100,20){\line(-1,3){20}} \put(100,20){\line(-2,3){40}}
\put(100,20){\line(-1,1){60}} \put(100,20){\line(-4,3){80}}
\put(80,0){\line(0,1){80}} \put(80,0){\line(-1,4){20}}
\put(80,0){\line(-1,2){40}} \put(80,0){\line(-3,4){60}}
\textcolor{blue}{\thicklines \put(0,20){\line(4,-1){80}}
\put(20,0){\line(1,0){60}} \put(20,0){\line(4,1){80}}
\put(0,20){\line(1,3){20}} \put(0,20){\line(2,3){40}}
\put(20,0){\line(1,2){40}} \put(20,0){\line(3,4){60}}
\put(100,20){\line(-1,3){20}} \put(100,20){\line(-4,3){80}}
\put(80,0){\line(-1,4){20}} \put(80,0){\line(-1,2){40}} }}

\put(140,10){\put(0,40){\circle*{4}} \put(10,20){\circle*{4}}
\put(20,0){\circle*{4}} \put(80,0){\circle*{4}}
\put(90,20){\circle*{4}} \put(100,40){\circle*{4}}
\put(35,80){\circle*{4}} \put(65,80){\circle*{4}}
\put(0,40){\line(1,0){100}} \put(0,40){\line(9,-2){90}}
\put(0,40){\line(2,-1){80}} \put(10,20){\line(9,2){90}}
\put(10,20){\line(1,0){80}} \put(10,20){\line(7,-2){70}}
\put(20,0){\line(2,1){80}} \put(20,0){\line(7,2){70}}
\put(35,80){\line(-7,-8){35}} \put(35,80){\line(-5,-12){25}}
\put(35,80){\line(-3,-16){15}} \put(35,80){\line(9,-16){45}}
\put(35,80){\line(11,-12){55}} \put(35,80){\line(13,-8){65}}
\put(65,80){\line(-13,-8){65}} \put(65,80){\line(-11,-12){55}}
\put(65,80){\line(-9,-16){45}} \put(65,80){\line(3,-16){15}}
\put(65,80){\line(5,-12){25}} \put(65,80){\line(7,-8){35}}
\textcolor{blue}{\thicklines \put(0,40){\line(2,-1){80}}
\put(10,20){\line(7,-2){70}} \put(20,0){\line(2,1){80}}
\put(20,0){\line(7,2){70}} \put(35,80){\line(-5,-12){25}}
\put(35,80){\line(-3,-16){15}} \put(35,80){\line(9,-16){45}}
\put(35,80){\line(11,-12){55}} \put(65,80){\line(-13,-8){65}}
\put(65,80){\line(-9,-16){45}} \put(65,80){\line(7,-8){35}} }
\put(10,0){$u$} \put(85,0){$v$} }

\put(260,10){\put(0,40){\circle*{4}} \put(10,20){\circle*{4}}
\put(20,0){\circle*{4}} \put(80,0){\circle*{4}}
\put(90,20){\circle*{4}} \put(100,40){\circle*{4}}
\put(35,80){\circle*{4}} \put(65,80){\circle*{4}}
\put(0,40){\line(1,0){100}} \put(0,40){\line(9,-2){90}}
\put(0,40){\line(2,-1){80}} \put(10,20){\line(9,2){90}}
\put(10,20){\line(1,0){80}} \put(10,20){\line(7,-2){70}}
\put(20,0){\line(2,1){80}} \put(20,0){\line(7,2){70}}
\put(20,0){\line(1,0){60}} \put(35,80){\line(-5,-12){25}}
\put(35,80){\line(-3,-16){15}} \put(35,80){\line(9,-16){45}}
\put(35,80){\line(11,-12){55}} \put(35,80){\line(13,-8){65}}
\put(65,80){\line(-13,-8){65}} \put(65,80){\line(-11,-12){55}}
\put(65,80){\line(-9,-16){45}} \put(65,80){\line(3,-16){15}}
\put(65,80){\line(5,-12){25}} \put(65,80){\line(7,-8){35}}
\textcolor{blue}{\thicklines \put(0,40){\line(2,-1){80}}
\put(10,20){\line(7,-2){70}} \put(20,0){\line(2,1){80}}
\put(20,0){\line(7,2){70}} \put(35,80){\line(-5,-12){25}}
\put(35,80){\line(-3,-16){15}} \put(35,80){\line(9,-16){45}}
\put(35,80){\line(11,-12){55}} \put(65,80){\line(-13,-8){65}}
\put(65,80){\line(-9,-16){45}} \put(65,80){\line(7,-8){35}} }
\put(25,77.5){$u$} \put(-2.5,45){$v$}}
\end{picture}

\small Figure 8. Graph $K_{2,2,4}$ and the graphs obtained from
$K_{2,3,3}$ by removing one edge $uv$.
\end{center}

\underline{Case B. $e(G)=18$ or $19$}. In this case we need find a
$C_4$-free subgraphs with $10$ edges. We only deal with the case
$e(G)=18$. We first show that {$G$ contains a Hamilton cycle.}
Recall that $d_G(v)\ge 3$ for all $v\in G$. If there are two
non-adjacent vertices $v_1,v_2\in V(G)$ such that
$d_G(v_1)=d_G(v_2)=3$, then $F:=G[V(G)\setminus \{v_1,v_2\}]$ has
$6$ vertex and $12$ edges, which can only be a $T_{6,3}$. It follows
that $G[N_G(v_i)]$ has at least two edges for $i\in [1,2]$. Let
$e_1\in E(G[N_G(v_1)])$ and $e_2\in E(G[N_G(v_2)])$ such that
$e_1\neq e_2$. {It is easy to check that $T_{6,3}$ has a Hamilton
cycle passing through any two edges (see Figure 9).} Let $C'$ be a
Hamilton cycle of $F$ passing through $e_1,e_2$. By replacing
$e_1,e_2$ with two 1-bridges of $C'$ passing through $v_1,v_2$,
respectively, we can get a Hamilton cycle of $G$. If for each two
non-adjacent vertices $u,v\in V(G)$, $\max\{d(u),d(v)\}\geq 4$, then
$G$ also contains a Hamilton cycle by Theorem~\ref{Fan}.

\begin{center}
\begin{picture}(420,80)

\put(20,10){\put(0,20){\circle*{4}} \put(20,0){\circle*{4}}
\put(50,0){\circle*{4}} \put(70,20){\circle*{4}}
\put(20,60){\circle*{4}} \put(50,60){\circle*{4}}
\put(0,20){\line(1,0){70}} \put(0,20){\line(5,-2){50}}
\put(20,0){\line(5,2){50}} \put(20,0){\line(1,0){30}}
\put(20,60){\line(-1,-2){20}} \put(20,60){\line(0,-1){60}}
\put(20,60){\line(1,-2){30}} \put(20,60){\line(5,-4){50}}
\put(50,60){\line(-5,-4){50}} \put(50,60){\line(-1,-2){30}}
\put(50,60){\line(0,-1){60}} \put(50,60){\line(1,-2){20}}
\textcolor{blue}{\thicklines \put(20,60){\line(0,-1){60}}
\put(20,60){\line(5,-4){50}} \put(50,60){\line(-5,-4){50}}
\put(50,60){\line(0,-1){60}} \put(0,20){\line(1,0){70}}
\put(20,0){\line(1,0){30}} } \put(7,0){$u_1$} \put(52,0){$v_1$}
\put(-10,25){$u_2$} \put(70,25){$v_2$} }

\put(120,10){\put(0,20){\circle*{4}} \put(20,0){\circle*{4}}
\put(50,0){\circle*{4}} \put(70,20){\circle*{4}}
\put(20,60){\circle*{4}} \put(50,60){\circle*{4}}
\put(0,20){\line(1,0){70}} \put(0,20){\line(5,-2){50}}
\put(20,0){\line(5,2){50}} \put(20,0){\line(1,0){30}}
\put(20,60){\line(-1,-2){20}} \put(20,60){\line(0,-1){60}}
\put(20,60){\line(1,-2){30}} \put(20,60){\line(5,-4){50}}
\put(50,60){\line(-5,-4){50}} \put(50,60){\line(-1,-2){30}}
\put(50,60){\line(0,-1){60}} \put(50,60){\line(1,-2){20}}
\textcolor{blue}{\thicklines \put(20,60){\line(-1,-2){20}}
\put(20,60){\line(0,-1){60}} \put(50,60){\line(0,-1){60}}
\put(50,60){\line(1,-2){20}} \put(0,20){\line(1,0){70}}
\put(20,0){\line(1,0){30}} } \put(-20,55){$u_1=u_2$}
\put(7,0){$v_1$} \put(-10,25){$v_2$} }

\put(220,10){\put(0,20){\circle*{4}} \put(20,0){\circle*{4}}
\put(50,0){\circle*{4}} \put(70,20){\circle*{4}}
\put(20,60){\circle*{4}} \put(50,60){\circle*{4}}
\put(0,20){\line(1,0){70}} \put(0,20){\line(5,-2){50}}
\put(20,0){\line(5,2){50}} \put(20,0){\line(1,0){30}}
\put(20,60){\line(-1,-2){20}} \put(20,60){\line(0,-1){60}}
\put(20,60){\line(1,-2){30}} \put(20,60){\line(5,-4){50}}
\put(50,60){\line(-5,-4){50}} \put(50,60){\line(-1,-2){30}}
\put(50,60){\line(0,-1){60}} \put(50,60){\line(1,-2){20}}
\textcolor{blue}{\thicklines \put(20,60){\line(0,-1){60}}
\put(20,60){\line(5,-4){50}} \put(50,60){\line(-5,-4){50}}
\put(50,60){\line(0,-1){60}} \put(0,20){\line(1,0){70}}
\put(20,0){\line(1,0){30}} } \put(-20,55){$u_1=u_2$}
\put(7,0){$v_1$} \put(70,25){$v_2$} }

\put(320,10){\put(0,20){\circle*{4}} \put(20,0){\circle*{4}}
\put(50,0){\circle*{4}} \put(70,20){\circle*{4}}
\put(20,60){\circle*{4}} \put(50,60){\circle*{4}}
\put(0,20){\line(1,0){70}} \put(0,20){\line(5,-2){50}}
\put(20,0){\line(5,2){50}} \put(20,0){\line(1,0){30}}
\put(20,60){\line(-1,-2){20}} \put(20,60){\line(0,-1){60}}
\put(20,60){\line(1,-2){30}} \put(20,60){\line(5,-4){50}}
\put(50,60){\line(-5,-4){50}} \put(50,60){\line(-1,-2){30}}
\put(50,60){\line(0,-1){60}} \put(50,60){\line(1,-2){20}}
\textcolor{blue}{\thicklines \put(20,60){\line(-1,-2){20}}
\put(20,60){\line(0,-1){60}} \put(50,60){\line(0,-1){60}}
\put(50,60){\line(1,-2){20}} \put(0,20){\line(1,0){70}}
\put(20,0){\line(1,0){30}} } \put(5,55){$u_1$} \put(55,55){$u_2$}
\put(-10,25){$v_1$} \put(70,25){$v_2$} }
\end{picture}

\small Figure 9. Hamilton cycles of $K_{2,2,2}$ passing through two
edges $u_1v_1,u_2v_2$.
\end{center}

Let now $C$ be a Hamilton cycle of $G$. Note that $C$ has at most
eight $2$-chords, eight $3$-chords and four $4$-chords. If $C$ has
three $2$-chords, then two of them together with $C$ form a
$C_4$-free subgraph of $G$ with $10$ edges. If $C$ has three
$4$-chords, then two of them together $C$ form a $C_4$-free subgraph
of $G$ with $10$ edges. Therefore, $G$ has at most two $2$-chords
and at most two $4$-chords. In addition, since $e(G)=18$, $C$ has at
least six $3$-chords. Thus $C$ together with six $3$-chords form a
subgraph of $K_{4,4}$ with $14$ edges. Note there are two
non-isomorphic subgraph of $K_{4,4}$ with $14$ edges, each of which
has a $\varTheta_{3,3,3}$-subgraph (see Figure~10). Let $F$ be a
$\varTheta_{3,3,3}$-subgraph of $G$ and $e=v_1v_2$ be an arbitrary
edge in $E(G)\setminus E(F)$. If $v_1,v_2$ have distance in $F$ not
equal to $3$, then the subgraph of $G$ obtained from $F$ by adding
the edge $e$ is a $C_4$-free graph with $10$ edges. It follows that
every edge in $E(G)\setminus E(F)$ has its two end-vertices with
distance $3$ in $F$. Note that there are exactly $7$ pairs of
vertices with distance $3$ in $F$. Thus we have $e(G)\leq 9+7<18$, a
contradiction.

\begin{center}
\begin{picture}(200,70)
\put(20,10){\put(0,5){\circle*{4}} \put(0,45){\circle*{4}}
\put(20,5){\circle*{4}} \put(20,45){\circle*{4}}
\put(40,5){\circle*{4}} \put(40,45){\circle*{4}}
\put(60,5){\circle*{4}} \put(60,45){\circle*{4}}
\put(0,5){\line(1,1){40}} \put(0,5){\line(3,2){60}}
\put(20,5){\line(-1,2){20}} \put(20,5){\line(1,2){20}}
\put(20,5){\line(1,1){40}} \put(40,5){\line(-1,1){40}}
\put(40,5){\line(-1,2){20}} \put(40,5){\line(0,1){40}}
\put(40,5){\line(1,2){20}} \put(60,5){\line(-3,2){60}}
\put(60,5){\line(-1,1){40}} \put(60,5){\line(-1,2){20}}
\put(60,5){\line(0,1){40}} \textcolor{blue}{\thicklines
\put(0,5){\line(1,2){20}} \put(0,5){\line(3,2){60}}
\put(20,5){\line(-1,2){20}} \put(20,5){\line(1,1){40}}
\put(40,5){\line(0,1){40}} \put(40,5){\line(1,2){20}}
\put(60,5){\line(-3,2){60}} \put(60,5){\line(-1,1){40}}
\put(60,5){\line(-1,2){20}} } \put(-2.5,-2.5){$u_1$}
\put(-2.5,48.5){$v_1$} \put(17.5,-2.5){$u_2$} \put(17.5,48.5){$v_2$}
}

\put(120,10){\put(0,5){\circle*{4}} \put(0,45){\circle*{4}}
\put(20,5){\circle*{4}} \put(20,45){\circle*{4}}
\put(40,5){\circle*{4}} \put(40,45){\circle*{4}}
\put(60,5){\circle*{4}} \put(60,45){\circle*{4}}
\put(0,5){\line(1,1){40}} \put(0,5){\line(3,2){60}}
\put(20,5){\line(-1,2){20}} \put(20,5){\line(0,1){40}}
\put(20,5){\line(1,2){20}} \put(20,5){\line(1,1){40}}
\put(40,5){\line(-1,1){40}} \put(40,5){\line(-1,2){20}}
\put(40,5){\line(0,1){40}} \put(40,5){\line(1,2){20}}
\put(60,5){\line(-3,2){60}} \put(60,5){\line(-1,1){40}}
\put(60,5){\line(-1,2){20}} \put(60,5){\line(0,1){40}}
\textcolor{blue}{\thicklines \put(0,5){\line(1,1){40}}
\put(0,5){\line(3,2){60}} \put(20,5){\line(0,1){40}}
\put(20,5){\line(1,1){40}} \put(40,5){\line(-1,1){40}}
\put(40,5){\line(1,2){20}} \put(60,5){\line(-3,2){60}}
\put(60,5){\line(-1,1){40}} \put(60,5){\line(-1,2){20}} }
\put(-27,-2.5){$u_1=u_2$} \put(-2.5,48.5){$v_1$}
\put(17.5,48.5){$v_2$} }
\end{picture}

\small Figure~10. Graphs obtained from $K_{4,4}$ by removing two
edges $u_1v_1,u_2v_2$.
\end{center}

\underline{Case C. $e(G)=16$ or 17}. In this case we need find a
$C_4$-free subgraphs with $9$ edges. We only deal with the case
$e(G)=16$. Recall that $G$ is 2-connected and $\delta(G)\geq 3$.
Thus $G$ has a cycle of order at least 6. Let $C$ be a longest cycle
of $G$.

If $C$ is a Hamilton cycle, then $C$ has neither $2$-chord nor
$4$-chord, for otherwise $C$ with such a chord form a $C_4$-free
subgraph of $G$ with $9$ edges. Note that $C$ has at most eight
$3$-chords and $e(G)=16$, which implies that $C$ has all $3$-chords,
i.e., $G=K_{4,4}$. Thus $G$ contains a $\varTheta_{3,3,3}$, which is
a $C_4$-free subgraph of $G$ with $9$ edges.

Now consider the case $C=C_7$. Recall that $d_G(v)\ge 3$ for all
$v\in V(G)$. The vertex outside $C$ has three neighbors in $C$. It
follows that $C$ has a $1$-bridge or $3$-bridge. Thus $C$ together
with such a bridge form a $C_4$-free subgraph of $G$ with $9$ edges.

Finally we assume that $C=C_6$. Let $\{v_1,v_2\}=V(G)\setminus
V(C)$. Assume first that $v_1v_2\in E(G)$. Since $d_G(v)\ge 3$ for
all $v\in V(G)$, there are two distinct vertices $x_1,x_2\in V(C)$
such that $v_1x_1, v_2x_2\in E(G)$. If $x_1,x_2$ has distance $1$ or
$2$ in $C$, then $G$ has a cycle longer than $C$, a contradiction.
Thus, the distance between $x_1$ and $x_2$ in $C$ has to be $3$. In
this case, $C$ together with edges $v_1v_2, v_1x_1,v_2x_2$ form a
$\varTheta_{3,3,3}$ which is a required subgraph of $G$. {Next,
assume that $v_1v_2\notin E(G)$. For $i\in[1,2]$, $|N_G(v_i)\cap
V(C)|=3$ since $d_G(v_i)\ge 3$ and $C$ is a longest cycle of $G$.}
If $C$ has a $2$-chord, then $G$ contains a $\varTheta_{1,2,4}$. By
adding two edges incident to $v_1,v_2$, respectively, we can get a
$C_4$-free subgraph of $G$ with $9$ edges. Thus $C$ has no
$2$-chord. Note that $C$ has at most three $3$-chords. We have
$e(G)\leq 6+3+3+3<16$, a contradiction.
\end{proof}

\section{Concluding Remarks}\label{Sec-re.}
In this paper, we have studied the Tur\'an problem of directed
paths, directed cycles and all orientations of $C_4$. In particular,
for directed cycles, our results are exhaustive. Compared with the
Tur\'an problem of cycle, especially even cycles, the Tur\'an
problem of directed cycles seems to be simpler.

However, compared with the Tur\'an problem of paths in graph, we
find that the Tur\'an problem of directed paths is more complex,
because the Tur\'an number of $\overrightarrow{P_k}$ is a piecewise
function of $n$ and $k$. In extremal graph theory, this is a very
interesting difference between graphs and digraphs. In deed, for
$k+1\leq n\leq 2k-1$, we have ${\rm
ex}(n,\overrightarrow{P_{k+1}})\geq(k-1)(n-1)$ by the following
construction: Let $D$ be the digraph that (i) $V(D)$ has a partition
$\{X,Y\}$ with $|X|=k-1$, $|Y|=n-k+1$; (ii) $D[X]$ is complete and
$D[Y]$ is empty; (iii) either $X\mapsto Y$ or $Y\mapsto X$ in $D$.
It easy to verify that every $D$ is $\overrightarrow{P_{k+1}}$-free
and $a(D)=(k-1)(n-1)$.

In addition, when $2k\le n\le 3k-2$, we have the following
construction, which shows that $\overrightarrow{T_{n,k}}$ is not the
unique extremal digraph for $\overrightarrow{P_{k+1}}$ (if it is).
Let $D$ be the digraph that: (i) $V(D)$ has a partition
$\{V_1,\ldots,V_t\}$ such that $|V_i|=3$ or is even for $i\in[1,t]$;
(ii) there are exactly $n-2k$ parts of size 3; (iii) $D[V_i]$ is
empty if $|V_i|=3$, and
$D[V_i]=\overleftrightarrow{2K_{\frac{|V_i|}{2}}}$ otherwise; (iv)
$V_i\rightarrow V_j$ for $1\leq i<j\leq t$. Clearly, $D$ is
$\overrightarrow{P_{k+1}}$-free and
$a(D)=a(\overrightarrow{T_{n,k}})$. Therefore, it is extremely
difficult to determine the structure of $P_{k+1}$-free extremal
digraphs for small $n$.

By Theorem \ref{3-path}, we already know that every
$\overrightarrow{P_3}$-free extremal digraph $D$ belongs to
$\overleftrightarrow{\mathcal{T}_{n,2}}$ for $n\ge 5$. In
particular, for $\overrightarrow{P_4}$, we can deduce the following
result.

\begin{theorem}\label{p4}
For all $n\ge 9$,
\[
{\rm ex}(n,\overrightarrow{P_4})=a(\overrightarrow{T_{n,3}})=
\left\lfloor\frac{n^2}{3}\right\rfloor.
\]
Furthermore,
${\rm{EX}}(n,\overrightarrow{P_4})=\overrightarrow{\mathcal{T}_{n,3}}$.
\end{theorem}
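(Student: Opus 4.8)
The plan is to prove Theorem~\ref{p4} by induction on $n\ge 9$, running the proof of Theorem~\ref{P_k} in the case $k=3$ and supplying an \emph{ad hoc} base case $n=9$ (the obstruction being that Theorem~\ref{K^s_k-free} produces an induced $\overrightarrow{T_{9,3}}$ only for $n$ beyond an ineffective threshold). For the lower bound, $\overrightarrow{T_{n,3}}$ is $\overrightarrow{P_4}$-free since a directed path in it meets each of $V_1,V_2,V_3$ at most once and so has at most three vertices, and $a(\overrightarrow{T_{n,3}})=e(T_{n,3})=\lfloor n^2/3\rfloor$ by Theorem~\ref{K_k-free}; so it suffices to show that every $\overrightarrow{P_4}$-free digraph $D$ on $n$ vertices has $a(D)\le\lfloor n^2/3\rfloor$, with equality forcing $D\in\overrightarrow{\mathcal{T}_{n,3}}$. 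Write $s(n)=\lfloor n^2/3\rfloor$; a residue-by-residue check gives $s(n)-s(n-1)=\lfloor 2s(n)/n\rfloor=:\delta_n$ and $n(\delta_n+1)>2s(n)+2$ for all $n\ge 9$. Granting the theorem for $n-1$, no digraph on $n$ vertices with $s(n)+1$ arcs is $\overrightarrow{P_4}$-free: otherwise each vertex has degree $\ge\delta_n+1$ (deleting a smaller-degree vertex leaves $\ge s(n-1)+1$ arcs on $n-1$ vertices, hence a $\overrightarrow{P_4}$ by induction), whence $2a(D)=\sum_v d_D(v)\ge n(\delta_n+1)>2s(n)+2$. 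Thus $\mathrm{ex}(n,\overrightarrow{P_4})=s(n)$.

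Now let $D\in\mathrm{EX}(n,\overrightarrow{P_4})$. Pick $v_0$ with $d_D(v_0)\le\lfloor 2s(n)/n\rfloor=\delta_n$, so $D':=D-v_0$ has $\ge s(n)-\delta_n=s(n-1)$ arcs; by induction $D'\in\overrightarrow{\mathcal{T}_{n-1,3}}$ with parts $V_1\mapsto V_2\mapsto V_3$, $V_1\mapsto V_3$, each $|V_i|\ge 3$, and $d_D(v_0)=\delta_n$ exactly. If $v_0$ were adjacent (in $\mathrm{Umg}$) to all three parts, then $v_0\to u$ with $u\in V_1$ would yield a $\overrightarrow{P_4}$ $v_0\,u\,b\,c$ ($b\in V_2$, $c\in V_3$), so $N_D^+(v_0)\cap V_1=\emptyset$; dually $N_D^-(v_0)\cap V_3=\emptyset$; and then a neighbour $a_2\in V_2$ of $v_0$ would give a $\overrightarrow{P_4}$ $a_1'\,a_2\,v_0\,a_3$ (if $a_2\to v_0$) or $a_1\,v_0\,a_2\,c$ (if $v_0\to a_2$), a contradiction. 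Since $\delta_n>\max_i|V_i|$, $v_0$ is adjacent to exactly two parts. The three cases are handled identically: the same path-avoidance arguments show that all arcs between $v_0$ and the ``earlier'' part point into $v_0$, all arcs between $v_0$ and the ``later'' part point out of $v_0$, and $v_0$ has no other arcs; meanwhile $d_D(v_0)=\delta_n$ forces the omitted part to have minimum possible size and $v_0$ to be completely joined to the other two, exhibiting $D$ as a member of $\overrightarrow{\mathcal{T}_{n,3}}$.

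It remains to settle the base case $n=9$, which I expect to be the main difficulty. One first observes that a $\overrightarrow{P_4}$-free digraph has no directed cycle of length $\ge 4$, and that if it contains a directed triangle on $\{w_1,w_2,w_3\}$ then appending the two triangle-arcs before any arc leaving the triangle (resp.\ after any arc entering it) produces a $\overrightarrow{P_4}$, so the triangle spans a component on exactly three vertices; by Corollary~\ref{P_k-upper-bound} this would force $a(D)\le 6+\mathrm{ex}(6,\overrightarrow{P_4})\le 6+\mathrm{ex}(6,K_4)+\mathrm{ex}(6,P_4)=6+12+6=24<27$. Hence $D$ has no directed triangle, so the set of its digon edges induces a triangle-free $P_4$-free graph (a $P_4$ among digon edges gives a $\overrightarrow{P_4}$), i.e.\ a star forest. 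The crux is then to rule out digons entirely: given a digon $uv$, repeated path-avoidance forces every out-neighbour of $u$ other than $v$ to have out-degree $1$ and every in-neighbour to have in-degree $1$ (symmetrically for $v$), together with dichotomies such as ``$|N^-_D(v)\setminus\{u\}|\ge 2\Rightarrow d^+_D(u)=1$''; feeding these, along with $D$ being $\overrightarrow{P_4}$-free on the remaining vertices (Corollary~\ref{P_k-upper-bound}), into a finite case analysis on the sizes of the various neighbourhood classes pushes $a(D)$ below $27$. Once $D$ is digon-free, $G:=\mathrm{Ung}(D)=\mathrm{Umg}(D)$ is $K_4$-free (by Theorem~\ref{H-path}, a $K_4$ in $G$ carries a directed Hamilton path, hence a $\overrightarrow{P_4}$) on $9$ vertices with $a(D)\ge 27=e(T_{9,3})$ edges, so $G=T_{9,3}=K_{3,3,3}$ and $a(D)=27$. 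Finally, a $\overrightarrow{P_4}$-free orientation of $K_{3,3,3}$ has no directed triangle (such a triangle would be a $3$-vertex component of a connected $9$-vertex graph), hence is acyclic, and then the orientation analysis from the proof of Claim~\ref{inducedsub} — run on a transversal and propagated to the three parts — shows the parts are linearly ordered, i.e.\ $D\in\overrightarrow{\mathcal{T}_{9,3}}$. This completes the induction, and with it the proof.
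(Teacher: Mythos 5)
Your strategy is genuinely different from the paper's, and considerably more laborious. The paper's entire proof of this theorem is a three-line reduction: since $\overrightarrow{P_4}$ is a subdigraph of $\overrightarrow{P_{1,3}}$ (the oriented $C_4$ consisting of an arc $a\to b$ and a directed path $a\to c\to d\to b$), every $\overrightarrow{P_4}$-free digraph is $\overrightarrow{P_{1,3}}$-free, so ${\rm ex}(n,\overrightarrow{P_4})\le{\rm ex}(n,\overrightarrow{P_{1,3}})=\lfloor n^2/3\rfloor$ for $n\ge 9$ by Theorem~\ref{p13}; the matching lower bound comes from $\overrightarrow{T_{n,3}}$; and any extremal digraph is therefore $\overrightarrow{P_{1,3}}$-extremal, hence lies in $\overrightarrow{\mathcal{S}_n}$, and a digraph $\overrightarrow{S_n}$ is $\overrightarrow{P_4}$-free precisely when it is a transitive orientation $\overrightarrow{T_{n,3}}$. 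You do not use Theorem~\ref{p13} at all; instead you re-run the induction scheme of Theorem~\ref{P_k} from scratch in the case $k=3$, supplying an explicit base case at $n=9$ to replace the ineffective Erd\H{o}s--Stone threshold. That is a legitimate alternative route and would yield a self-contained proof, but it costs you a full structural argument at $n=9$ that the paper sidesteps (or rather has already paid for, once, inside the proof of Theorem~\ref{p13}).

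The genuine gap is in your base case, exactly where you flag the main difficulty. The reductions to ``no directed cycle of length $\ge 4$'', ``directed triangles span $3$-vertex components'' (hence $a(D)\le 6+{\rm ex}(6,\overrightarrow{P_4})\le 24$), and ``digon edges form a star forest'' are all sound. But the step that \emph{eliminates digons entirely} is only gestured at: you state a few local degree constraints (``every out-neighbour of $u$ other than $v$ has out-degree $\le 1$'', the dichotomy ``$|N^-_D(v)\setminus\{u\}|\ge 2\Rightarrow d^+_D(u)=1$'') and then assert that ``a finite case analysis on the sizes of the various neighbourhood classes pushes $a(D)$ below $27$.'' That case analysis is the heart of the base case and is not carried out; as written this is a plan, not a proof. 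Everything downstream (from ``Once $D$ is digon-free'' onward, including ${\rm Ung}(D)=K_{3,3,3}$ and the transitive-orientation argument via Claim~\ref{inducedsub}) is fine, but it depends on the unproven digon exclusion. The cleanest fix is simply to invoke Theorem~\ref{p13} at $n=9$ as the paper does, which immediately gives $a(D)\le 27$ and (upon equality) $D\in\overrightarrow{\mathcal{S}_9}$, after which your final paragraph closes the argument. Your inductive step for $n>9$ is correct and would then be redundant, but only because the paper's reduction already handles all $n\ge 9$ uniformly.
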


\begin{proof}
On one hand, due to $\overrightarrow{T_{n,3}}$ is
$\overrightarrow{P_4}$-free, $a(\overrightarrow{T_{n,3}})\le {\rm
ex}(n,\overrightarrow{P_4})$; on the other hand,
since $\overrightarrow{P_4}\subset \overrightarrow{P_{1,3}}$ and
${\rm ex}(n,\overrightarrow{P_{1,3}})=a(\overrightarrow{T_{n,3}})$ for $n\ge 9$, so we have ${\rm
ex}(n,\overrightarrow{P_4})=a(\overrightarrow{T_{n,3}})$ for $n\ge
9$. Furthermore, if $D$ is a $\overrightarrow{P_4}$-free digraph of
order $n$ with $a(D)=a(\overrightarrow{T_{n,3}})$, then $D$ is also
$\overrightarrow{P_{1,3}}$-free. This implies that $D\in{\rm
EX}(n,\overrightarrow{P_{1,3}})=\overrightarrow{\mathcal{S}_n}$.
Clearly a digraph $\overrightarrow{S_n}$ is
$\overrightarrow{P_4}$-free if and only if it is in
$\overrightarrow{\mathcal{T}_{n,3}}$. we have that
${\rm{EX}}(n,\overrightarrow{P_4})=\overrightarrow{\mathcal{T}_{n,3}}$
for $n\geq 9$.
\end{proof}

By Theorem~\ref{3-path} and Theorem~\ref{p4}, we believe that each
$\overrightarrow{P_{k+1}}$-free extremal digraph belongs to
$\overrightarrow{\mathcal{T}_{n,k}}$ for $n\ge 3k$, and put forward
the following conjecture.

\begin{conjecture}
For every $k,n\in \mathbb{N}^*$ with $n\ge 3k$,
\[
{\rm ex} (n,\overrightarrow{P_{k+1}})=a(\overrightarrow{T_{n,k}}),
\]
and ${\rm
EX}(n,\overrightarrow{P_{k+1}})=\overrightarrow{\mathcal{T}_{n,k}}$.
\end{conjecture}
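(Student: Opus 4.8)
The plan is to prove the conjecture by induction on $n\ge 3k$. The case $k=1$ is trivial and $k\in\{2,3\}$ are covered by Theorems~\ref{3-path} and~\ref{p4} (whose ranges $n\ge 5$ and $n\ge 9$ contain $n\ge 3k$), so one may assume $k\ge 4$; the base case is then $n=3k$. Since $\overrightarrow{T_{n,k}}$ is $\overrightarrow{P_{k+1}}$-free we always have ${\rm ex}(n,\overrightarrow{P_{k+1}})\ge a(\overrightarrow{T_{n,k}})$, so only the matching upper bound and the structure statement need to be proved. Write $n=qk+r$ with $0\le r<k$; as $n\ge 3k$ we have $q\ge 3$. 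Two elementary facts drive everything: $a(\overrightarrow{T_{n,k}})-a(\overrightarrow{T_{n-1,k}})$ equals the minimum degree $\delta(T_{n,k})$ of the Tur\'an graph (deleting a vertex of a largest part of $T_{n,k}$ leaves $T_{n-1,k}$), and the inequality $n\big(\delta(T_{n,k})+1\big)>2\big(a(\overrightarrow{T_{n,k}})+1\big)$, whose left minus right side equals $r(q+1)-2$ when $r\ge1$ and $n-2$ when $r=0$ — positive since $q\ge3$. One also checks $\lfloor 2a(\overrightarrow{T_{n,k}})/n\rfloor=\delta(T_{n,k})$, because the average degree of $T_{n,k}$ lies in $[\delta(T_{n,k}),\delta(T_{n,k})+1)$.

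\smallskip
\emph{Inductive step ($n\ge 3k+1$).} Let $D$ be extremal for $\overrightarrow{P_{k+1}}$ on $n$ vertices. Averaging produces a vertex $v$ with $d_D(v)\le\lfloor 2a(D)/n\rfloor$; since $D-v$ is $\overrightarrow{P_{k+1}}$-free on $n-1\ge 3k$ vertices, the induction hypothesis gives $a(D-v)\le a(\overrightarrow{T_{n-1,k}})$, and the estimate $a(D)\le \tfrac{n}{n-2}\big(a(\overrightarrow{T_{n,k}})-\delta(T_{n,k})\big)$ that results, combined with the displayed inequality, forces $a(D)=a(\overrightarrow{T_{n,k}})$, then $d_D(v)=\delta(T_{n,k})$, then $D-v\in\overrightarrow{\mathcal{T}_{n-1,k}}$. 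Now write the transitive parts of $D-v$ as $V_1\mapsto\cdots\mapsto V_k$, each of size $\ge 3$. If $v$ had an in-neighbour in $V_j$ and an out-neighbour in $V_i$ with $j\ge i-1$, splicing a longest directed path up to $V_j$, through $v$, and out from $V_i$ produces a directed $(k+1)$-path; hence the in-neighbours of $v$ occupy an initial block $V_1,\dots,V_{j^\ast}$, the out-neighbours a terminal block $V_{i^\ast},\dots,V_k$ with $i^\ast\ge j^\ast+2$, and $v$ is non-adjacent to every part strictly between. Extremality upgrades this to full adjacency: adding any absent arc between $v$ and a vertex of $V_\ell$ with $\ell\le j^\ast$ (resp.\ $\ell\ge i^\ast$) creates no directed $(k+1)$-path, by the same splicing bound, contradicting maximality of $a(D)$; so $V_\ell\to v$ for all $\ell\le j^\ast$ and $v\to V_\ell$ for all $\ell\ge i^\ast$. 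Finally $d_D(v)=\delta(T_{n,k})$ says the missing block $V_{j^\ast+1},\dots,V_{i^\ast-1}$ has $\lceil n/k\rceil-1$ vertices, which is smaller than twice the size of any part of $D-v$; since $q\ge3$ this forces the missing block to be a single part, and inserting $v$ into it exhibits $D$ as a member of $\overrightarrow{\mathcal{T}_{n,k}}$. (If $v$ has only in-neighbours, read $i^\ast=k+1$ and the same argument places $v$ in the last part; only out-neighbours is symmetric, and $v$ cannot be isolated since $\delta(T_{n,k})>0$.)

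\smallskip
\emph{Base case $n=3k$.} This is where the difficulty is concentrated, because $n=3k$ is precisely the point at which the Erd\H{o}s--Stone-type argument of Theorem~\ref{P_k} fails: $e(T_{3k,k})-e(T_{3k,k-1})$ is only a constant, so one cannot extract a large complete multipartite subgraph from ${\rm Ung}(D)$. I would argue directly. Let $D$ be $\overrightarrow{P_{k+1}}$-free on $3k$ vertices, put $G={\rm Ung}(D)$ (which is $K_{k+1}$-free by R\'edei's theorem), and let $t$ be the number of $\overrightarrow{C_2}$'s, so $a(D)=e(G)+t$; the subgraph of $G$ formed by the $2$-cycles is $P_{k+1}$-free, so $t\le{\rm ex}(3k,P_{k+1})=3\binom{k}{2}$ by Theorem~\ref{E-G-path}. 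The ingredient to be supplied is the stability statement that every $\overrightarrow{P_{k+1}}$-free digraph on $3k$ vertices containing a $\overrightarrow{C_2}$ has at most $e(T_{3k,k})-1$ arcs. Granting it, an extremal $D$ has $t=0$, hence $a(D)=e(G)\le e(T_{3k,k})$ (establishing the Tur\'an number), and equality forces $G=T_{3k,k}$ by the uniqueness part of Tur\'an's theorem; thus $D$ is a $\overrightarrow{P_{k+1}}$-free orientation of $T_{3k,k}$. A concluding lemma then finishes the base case: a $\overrightarrow{P_{k+1}}$-free orientation of $T_{n,k}$ with $n\ge 3k$ is transitive, hence lies in $\overrightarrow{\mathcal{T}_{n,k}}$. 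I would prove this lemma via Gallai--Roy — a maximal acyclic spanning subdigraph gives a proper $k$-colouring of $T_{n,k}$ whose colour classes must coincide with the parts — and then rule out a backward arc by extending, through the size-$\ge 3$ parts, the directed cycle it would complete into a directed $(k+1)$-path.

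\smallskip
\emph{Main obstacle.} The crux is the boxed stability statement for $n=3k$. The intended mechanism is that a $\overrightarrow{C_2}$ on $\{x,y\}$ sharply limits $d_D(x)+d_D(y)$: splicing directed paths through the two arcs of the $2$-cycle caps, for each of $x$ and $y$, the combined length of the longest in- and out-directed paths avoiding the other vertex, which should force a degree deficit at $\{x,y\}$ exceeding the extra $3\binom{k}{2}$ that $t$ two-cycles elsewhere could buy. Turning this heuristic into a proof that handles every arrangement of $2$-cycles at once — not just a single isolated one — is the step I do not yet see how to carry out cleanly; the remaining parts (the inductive step and the transitivity lemma) are comparatively routine.
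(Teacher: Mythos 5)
This statement is posed in the paper as a conjecture, with no proof offered by the authors (they prove the result only for $n\ge n_0(k)$ via the Erd\H{o}s--Stone machinery of Theorem~\ref{P_k}, and for $k\le 3$ via Theorems~\ref{3-path} and~\ref{p4}). So the only question is whether your argument stands on its own, and by your own admission it does not: the base case $n=3k$ is missing. Your inductive step is essentially sound — I checked the arithmetic ($n(\delta(T_{n,k})+1)-2(a(\overrightarrow{T_{n,k}})+1)=r(q+1)-2$ for $r\ge 1$ and $n-2$ for $r=0$, both positive for $q\ge 3$), and the vertex-deletion-plus-splicing argument does reduce the conjecture to the single value $n=3k$ for each $k$, which is a genuinely useful observation. (Two small repairs are needed there: the splicing that forces $i^\ast\ge j^\ast+2$ breaks down when the only witnessing in- and out-neighbour of $v$ coincide, i.e.\ when the obstruction is a $\overrightarrow{C_2}$ at $v$; and the claim that the non-neighbourhood of $v$ is a union of \emph{consecutive} parts needs the full-adjacency step stated carefully. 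Both are fixable by degree counting against $\delta(T_{n,k})$, but as written they are gaps.)

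The real hole is the boxed stability statement — that every $\overrightarrow{P_{k+1}}$-free digraph on $3k$ vertices containing a $\overrightarrow{C_2}$ has fewer than $e(T_{3k,k})$ arcs — together with the concluding transitivity lemma for orientations of $T_{3k,k}$. Neither is proved. The heuristic you give (a $2$-cycle at $\{x,y\}$ caps the in/out path lengths through $x$ and $y$ and hence their degrees) is exactly the kind of local argument that fails to control configurations where many $2$-cycles share vertices or where the degree deficit at one $2$-cycle is compensated globally; you say as much yourself. The transitivity lemma is also not routine at part size $3$: the paper's analogous argument (inside Claim~\ref{inducedsub}) relies on parts of size $k$ to splice around any backward arc, and your Gallai--Roy sketch only yields a proper $k$-colouring plus a short directed cycle, not yet a $\overrightarrow{P_{k+1}}$. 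Since precisely this regime ($n$ linear in $k$) is why the statement is a conjecture rather than a theorem, the proposal cannot be accepted as a proof; it is an honest partial reduction.
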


Furthermore, by Observation~\ref{anti-lemma}, it would be
interesting to study the Tur\'an problem of anti-directed cycles in
digraphs. It is well known that the Tur\'an problems of even cycles
in graphs or bipartite graphs have always been difficult problems in
extremal graph theory. For ${\rm ex}(n,n;C_{2k})$ with $k\ge 2$, it
is still a major open question to determine this value,
see~\cite{Robert2016The, verstraete2016extremal}. In particular, for
${\rm ex}(n,n; C_4)$, this problem is also a special case of
Zarankiewicz problem. The classical Zarankiewicz problem~\cite{Zar}
asks for the maximum number of edges in a bipartite graph that has a
given number of vertices but has no complete bipartite subgraphs of
a given size. Reiman in 1958~\cite{reiman1958problem} showed that if
$n=q^2+q+1$ where $q$ is the order of a projective plane (in
particular $q$ is a prime power), then ${\rm
ex}(n,n;C_4)=(q^2+q+1)(q+1)$. So far, for general $n$, the precise
value of ${\rm ex}(n,n;C_4)$ is still an open problem.

The study of Tur\'an problem of anti-directed cycles may provide a
good way to solve the Tur\'an problems of even cycles in bipartite
graphs. By Theorem~\ref{anticycle}, we have already got that ${\rm
ex}(n,\overrightarrow{C_4^a})={\rm ex}(n,n;C_4)$ for $n\ge 3$.
Therefore, we put forward the following conjecture.

\begin{conjecture}
For every $k\in \mathbb{N}^*$, there exists a constant $n_0$ such
that for $n\ge n_0$,
\[
{\rm ex} (n,\overrightarrow{C_{2k}^a})={\rm ex}(n,n;C_{2k}),
\]
and ${\rm EX}(n,\overrightarrow{C_{2k}^a})=\{D: {\rm Brp}(D)\in{\rm
EX}(n,n;C_{2k})\}$.
\end{conjecture}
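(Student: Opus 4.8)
The plan is to follow the template of the proof of Theorem~\ref{anticycle}, establishing the equality and the extremal description separately. The inequality ${\rm ex}(n,\overrightarrow{C_{2k}^a})\le{\rm ex}(n,n;C_{2k})$ is already recorded in Observation~\ref{anti-lemma}, so the substance is the matching lower bound together with the structure of the extremal digraphs. For the lower bound I would start from an arbitrary $B=(X,Y;E)\in{\rm EX}(n,n;C_{2k})$ and aim to show that, for $n$ large in terms of $k$, the quasi-complement $\widehat B$ has a perfect matching. Given this, label $X=\{x_i\}$, $Y=\{y_i\}$ so that $\{x_iy_i:i\in[1,n]\}$ is a perfect matching of $\widehat B$; then the diagonal of $B$ is empty, so ${\rm Drp}(B)$ is a genuine digraph with $a({\rm Drp}(B))=e(B)={\rm ex}(n,n;C_{2k})$ and ${\rm Brp}({\rm Drp}(B))=B$. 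Since $B$ contains no $C_{2k}$ at all, ${\rm Drp}(B)$ contains no $\overrightarrow{C_{2k}^a}$, so ${\rm ex}(n,\overrightarrow{C_{2k}^a})\ge{\rm ex}(n,n;C_{2k})$, and with the Observation this yields the claimed equality for $n\ge n_0(k)$.

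The perfect-matching step is the core of the argument and is where ``$n$ large'' is used. If $\widehat B$ has no perfect matching, Hall's theorem (Theorem~\ref{Hall}) yields an inclusion-minimal $U\subseteq X$ with $|N_{\widehat B}(U)|=|U|-1$; setting $W=Y\setminus N_{\widehat B}(U)$, every vertex of $W$ is $B$-adjacent to every vertex of $U$, so $K_{U,W}\subseteq B$, forcing $\min(|U|,|W|)\le k-1$ (otherwise $K_{k,k}\supseteq C_{2k}$ sits in $B$). Hence, after possibly interchanging $X$ and $Y$, $B$ contains a set $S$ of at most $k-1$ vertices on one side complete to a set $T$ of at least $n-k+2$ vertices on the other, with $|S|+|T|=n+1$. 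I would then show $e(B)=O_k(n)$, which contradicts $e(B)={\rm ex}(n,n;C_{2k})$ because ${\rm ex}(n,n;C_{2k})$ is superlinear in $n$ (there are $C_{2k}$-free, indeed girth-$>2k$, bipartite graphs with far more than $n$ edges). To bound $e(B)$: writing $m=|S|$, the subgraph $B'$ of $B$ between $T$ and the $n-m$ vertices off $S$ on the same side as $S$ can contain no path longer than $2(k-m)$, since a sub-path of length exactly $2(k-m)$ with both ends in $T$ (available after a one-step shift along any longer path, as $B'$ is bipartite with its large side in $T$), glued to a path of length $2m$ between those two ends routed through $S$ (available since $S$ is complete to $T$ and $|T|$ is huge), would be a $C_{2k}$ in $B$; by the Erd\H{o}s--Gallai theorem $e(B')=O_k(n)$, and the remaining edges of $B$ (those incident to $S$, or to the at most $m-1$ vertices of $S$'s own side not in $T$) number at most $mn+(m-1)n=O_k(n)$. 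So $\widehat B$ does have a perfect matching.

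For the extremal characterisation, one inclusion is immediate: if ${\rm Brp}(D)\in{\rm EX}(n,n;C_{2k})$ then $D$ is $\overrightarrow{C_{2k}^a}$-free with $a(D)={\rm ex}(n,n;C_{2k})={\rm ex}(n,\overrightarrow{C_{2k}^a})$, so $D$ is extremal, and such $D$ exist by the matching argument. The reverse inclusion is the main obstacle and is genuinely new for $k\ge 3$: one must show every $D\in{\rm EX}(n,\overrightarrow{C_{2k}^a})$ has ${\rm Brp}(D)$ itself $C_{2k}$-free, whence, having the right number of edges, it lies in ${\rm EX}(n,n;C_{2k})$. The subtlety is a \emph{degenerate} $C_{2k}$ in ${\rm Brp}(D)$, one using both copies $x_i,y_i$ of a single vertex $v_i$ of $D$: such a cycle corresponds only to a closed anti-directed walk of length $2k$ in $D$ that repeats a vertex, not to an honest $\overrightarrow{C_{2k}^a}$, and is therefore compatible with $D$ being $\overrightarrow{C_{2k}^a}$-free. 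For $k=2$ no degenerate $C_4$ can occur — it would require a loop — which is exactly why Theorem~\ref{anticycle} needs no largeness hypothesis. For $k\ge 3$ I would try to remove such a walk by splitting a repeated vertex: replace one of its two occurrences by a fresh vertex taken from the common in- or out-neighbourhood of its two walk-neighbours, strictly decreasing the number of repetitions and eventually producing an honest $\overrightarrow{C_{2k}^a}$ in $D$, a contradiction. This rerouting exploits the large average co-degrees forced by $a(D)$ being superlinear, but it can stall when those two neighbours have a small common neighbourhood; turning it — or a stability substitute, to the effect that near-extremal $\overrightarrow{C_{2k}^a}$-free digraphs are close to ${\rm Drp}$ of near-extremal $C_{2k}$-free bipartite graphs — into a complete proof is the hard part.
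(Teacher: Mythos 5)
The first thing to note is that this statement is not a theorem of the paper: it is posed as an open conjecture in the concluding section, and the authors prove only the case $k=2$ (Theorem~\ref{anticycle}) together with the two-sided bound of Observation~\ref{anti-lemma}. There is consequently no proof in the paper to compare yours against; you are attempting an open problem, and your own closing sentence concedes that the argument is not complete.

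On the merits of the attempt: the lower-bound half (Hall's theorem applied to the quasi-complement, the complete bipartite graph $K_{U,W}$ forcing $\min(|U|,|W|)\le k-1$, and the Erd\H{o}s--Gallai path bound giving $e(B)=O_k(n)$ against the superlinearity of ${\rm ex}(n,n;C_{2k})$) is a sensible generalization of the $k=2$ proof and looks sound for large $n$; it yields ${\rm ex}(n,\overrightarrow{C_{2k}^a})\ge{\rm ex}(n,n;C_{2k})$. The genuine gap is exactly where you locate it, but it is larger than you acknowledge. Degenerate closed anti-directed walks --- a $C_{2k}$ in ${\rm Brp}(D)$ using both $x_i$ and $y_i$ for some $i$ --- do not only obstruct the reverse inclusion of the extremal characterization; they already obstruct the inequality ${\rm ex}(n,\overrightarrow{C_{2k}^a})\le{\rm ex}(n,n;C_{2k})$ that you import from Observation~\ref{anti-lemma}. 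For $k\ge 3$ a $\overrightarrow{C_{2k}^a}$-free digraph can have a bipartite representation containing such a degenerate $C_{2k}$ (for $k=3$, six arcs on five vertices suffice), so ${\rm Brp}(D)$ need not witness that upper bound, and the correspondence stated before the observation only gives the implication in the other direction; the case $k=2$ is special precisely because a degenerate $C_4$ would require a loop. Your proposed repair --- splitting a repeated vertex by rerouting through a common neighbourhood --- is the natural idea, but as you admit it can stall, and no stability theorem for near-extremal $\overrightarrow{C_{2k}^a}$-free digraphs is available to substitute for it. The statement therefore remains unproven, which is consistent with the authors leaving it as a conjecture.
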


\section*{Acknowledgements}
The authors would like to thank Laihao Ding, Zejun Huang and Miklos Simonovits for helpful discussion on extremal digraph theory when they were visiting
the School of Mathematics at Shandong University.
Part of the work was done while the second author was visiting the School of Mathematics at Shandong University. He also thanks Professor Guanghui Wang for the hospitality him received.

\bibliographystyle{abbrv}

%
%
%

\end{document}